\newtheorem{assumption}{Assumption}
\begin{document}

\title{On the sensitivity of the optimal partition for parametric second-order conic optimization\thanks{This work is supported by Air force Office of Scientific Research (AFOSR) Grant \# FA9550-15-1-0222.}
}

\titlerunning{On the sensitivity of the optimal partition for parametric SOCO}        

\author{Ali Mohammad-Nezhad         \and
        Tam\'as Terlaky 
}

\dedication{This paper is dedicated to Marco Lopez on the occasion of his $70th$ birthday.}


\institute{A. Mohammad-Nezhad \at
              Department of Mathematics, Purdue University, 150 N. University St., West Lafayette IN 47907  \\
              \email{mohamm42@purdue.edu}            \\
              Tel.: +(765)-496-3621\\
           \and
           T. Terlaky \at
              Department of Industrial and Systems Engineering, Lehigh University, 200 W. Packer Ave, Bethlehem, PA 18015 \\
               Tel.: +(610)-758-2903\\
              Fax: +1(610)758-4886\\
              \email{terlaky@lehigh.edu}  
}

\date{Received: date / Accepted: date}

\maketitle

\begin{abstract}
In this paper, using an optimal partition approach, we study the parametric analysis of a second-order conic optimization problem, where the objective function is perturbed along a fixed direction. We characterize the notions of so-called invariancy set and nonlinearity interval, which serve as stability regions of the optimal partition. We then propose, under the strict complementarity condition, an iterative procedure to compute a nonlinearity interval of the optimal partition. Furthermore, under primal and dual nondegeneracy conditions, we show that a boundary point of a nonlinearity interval can be numerically identified from a nonlinear reformulation of the parametric second-order conic optimization problem. Our theoretical results are supported by numerical experiments.
\keywords{Parametric second-order conic optimization \and Optimal partition \and Nonlinearity interval  \and Transition point}
 \subclass{90C31\and 90C22 \and 90C51}
\end{abstract}

\section{Introduction}
In this paper, we investigate the \textit{optimal partition} approach for the parametric analysis of second-order conic optimization (SOCO) problems. Let $\mathcal{L}^{\bar{n}}_+\!:=\!\mathbb{L}^{n_1}_+ \times \mathbb{L}^{n_2}_+ \times \ldots \times \mathbb{L}^{n_p}_+$ be the Cartesian product of $p$ second-order cones~\cite{AG03}, where $\bar{n}:=\sum_{i=1}^p n_i$,
\begin{align*}
\mathbb{L}^{n_i}_+\!:=\!\big\{x^i\!:=\!(x^i_1,\ldots, x^i_{n_i})^T \in \mathbb{R}^{n_i} \mid x^i_1 \ge \|  x^i_{2:n_i}\| \big\}, 
\end{align*}
and $\|.\|$ is the $\ell_2$ norm. Parametric primal-dual SOCO problems are defined as 
\begin{align*}
&(\mathrm{P}_{\epsilon}) \qquad \inf_{x} \big\{(c + \epsilon \bar{c})^T x \mid Ax=b, \ x \in \mathcal{L}^{\bar{n}}_+ \big\}, \\
&(\mathrm{D}_{\epsilon}) \qquad \sup_{(y; s)} \big\{b^T y \mid  A^T y+s=c + \epsilon \bar{c}, \ s \in \mathcal{L}^{\bar{n}}_+\big\},
\end{align*}
in which $x\!:=\!(x^1; \ldots; x^p)$, $s\!:=\!(s^1;\ldots; s^p)$, $b \in \mathbb{R}^m$, $A\!:=\!(A^1, \ldots, A^p) \in \mathbb{R}^{m \times \bar{n}}$, where $A^i \in \mathbb{R}^{m \times n_i}$, $c\!:=\!(c^1; \ldots; c^p) \in \mathbb{R}^{\bar{n}}$ and $\bar{c}\!:=\!(\bar{c}^1; \ldots; \bar{c}^p) \in \mathbb{R}^{\bar{n}}$, where $c^i,\bar{c}^i \in \mathbb{R}^{n_i}$ for $i=1,\ldots,p$, and $(. \ ; \ldots ; .)$ denotes the concatenation of the column vectors. Note that $\epsilon \in \mathbb{R}$ is the perturbation parameter, and $\bar{c}$ is a fixed nonzero vector. Perturbation of the objective coefficient vector along a fixed direction is a basic sensitivity analysis question and is well-understood in the literature of linear optimization (LO), see e.g.,~\cite{RTV05}.

\vspace{5px}
\noindent
The optimal value of $(\mathrm{P}_{\epsilon})$ is denoted by $\psi(\epsilon)\!:\!\mathbb{R} \to \mathbb{R}\cup\{-\infty,\infty\}$, and it is called the \textit{optimal value function}%
\footnote{In this context, $\infty$ simply means that the primal problem is infeasible.}. Let $\mathcal{E} \subseteq \mathbb{R}$ be the domain of the optimal value function, i.e., the set of all $\epsilon$ such that $\psi(\epsilon) \in \mathbb{R}$. The interior point condition guarantees that $\mathcal{E}$ is nonempty and non-singleton.
\begin{assumption}\label{LID}
The coefficient matrix $A$ has full row rank.
\end{assumption}
\begin{assumption}\label{IPC}
 The interior point condition holds for both $(\mathrm{P}_{\epsilon})$ and $(\mathrm{D}_{\epsilon})$ at $\epsilon=0$, i.e., there exists a feasible solution $\big(x^{\circ}(0);y^{\circ}(0);s^{\circ}(0)\big)$ such that 
\begin{align*} 
x^{\circ i}(0),s^{\circ i}(0) \in \interior(\mathbb{L}^{n_i}_+), \qquad i=1,\ldots,p,
\end{align*}
where $\interior(\mathbb{L}^{n_i}_+)\!:=\!\Big\{x^i \in \mathbb{R}^{n_i} \mid x^i_1 > \|  x^i_{2:n_i}\|\Big\}$ denotes the interior of $\mathbb{L}^{n_i}_+$.
 \end{assumption}

\vspace{5px}
\noindent
By~\cite[Lemma~3.1]{GS99} and a theorem of the alternative~\cite[Lemma~12.6]{CSW13}, Assumptions~\ref{LID} and~\ref{IPC} imply the existence of an interior solution $\big(x^{\circ}(\epsilon);y^{\circ}(\epsilon);s^{\circ}(\epsilon)\big)$ at every $\epsilon \in \interior(\mathcal{E})$. Furthermore, under Assumptions~\ref{LID} and~\ref{IPC}, $\psi(\epsilon)$ is proper and concave on $\mathcal{E}$~\cite[Lemma~2.2]{BJRT96} and continuous on $\interior(\mathcal{E})$~\cite[Corollary~2.109]{BS00}. Thus $\mathcal{E}$ is a closed, possibly unbounded, interval~\cite[Lemma~2.2]{BJRT96}. 

\vspace{5px}
\noindent
Given a fixed $\epsilon$, it is well-known that primal-dual interior point methods (IPMs)~\cite{NN94} can approximately solve $(\mathrm{P}_{\epsilon})$ and $(\mathrm{D}_{\epsilon})$ in polynomial time. We refer the reader to~\cite{AG03} and the references cited therein for a review of IPMs for SOCO, and to~\cite{ART03} for implementation of IPMs for SOCO. 

\subsection{Related works}
Sensitivity and stability analysis has been extensively studied for nonlinear optimization problems. Classical results about semicontinuity of the optimal set and the optimal value function date back to 1960's using the set-valued mapping theory~\cite{B97,Ho73b}. Zlobec et al.~\cite{ZGB82} identified the region of stability for perturbed convex optimization problems. The sensitivity of KKT solutions was studied by Fiacco~\cite{Fi76} and Fiacco and McCormick~\cite{FM90} under linear independence constraint qualification, second-order sufficient condition, and strict complementarity condition. Robinson~\cite{R82} released the strict complementarity condition but imposed a stronger second-order sufficient condition. Sensitivity analysis of nonlinear semidefinite optimization (SDO) and nonlinear SOCO problems has been widely studied in the past twenty years~\cite{BR2005,BS00,HMS20,MOS14,S97}. Under Slater and nondegeneracy conditions, by applying the \textit{implicit function theorem}~\cite{D60}, Shapiro~\cite{S97} established the differentiability of the unique optimal solution for a nonlinear convex SDO problem. Bonnans and Ram\'irez~\cite{BR2005} characterized strongly regular KKT solutions for nonlinear SOCO problems. Stability of local optimal solutions and constraint systems for nonlinear SOCO problems has been extensively studied in~\cite{HMS20,MOS14} through a variational analysis approach. We refer the reader to~\cite{BS00} for a comprehensive treatment of nonlinear conic optimization problems and to~\cite{Fi83} for a survey of classical results.

\vspace{5px}
\noindent
Sensitivity analysis based on the optimal partition%
\footnote{Inspired by the Goldman-Tucker theorem~\cite{GT56}, the concept of the optimal partition for LO is well-known in the literature of IPMs, see e.g.,~\cite{RTV05}.} is well-understood for LO~\cite{AM92,G94,JRT92} and linearly constrained quadratic optimization (LCQO) problems~\cite{BJRT96}. Compared to the optimal basis approach of LO which relies on nondegeneracy of the problem~\cite{JRT92,RTV05}, the optimal partition approach provides unique sensitivity analysis information regardless of any regularity condition. The optimal partition approach was initially studied by Adler and Monteiro~\cite{AM92}, Jansen et al.~\cite{JRT92}, and Greenberg~\cite{G94} for LO. The approach was extended to LCQO by Berkelaar et al.~\cite{BJRT96}. For LO and LCQO, $\interior(\mathcal{E})$ is partitioned into so-called invariancy sets. Generally speaking, an invariancy set is a maximal subset of $\interior(\mathcal{E})$, either a singleton or an open subinterval, on which the optimal partition is constant w.r.t. $\epsilon$. For a parametric SDO problem, Goldfarb and Scheinberg~\cite{GS99} investigated the differentiability of the optimal value function and provided auxiliary problems to compute the boundary points of an invariancy set. Yildirim~\cite{Yil04} extended the approach in~\cite{GS99} to linear conic optimization problems. Recently, Mohammad-Nezhad and Terlaky~\cite{MT20} introduced the concepts of a nonlinearity interval and transition point for the optimal partition of parametric SDO problems. Subsequently, Hauenstein et al.~\cite{HMTT19} proposed a numerical procedure, based on numerical algebraic geometry and ordinary differential equations, to partition $\interior(\mathcal{E})$ into the finite union of invariancy intervals, nonlinearity intervals, and transition points.

\subsection{Contribution}
While the notion of an optimal basis no longer exists for SOCO, the optimal partition is uniquely defined for any instance of SOCO which satisfies strong duality, regardless of strict complementarity and nondegeneracy conditions~\cite{BR2005}. Interestingly enough, the optimal partition of SOCO can be identified from a trajectory of interior solutions generated by a primal-dual IPM~\cite{TW14}. The optimal partition of a SOCO problem can be exploited to recover quadratic convergence to the unique optimal solution~\cite{MT19a}. Nevertheless, to date, only a few studies have been devoted to parametric analysis of linear conic optimization problems. In particular, the optimal partition and parametric analysis of $(\mathrm{P}_{\epsilon})-(\mathrm{D}_{\epsilon})$ have not been fully investigated in the literature. Unlike a univariate parametric LO problem~\cite{RTV05}, the optimal value function $\psi(\epsilon)$ is piecewise nonlinear, see e.g., problem~\eqref{ex:introductory_SOCO}, and to the best of our knowledge, there is no method in the literature for the computation of those nonlinear pieces.

\vspace{5px}
\noindent
Motivated by the study of parametric SDO in~\cite{GS99,HMTT19,MT20} and the identification of the optimal partition in~\cite{TW14}, we investigate the optimal partition approach for $(\mathrm{P}_{\epsilon})-(\mathrm{D}_{\epsilon})$ using its own algebraic structure, see Section~\ref{optimal_partition_definition}. Although $(\mathrm{P}_{\epsilon})$ and $(\mathrm{D}_{\epsilon})$ can be cast into a SDO formulation and studied independently using the results in~\cite{HMTT19,MT20}, see Section~\ref{conclusion}, we establish stronger results, in both theoretical and practical respects, when the optimal partition of $(\mathrm{P}_{\epsilon})-(\mathrm{D}_{\epsilon})$ is directly investigated. In doing so, we decompose $\interior(\mathcal{E})$ into the union of invariancy intervals, nonlinearity intervals, and transition points, to be defined in Section~\ref{sec:sensitivity_optimal_partition}. In our sensitivity analysis approach, invariancy intervals and transition points are natural extensions from parametric LO, whereas a nonlinearity interval is a newly defined concept. Invariancy and nonlinearity intervals are maximal subintervals of $\interior(\mathcal{E})$ on which the optimal partition is preserved under the given perturbation, while transition points are the boundary points of invariancy and nonlinearity intervals which belong to $\interior(\mathcal{E})$. 

\vspace{5px}
\noindent
Roughly speaking, our main contributions are:
 \begin{itemize}
 \item Characterization of nonlinearity intervals and transition points; 
 \item A numerical procedure for the computation of a nonlinearity interval; 
 \item Sufficient conditions for the identification of a transition point. 
\end{itemize}
 
\vspace{5px}
\noindent 
Unlike classical sensitivity analysis results for SOCO problems~\cite{BR2005,BS00} which only explore a small neighborhood of an optimal solution, our optimal partition approach (Algorithm~\ref{alg:nonlinearity_computation} and Theorem~\ref{transition_point_identification}) aims  to describe the optimal partition on the entire $\interior(\mathcal{E})$. More specifically, using the semi-algebraicity of the optimal set, we prove that the set of transition points is finite, see Theorem~\ref{transition_point_finiteness}. Furthermore, we provide sufficient conditions for the existence of a nonlinearity interval on the basis of Painlev\'e-Kuratowski set convergence, see Lemma~\ref{partition_inclusion_lemma} and Theorem~\ref{nonlinearity_continuity_SOCO}, and we show that continuity might fail on a nonlinearity interval, see problem~\eqref{SOCO_counterexample}. Under the existence of a strictly complementary optimal solution at a given $\bar{\epsilon}$, we solve nonlinear auxiliary problems alongside the so-called \textit{quantifier elimination} algorithm~\cite[Algorithm~14.5]{BPR06} to find a subinterval of the nonlinearity interval surrounding $\bar{\epsilon}$, see Algorithm~\ref{alg:nonlinearity_computation}. Finally, under primal and dual nondegeneracy conditions, we show that the derivative information from a nonlinear reformulation of $(\mathrm{D}_{\epsilon})$ can be invoked to identify a transition point of the optimal partition, see Theorem~\ref{transition_point_identification}.

\vspace{5px}
\noindent
Invariancy/nonlinearity intervals and transition points have important theoretical and practical implications. For instance, invariancy and nonlinearity intervals are the maximal subintervals of $\interior(\mathcal{E})$ on which $\psi(\epsilon)$ is linear and nonlinear, respectively, and transition points are the points at which at least either of strict complementarity or continuity of the optimal set mapping fails. Therefore, an IPM applied to $(\mathrm{P}_{\epsilon})-(\mathrm{D}_{\epsilon})$ at a transition point has potentially a weaker convergence behavior, because the Jacobian, see~\eqref{Jacobian_SOCO}, is always singular at a transition point. On the other hand, due to the lack of efficient warm-start procedures for IPMs, the identification of an invariancy/nonlinearity interval is important for “post-optimal analysis” and reoptimization of SOCO problems. In those applications, the optimal partition on the entire $\interior(\mathcal{E})$ can be characterized by utilizing an IPM only at a finite number of sample points in the interior of invariancy and nonlinearity intervals. This is in direct contrast with invoking an IPM for arbitrary points in $\interior(\mathcal{E})$, which is not only computationally expensive but also susceptible to missing a transition point.

\setlength{\abovedisplayskip}{2pt}
\setlength{\belowdisplayskip}{2pt}
\subsection{Organization of the paper}
The rest of this paper is organized as follows. Preliminaries are provided in Section~\ref{background}. In Sections~\ref{optimal_partition} through~\ref{nondegeneracy}, we provide background information about the complementarity, optimal partition, and nondegeneracy conditions; In Section~\ref{set_valued_analysis}, we review set-valued analysis and the continuity of the feasible set and optimal set mappings for $(\mathrm{P}_{\epsilon})$ and $(\mathrm{D}_{\epsilon})$. In Section~\ref{sec:sensitivity_optimal_partition}, we investigate the sensitivity of the optimal partition and optimal solutions w.r.t. $\epsilon$. In Section~\ref{sec:nonlinearity_transition}, we formally define the concepts of nonlinearity interval and transition point of the optimal partition; In Section~\ref{sec:continuity_nonlinearity}, we provide sufficient conditions for the existence of a nonlinearity interval. Furthermore, under strict complementarity condition, we present a numerical procedure for the computation of a nonlinearity interval; In Section~\ref{nonlinearity_interval_strict_fails}, under primal and dual nondegeneracy conditions, we show how to identify a transition point using higher-order derivatives of the Lagrange multipliers from a nonlinear reformulation of $(\mathrm{D}_{\epsilon})$. In Section~\ref{numerical_results}, we provide numerical results to demonstrate the convergence of lower and upper bounds generated by the numerical procedure and the magnitude of the derivatives. Our concluding remarks and directions for future research are summarized in Section~\ref{conclusion}. 
     
\paragraph{Notation}
We adopt the notation in accordance with~\cite{MT19a}, where $\mathbb{R}_+x$ is defined as
\begin{align*}
\mathbb{R}_+x\!:=\!\big\{\check{x} \mid \check{x}=\zeta x, \ \zeta \in \mathbb{R}_+\big\},
\end{align*}
and $R^i$ is the $n_i \times n_i$ diagonal matrix given by
\begin{align}
R^i\!:=\!\mathrm{diag}( 1, \ -1, \ldots, -1). \label{R_definition}
\end{align}
\noindent
For a convex set $\mathcal{C}$, the interior, relative interior, and the boundary are denoted by $\interior(\mathcal{C})$, $\ri(\mathcal{C})$, and $\boundary(\mathcal{C})$, respectively. Furthermore, $\mathbb{S}^{n_i}$ represents the vector space of $n_i \times n_i$ symmetric matrices, and $\mathbb{S}_+^{n_i}$ denotes the cone of $n_i \times n_i$ positive semidefinite matrices. Finally, letting $N^i$ be an $m \times n_i$ matrix for $i \in I \subseteq \{1,\ldots,p\}$, $(N^i)_I$ is an $m \times \sum_{i \in I}n_i$ matrix formed by the matrices $N^i$, for $i \in I$, put side by side.

\section{Preliminaries}\label{background}
\subsection{Optimality and complementarity}\label{optimal_partition}
The primal and dual optimal set mappings are defined as
\begin{align*}
\mathcal{P}^*(\epsilon)&\!:=\!\{x \mid (c + \epsilon \bar{c})^T x = \psi(\epsilon), \ Ax=b, \ x \in \mathcal{L}^{\bar{n}}_+\}, \\[-1\jot]
\mathcal{D}^*(\epsilon)&\!:=\!\{(y; s) \mid b^T y = \psi(\epsilon), \ A^T y+s=c + \epsilon \bar{c}, \ s \in \mathcal{L}^{\bar{n}}_+\}.
\end{align*}
Assumptions~\ref{LID} and~\ref{IPC} ensure that at every $\epsilon \in \interior(\mathcal{E})$ strong duality holds%
\footnote{In this paper, strong duality means that the duality gap is zero at optimality, and the optimal sets $\mathcal{P}^*(\epsilon)$ and $\mathcal{D}^*(\epsilon)$ are nonempty.}, and that both $\mathcal{P}^*(\epsilon)$ and $\mathcal{D}^*(\epsilon)$ are nonempty and compact, see e.g.,~\cite[Corollary~4.2]{T01}. Throughout this paper, $\big(x(\epsilon);y(\epsilon);s(\epsilon)\big) \in \mathcal{P}^*(\epsilon) \times \mathcal{D}^*(\epsilon)$ denotes a primal-dual optimal solution of $(\mathrm{P}_{\epsilon})$ and $(\mathrm{D}_{\epsilon})$. 

\vspace{5px}
\noindent
Under the strong duality assumption, $\mathcal{P}^*(\epsilon) \times \mathcal{D}^*(\epsilon)$ is the set of solutions of
\begin{align}\label{KKT_SOCO}
F\big((x;y;s)\big)\!:=\!\begin{pmatrix} Ax-b\\
A^T y+s-c - \epsilon \bar{c}\\
x \circ s
\end{pmatrix} = 0, \quad
x, s \in \mathcal{L}^{\bar{n}}_+,
\end{align}
where $\circ:\mathbb{R}^{\bar{n}} \times \mathbb{R}^{\bar{n}} \to \mathbb{R}^{\bar{n}}$ is a bilinear map~\cite{AG03} defined as
\begin{equation}\label{Jordan_prod}
x^i \circ s^i = L(x^i)s^i=L(s^i)x^i, \quad i=1,\ldots,p,
\end{equation}
in which
\begin{align*}
L(x^i)\!:=\!\begin{pmatrix} x_1^i & (x^i_{2:n_i})^T\\x^i_{2:n_i} & x_1^i I_{n_i-1} \end{pmatrix}
\end{align*}
is an \textit{arrow-head} symmetric matrix, $I_{n_i-1}$ is the identity matrix of size $n_i-1$, and $x \circ s\!:=\!(x^1 \circ s^1; \ldots; x^p \circ s^p)=0$ denotes the complementarity condition. It is immediate from the eigenstructure of $L(x^{i})$, see e.g.,~\cite[Theorem~3]{AG03}, that
\begin{align}\label{rank_structure}
\rank\!\big(L(x^{i})\big)=\begin{cases} n_i &x^{i} \in \interior(\mathbb{L}^{n_i}_+),\\ n_i-1 &x^{i} \in \boundary(\mathbb{L}^{n_i}_+)\setminus\{0\} ,\\ 0 &x^{i} =0, \end{cases} \qquad i=1,\ldots,p,
\end{align}
where $\boundary(\mathbb{L}^{n_i}_+)\!:=\!\big\{x^i \in \mathbb{L}^{n_i}_+ \mid x^i_1= \|  x^i_{2:n_i}\|\big\}$.

\vspace{5px}
\noindent
The Jacobian of equation system~\eqref{KKT_SOCO} is given by 
\begin{align}
\nabla F\big((x;y;s)\big)\!:=\!\begin{pmatrix} A & 0 & 0\\0 & A^T & I \\ L(s) & 0 & L(x) \end{pmatrix}, \label{Jacobian_SOCO}
\end{align}
where
\begin{equation}\label{blk_diag_arrow_hat}
\begin{aligned}
L\big(x\big)&\!:=\!\mathrm{diag}\big(L(x^{1}), \ldots, L(x^{p})\big),\\
L\big(s\big)&\!:=\!\mathrm{diag}\big(L(s^{1}), \ldots, L(s^{p})\big).
\end{aligned}
\end{equation}

\vspace{5px}
\noindent
Among all primal-dual optimal solutions, we are interested in maximally and strictly complementary optimal solutions, which are indicated by superscript $^*$.
\begin{definition}\label{strict}
Let a primal-dual optimal solution $\big(x^*(\epsilon);y^*(\epsilon);s^*(\epsilon)\big) \in \mathcal{P}^*(\epsilon) \times \mathcal{D}^*(\epsilon)$ be given for a fixed $\epsilon$. Then $\big(x^*(\epsilon);y^*(\epsilon);s^*(\epsilon)\big)$ is called \textit{maximally complementary} if 
\begin{align*}
x^*(\epsilon) \in \ri\!\big(\mathcal{P}^*(\epsilon)\big) \quad \text{and} \quad \big(y^*(\epsilon);s^*(\epsilon)\big) \in \ri\!\big(\mathcal{D}^*(\epsilon)\big),
\end{align*}
and it is called \textit{strictly complementary} if $x^*(\epsilon)+s^*(\epsilon) \in \interior(\mathcal{L}^{\bar{n}}_+)$.
\end{definition}

\vspace{5px}
\noindent
A strictly complementary optimal solution may fail to exist at some $\epsilon$, see e.g., problem~\eqref{ex:introductory_SOCO}. However, under Assumptions~\ref{LID} and~\ref{IPC}, a maximally complementary optimal solution always exists for every $\epsilon \in \interior(\mathcal{E})$.

\begin{remark}
Throughout this paper, the strict complementarity condition is said to hold at $\epsilon$ if there exists a strictly complementary optimal solution $\big(x^*(\epsilon);y^*(\epsilon);s^*(\epsilon)\big)$.
\end{remark}

\subsection{Optimal partition}\label{optimal_partition_definition}
The notion of the optimal partition was originally defined for LO, where the index set of the variables is partitioned into two disjoint complementary sets~\cite{GT56,JRT92}. Associated with any instance of SOCO with strong duality, the optimal partition is uniquely defined by using solutions from the relative interior of the optimal set~\cite{BR2005,TW14}. Mathematically, given a fixed $\epsilon$, the \textit{optimal partition} of $(\mathrm{P}_{\epsilon})-(\mathrm{D}_{\epsilon})$ is defined as $\big\{\mathcal{B}(\epsilon),\mathcal{N}(\epsilon),\mathcal{R}(\epsilon),\mathcal{T}(\epsilon)\big\}$, where
\begin{align*}
\mathcal{B}(\epsilon)\!&:=\! \big\{i \mid \exists \ x(\epsilon) \in \mathcal{P}^*(\epsilon) \ \st \ x^i(\epsilon) \in \interior(\mathbb{L}^{n_i}_+)\big\},\\[-1\jot]
\mathcal{N}(\epsilon)\!&:= \!\big\{i \mid \exists \ \big(y(\epsilon);s(\epsilon)\big) \in \mathcal{D}^*(\epsilon) \ \st \ s^i(\epsilon) \in \interior(\mathbb{L}^{n_i}_+)\big\},\\[-1\jot]
\mathcal{R}(\epsilon)\!&:=\! \big\{i \mid \exists \ \big(x(\epsilon);y(\epsilon);s(\epsilon)\big) \in \mathcal{P}^*(\epsilon) \times \mathcal{D}^*(\epsilon) \ \st \ x^i(\epsilon),s^i(\epsilon) \in \boundary(\mathbb{L}^{n_i}_+)\setminus \{0\}\big\},\\[-1\jot]
\mathcal{T}(\epsilon)\!&:=\{1,\ldots,p\} \setminus \big\{\mathcal{B}(\epsilon) \cup \mathcal{N}(\epsilon) \cup \mathcal{R}(\epsilon)\big\}.
\end{align*}

\noindent
The convexity of the optimal set implies that the subsets $\mathcal{B}(\epsilon)$, $\mathcal{N}(\epsilon)$, $\mathcal{R}(\epsilon)$, and $\mathcal{T}(\epsilon)$ are mutually disjoint and their union is the index set $\{1,\ldots,p\}$. Additionally, it follows from the complementarity condition that for all $\big(x(\epsilon);y(\epsilon);s(\epsilon)\big) \in \mathcal{P}^*(\epsilon) \times \mathcal{D}^*(\epsilon)$ we have $x^i(\epsilon)=0$  for all $i \in \mathcal{N}(\epsilon)$ and $s^i(\epsilon)=0$ for all $i \in \mathcal{B}(\epsilon)$. 

\vspace{5px}
\noindent
We further partition $\mathcal{T}(\epsilon)$ into $\!\big\{\mathcal{T}_1(\epsilon),\mathcal{T}_2(\epsilon),\mathcal{T}_3(\epsilon)\big\}$, where
\begin{align*}
\mathcal{T}_1(\epsilon)\!&:=\!\big\{i \in \mathcal{T}(\epsilon) \mid x^i(\epsilon) = s^i(\epsilon)=0, \ \forall \big(x(\epsilon);y(\epsilon);s(\epsilon)\big) \in \mathcal{P}^*(\epsilon) \times \mathcal{D}^*(\epsilon) \big\}, \\[-1\jot]
\mathcal{T}_2(\epsilon)\!&:=\!\big\{i \in \mathcal{T}(\epsilon) \mid \exists \ x(\epsilon) \in \mathcal{P}^*(\epsilon) \ \st \ x^i(\epsilon) \in \boundary(\mathbb{L}^{n_i}_+)\setminus \{0\}, \\
 &\qquad\qquad\qquad\qquad\qquad\qquad\qquad\quad s^i(\epsilon) = 0, \ \forall\big(y(\epsilon);s(\epsilon)\big) \in \mathcal{D}^*(\epsilon)\big\},\\[-1\jot]
\mathcal{T}_3(\epsilon)\!&:=\!\big\{i \in \mathcal{T}(\epsilon) \mid \exists \ \big(y(\epsilon);s(\epsilon)\big) \in \mathcal{D}^*(\epsilon) \ \st \ s^i(\epsilon) \in \boundary(\mathbb{L}^{n_i}_+)\setminus \{0\}, \\
 &\qquad\qquad\qquad\qquad\qquad\qquad\qquad\qquad\qquad x^i(\epsilon)= 0, \ \forall  x(\epsilon) \in \mathcal{P}^*(\epsilon)  
\big\}.
\end{align*}

\noindent
The definition of maximally and strictly complementary optimal solutions can be rephrased by using the optimal partition of the problem: $\big(x(\epsilon);y(\epsilon);s(\epsilon)\big) \in \mathcal{P}^*(\epsilon) \times \mathcal{D}^*(\epsilon)$ is maximally complementary if and only if $x^i(\epsilon) \in \interior(\mathbb{L}^{n_i}_+)$ for all $i \in \mathcal{B}(\epsilon)$, $s^i(\epsilon) \in \interior(\mathbb{L}^{n_i}_+)$ for all $i \in \mathcal{N}(\epsilon)$, $x_1^i(\epsilon) > 0$ for all $i \in \mathcal{R}(\epsilon) \cup \mathcal{T}_2(\epsilon)$, and $s_1^i(\epsilon) > 0$ for all $i \in \mathcal{R}(\epsilon) \cup \mathcal{T}_3(\epsilon)$, see e.g.,~\cite[Proposition~1.3.3]{B09}. A maximally complementary optimal solution $\big(x^*(\epsilon);y^*(\epsilon);s^*(\epsilon)\big)$ is strictly complementary if and only if $\mathcal{T}(\epsilon)=\emptyset$.

\subsection{Nondegeneracy conditions}\label{nondegeneracy}
The concepts of primal and dual nondegeneracy for SOCO were introduced in~\cite{AG03}. Here, we tailor and adapt the nondegeneracy conditions only for a maximally complementary optimal solution. 

\vspace{5px}
\noindent
Assume that $\big(x^*(\epsilon);y^*(\epsilon);s^*(\epsilon)\big)$ is a maximally complementary optimal solution of $(\mathrm{P}_{\epsilon})$ and $(\mathrm{D}_{\epsilon})$ at a given $\epsilon$. Then $x^*(\epsilon)$ is called \textit{primal nondegenerate} if  
\begin{align}\label{primal_nondegenerate}
\begin{pmatrix} \big(A^i\bar{P}^{*i}(\epsilon)\big)_{\mathcal{R}(\epsilon) \cup \mathcal{T}_2(\epsilon)}, \ (A^i)_{\mathcal{B}(\epsilon)} \end{pmatrix}
\end{align}
has full row rank, where the columns of $\bar{P}^{*i}(\epsilon) \in \mathbb{R}^{n_i \times n_i - 1}$ are normalized eigenvectors of the positive eigenvalues of $L\big((x^{*i}(\epsilon)\big)$. Furthermore, $\big(y^*(\epsilon);s^*(\epsilon)\big)$ is called \textit{dual nondegenerate} if
\begin{align*}
\begin{pmatrix} \big(A^iR^i s^{*i}(\epsilon)\big)_{\mathcal{R}(\epsilon) \cup \mathcal{T}_3(\epsilon)}, \ (A^i)_{\mathcal{B}(\epsilon) \cup \mathcal{T}_1(\epsilon) \cup \mathcal{T}_2(\epsilon)} \end{pmatrix}
\end{align*}
has full column rank, where $R^i$ is defined in~\eqref{R_definition}. Given a fixed $\epsilon$, if there exists a primal (dual) nondegenerate optimal solution, then the dual (primal) optimal set mapping is single-valued at $\epsilon$. Furthermore, if there exists a strictly complementary optimal solution at $\epsilon$, then the reverse direction is true as well. The proof can be found in~\cite{AG03}.

\begin{remark}
In this paper, the primal and dual nondegeneracy conditions are said to hold at $\epsilon$ if there exists a nondegenerate maximally complementary optimal solution at~$\epsilon$.
\end{remark}
We invoke the primal and dual nondegeneracy conditions in Section~\ref{nonlinearity_interval_strict_fails} for the identification of a transition point.

\subsection{Set-valued analysis}\label{set_valued_analysis}
In this section, we briefly review the continuity of set-valued mappings from~\cite{RD14,Rock09}. Let $\mathbb{N}$ be the set of natural numbers, $\mathcal{J}$ be the collection of subsets $J \subset \mathbb{N}$ with $\mathbb{N} \setminus J$ being finite, $\mathcal{J}_{\infty}$ denote the collection of all infinite subsets of $\mathbb{N}$, and $\{\mathcal{C}_k\}_{k=1}^{\infty}$ be a sequence of subsets of $\mathbb{R}^{\bar{n}}$. The \textit{outer limit} of $\{\mathcal{C}_k\}_{k=1}^{\infty}$ is defined as
\begin{align*}
\limsup\limits_{k \to \infty} \mathcal{C}_k\!:=\!\big\{x \mid \exists \ J \in \mathcal{J}_{\infty} \ \text{and} \ x_k \in \mathcal{C}_k \ \text{for} \ k \in J \ \st \ \lim_{k \in J} x_k= x \big\},
\end{align*}
where $\lim_{k \in J} x_k$ denotes the limit of a convergent sequence $x_k$ as $k \to \infty$ and $k \in J$. On the other hand, the \textit{inner limit} of $\{\mathcal{C}_k\}_{k=1}^{\infty}$ is given by
\begin{align*}
\liminf\limits_{k \to \infty} \mathcal{C}_k\!:=\!\big\{x \mid \exists \ J \in \mathcal{J} \ \text{and} \ x_k \in \mathcal{C}_k \ \text{for} \ \ k \in J \ \st \ \lim_{k \in J} x_k= x \big\}.
\end{align*}
If the inner and outer limits coincide, then the limit of $\{\mathcal{C}_k\}_{k=1}^{\infty}$ exists \textit{in the sense of Painlev\'e-Kuratowski}, i.e.,
\begin{align*}
\lim_{k \to \infty} \mathcal{C}_k : =\limsup\limits_{k \to \infty} \mathcal{C}_k = \liminf\limits_{k \to \infty} \mathcal{C}_k.
\end{align*}
When $\mathcal{C}_k \neq \emptyset$, $\limsup\limits_{k \to \infty} \mathcal{C}_k$ denotes the collection of all accumulation points of $\{x_k\}_{k=1}^{\infty}$ such that $x_k \in \mathcal{C}_k$, while $\liminf\limits_{k \to \infty} \mathcal{C}_k$ represents the collection of all limit points of $\{x_k\}_{k=1}^{\infty}$. Recall that both the $\limsup$ and $\liminf$ of a sequence of sets are closed subsets of $\mathbb{R}^{\bar{n}}$~\cite[Section 3.1]{RD14}.

\vspace{5px}
\noindent
A \textit{set-valued} mapping $\mathrm{\Phi}(\epsilon):\mathbb{R} \rightrightarrows \mathbb{R}^{\bar{n}}$ assigns a subset of $\mathbb{R}^{\bar{n}}$ to each element of $\epsilon \in \mathbb{R}$. The domain of the set-valued mapping $\mathrm{\Phi}(\epsilon)$ is defined as
\begin{align*}
\dom(\mathrm{\Phi})\!:=\!\big\{\epsilon \in \mathbb{R} \mid \mathrm{\Phi}(\epsilon) \neq \emptyset \big\},
\end{align*}
and its range is given by 
\begin{align*}
\mathrm{range}(\mathrm{\Phi})\!:=\!\big\{x \in \mathbb{R}^{\bar{n}} \mid x \in \mathrm{\Phi}(\epsilon), \ \text{for some} \ \epsilon \in \mathbb{R} \big\}=\bigcup\limits_{\epsilon \in \mathbb{R}} \mathrm{\Phi}(\epsilon).
\end{align*}
\noindent
Various forms of continuity exist for a set-valued mapping. In this paper, continuity of a set-valued mapping is formed on the basis of Painlev\'e-Kuratowski set convergence, see~\cite[Chapter~5]{Rock09}, which is equivalent to the notion of continuity of a \textit{point-to-set map} in~\cite{Ho73b}, see~\cite[Corollary~1.1 and Theorem~2]{Ho73b} and~\cite[Theorem~5.7]{Rock09}.

\vspace{5px}
\noindent
Let $\mathrm{\Gamma}$ be a subset of $\mathbb{R}$ containing $\bar{\epsilon}$, and define
\begin{align*}
\limsup\limits_{\epsilon \to \bar{\epsilon}} \mathrm{\Phi}(\epsilon)&\!:=\!\big \{x \mid \exists \{\epsilon_k\}_{k=1}^\infty \subseteq \mathrm{\Gamma} \ \text{with} \ \epsilon_k \to \bar{\epsilon}, \exists \ x_k \to x \ \text{with} \ x_k \in \mathrm{\Phi}(\epsilon_k) \big \},\\[-1\jot]
\liminf\limits_{\epsilon \to \bar{\epsilon}} \mathrm{\Phi}(\epsilon)&\!:=\!\big \{x \mid \forall \{\epsilon_k\}_{k=1}^\infty \subseteq \mathrm{\Gamma} \ \text{with} \ \epsilon_k \to \bar{\epsilon}, \exists \ x_k \to x \ \text{with} \ x_k \in \mathrm{\Phi}(\epsilon_k) \big \}.
\end{align*}
Then a set-valued mapping $\mathrm{\Phi}(\epsilon)$ is called \textit{outer semicontinuous} at $\bar{\epsilon}$ relative to $\mathrm{\Gamma}$ if 
\begin{align*}
\limsup_{\epsilon \to \bar{\epsilon}} \mathrm{\Phi}(\epsilon) \subseteq \mathrm{\Phi}(\bar{\epsilon}) 
\end{align*}
and \textit{inner semicontinuous} at $\bar{\epsilon}$ relative to $\mathrm{\Gamma}$ if
\begin{align*}
\liminf_{\epsilon\to \bar{\epsilon}} \mathrm{\Phi}(\epsilon) \supseteq \mathrm{\Phi}(\bar{\epsilon})
\end{align*}
holds. The set-valued mapping $\mathrm{\Phi}(\epsilon)$ is \textit{Painlev\'e-Kuratowski continuous} at $\bar{\epsilon}$ relative to $\mathrm{\Gamma}$ if it is both outer and inner semicontinuous at $\bar{\epsilon}$ relative to $\mathrm{\Gamma}$.  

\vspace{5px}
\noindent
By Assumptions~\ref{LID} and~\ref{IPC}, we can show that $\mathcal{P}^*(\epsilon)$ and $\mathcal{D}^*(\epsilon)$ are outer semicontinuous relative to $\interior(\mathcal{E})$. The result follows from~\cite[Theorems~8 and~12]{Ho73b}.

\begin{lemma}\label{optimal_set_mapping_outer_semicontinuity}
The set-valued mappings $\mathcal{P}^*(\epsilon)$ and $\mathcal{D}^*(\epsilon)$ are outer semicontinuous relative to $\interior(\mathcal{E})$.
\end{lemma}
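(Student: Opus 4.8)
The plan is to verify outer semicontinuity directly from the definition, relying only on two facts already available: the second-order cone $\mathcal{L}^{\bar{n}}_+$ is closed, and the optimal value function $\psi$ is continuous on $\mathcal{E}$ (by Assumption~\ref{IPC} and \cite[Corollary~2.109]{BS00}). Fix an arbitrary $\bar{\epsilon} \in \interior(\mathcal{E})$; at such a point $\psi(\bar{\epsilon})$ is finite and, as recalled before the statement, $\mathcal{P}^*(\bar{\epsilon})$ and $\mathcal{D}^*(\bar{\epsilon})$ are nonempty. To show $\limsup_{\epsilon \to \bar{\epsilon}} \mathcal{P}^*(\epsilon) \subseteq \mathcal{P}^*(\bar{\epsilon})$, I would take $x$ in the outer limit, so that there exist $\{\epsilon_k\} \subseteq \interior(\mathcal{E})$ with $\epsilon_k \to \bar{\epsilon}$ and points $x_k \in \mathcal{P}^*(\epsilon_k)$ with $x_k \to x$ (if no such sequence exists the outer limit is empty and there is nothing to prove). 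Each $x_k$ satisfies $Ax_k = b$ and $x_k \in \mathcal{L}^{\bar{n}}_+$; letting $k \to \infty$ and using closedness of $\mathcal{L}^{\bar{n}}_+$ together with continuity of the map $x \mapsto Ax$ gives $Ax = b$ and $x \in \mathcal{L}^{\bar{n}}_+$, so $x$ is feasible for $(\mathrm{P}_{\bar{\epsilon}})$.

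It then remains to show that $x$ is optimal, i.e. $(c + \bar{\epsilon}\bar{c})^T x = \psi(\bar{\epsilon})$. Here I would argue that $(c + \epsilon_k \bar{c})^T x_k \to (c + \bar{\epsilon}\bar{c})^T x$ because both $\epsilon_k \to \bar{\epsilon}$ and $x_k \to x$, while on the other hand $(c + \epsilon_k \bar{c})^T x_k = \psi(\epsilon_k) \to \psi(\bar{\epsilon})$ by continuity of $\psi$ on $\mathcal{E}$; comparing the two limits yields $(c + \bar{\epsilon}\bar{c})^T x = \psi(\bar{\epsilon})$, hence $x \in \mathcal{P}^*(\bar{\epsilon})$. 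The dual statement is proved verbatim: for $(y_k;s_k) \in \mathcal{D}^*(\epsilon_k)$ with $(y_k;s_k) \to (y;s)$, the relations $A^T y_k + s_k = c + \epsilon_k \bar{c}$ and $s_k \in \mathcal{L}^{\bar{n}}_+$ pass to the limit to give dual feasibility at $\bar{\epsilon}$, and $b^T y_k = \psi(\epsilon_k) \to \psi(\bar{\epsilon})$ forces $b^T y = \psi(\bar{\epsilon})$, so $(y;s) \in \mathcal{D}^*(\bar{\epsilon})$. Since $\bar{\epsilon} \in \interior(\mathcal{E})$ was arbitrary, this establishes outer semicontinuity relative to $\interior(\mathcal{E})$.

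An equivalent route, which is presumably what \cite[Theorem~8]{Ho73b} supplies, is to observe that the primal and dual feasible set mappings depend continuously on $\epsilon$ on $\interior(\mathcal{E})$ --- trivially for $(\mathrm{P}_{\epsilon})$, whose feasible set is independent of $\epsilon$, and through the affine term $c + \epsilon\bar{c}$ for $(\mathrm{D}_{\epsilon})$ --- while the objectives are jointly continuous in $(\epsilon,x)$ and $(\epsilon,(y;s))$; Hogan's closedness theorem for solution set maps then applies. I do not expect a genuine obstacle in either approach; the only points requiring care are (i) restricting attention to $\interior(\mathcal{E})$, where $\psi$ is finite and the optimal sets are nonempty, since at $\boundary(\mathcal{E})$ the optimal sets may be empty or unbounded and no such claim is made, and (ii) the appeal to continuity of $\psi$, which is the sole place Assumption~\ref{IPC} enters the argument. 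Note that outer semicontinuity requires no boundedness of $\mathcal{P}^*$ or $\mathcal{D}^*$, so no compactness argument is needed at this stage.
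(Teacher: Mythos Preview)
Your proposal is correct. The paper itself does not spell out a proof at all: it simply notes that the result follows from \cite[Theorem~8]{Ho73b} under Assumption~\ref{IPC}. What you have written is a direct sequential verification of outer semicontinuity, using only closedness of $\mathcal{L}^{\bar{n}}_+$, continuity of the linear constraint maps, and continuity of $\psi$ on $\mathcal{E}$; you then also sketch the Hogan route as an alternative. Your direct argument is more self-contained and makes explicit exactly where Assumption~\ref{IPC} enters (through continuity of $\psi$), whereas the paper's citation packages the same ingredients into a general closedness theorem for solution-set mappings. There is no substantive difference in content, only in presentation; your version would serve equally well and arguably better for a reader without \cite{Ho73b} at hand.
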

 
\noindent
The optimal set mapping may fail to be inner semicontinuous relative to $\interior(\mathcal{E})$, e.g., when either the primal or the dual nondegeneracy condition fails at $\epsilon$, while the strict complementarity condition holds. Nevertheless, sufficient conditions can be given for the continuity of $\mathcal{P}^*(\epsilon)$ and $\mathcal{D}^*(\epsilon)$, regardless of the nondegeneracy conditions. First, it is easy to show, under Assumptions~\ref{LID} and~\ref{IPC}, that the optimal set mapping is \textit{locally bounded}%
\footnote{Here, local boundedness is equivalent to uniform compactness in~\cite{Ho73b}.} at $\bar{\epsilon} \in \interior(\mathcal{E})$~\cite[Lemma~3.2]{SW20}, i.e., there exist $\varsigma > 0$ and bounded sets $\mathcal{C}_1$ and $\mathcal{C}_2$ such that 
\begin{align}\label{optimal_set_locally_bounded}
\bigcup_{\epsilon \in (\bar{\epsilon}-\varsigma, \bar{\epsilon} + \varsigma)} \mathcal{P}^*(\epsilon) \subseteq \mathcal{C}_1 \quad \text{and} \quad \bigcup_{\epsilon \in (\bar{\epsilon}-\varsigma, \bar{\epsilon} + \varsigma)} \mathcal{D}^*(\epsilon) \subseteq \mathcal{C}_2.
\end{align}
Then the continuity follows from the uniqueness condition.
\begin{lemma}\label{continuity_sufficient_condition}
If $\mathcal{P}^*(\epsilon)$ is single-valued at $\bar{\epsilon} \in \interior(\mathcal{E})$, then $\mathcal{P}^*(\epsilon)$ is continuous at $\bar{\epsilon}$ relative to $\interior(\mathcal{E})$. An analogous result holds for $\mathcal{D}^*(\epsilon)$. 
\end{lemma}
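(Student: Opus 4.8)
The plan is to show that single-valuedness of $\mathcal{P}^*(\epsilon)$ together with the already-established outer semicontinuity (Lemma~\ref{optimal_set_mapping_outer_semicontinuity}) and local uniform boundedness forces inner semicontinuity at $\epsilon$, hence Painlev\'e-Kuratowski continuity. Write $\mathcal{P}^*(\epsilon)=\{\bar{x}\}$. By Lemma~\ref{optimal_set_mapping_outer_semicontinuity} we already have $\limsup_{\epsilon'\to\epsilon}\mathcal{P}^*(\epsilon')\subseteq\mathcal{P}^*(\epsilon)=\{\bar x\}$, so it remains to prove $\liminf_{\epsilon'\to\epsilon}\mathcal{P}^*(\epsilon')\supseteq\{\bar x\}$, i.e., for \emph{every} sequence $\epsilon_k\to\epsilon$ with $\epsilon_k\in\interior(\mathcal{E})$ there is a choice $x_k\in\mathcal{P}^*(\epsilon_k)$ with $x_k\to\bar x$.

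First I would fix such a sequence $\{\epsilon_k\}$; since $\epsilon\in\interior(\mathcal{E})$, for $k$ large $\epsilon_k$ lies in the interval $(\epsilon-\varsigma,\epsilon+\varsigma)$ on which the optimal set mapping is uniformly bounded by the compact set $\mathcal{C}$, so in particular each $\mathcal{P}^*(\epsilon_k)$ is nonempty (strong duality under Assumption~\ref{IPC}) and contained in a fixed compact set. Pick any $x_k\in\mathcal{P}^*(\epsilon_k)$. The key step is then a subsequence argument: to show $x_k\to\bar x$ it suffices to show every subsequence of $\{x_k\}$ has a further subsequence converging to $\bar x$. Given any subsequence indexed by $J\in\mathcal{J}_\infty$, uniform boundedness yields a further subsequence $J'\subseteq J$ with $x_k\to x^\dagger$ for some $x^\dagger$ as $k\in J'$; but then $x^\dagger\in\limsup_{\epsilon'\to\epsilon}\mathcal{P}^*(\epsilon')\subseteq\mathcal{P}^*(\epsilon)=\{\bar x\}$ by outer semicontinuity, so $x^\dagger=\bar x$. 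Hence the whole sequence $x_k\to\bar x$, giving $\bar x\in\liminf_{\epsilon'\to\epsilon}\mathcal{P}^*(\epsilon')$ and therefore the required inclusion. Combining with outer semicontinuity, $\mathcal{P}^*$ is Painlev\'e-Kuratowski continuous at $\epsilon$ relative to $\interior(\mathcal{E})$.

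The argument for $\mathcal{D}^*(\epsilon)$ is identical: replace $\mathcal{P}^*$ by $\mathcal{D}^*$, use the same compact set $\mathcal{C}$ (whose projection onto the $(y;s)$-coordinates is compact), invoke Lemma~\ref{optimal_set_mapping_outer_semicontinuity} for $\mathcal{D}^*$, and run the same subsequence extraction. I expect the only delicate point to be making sure $\mathcal{P}^*(\epsilon_k)$ is genuinely nonempty for all large $k$ so that the selection $x_k$ exists — this is exactly what Assumption~\ref{IPC} guarantees via strong duality on $\interior(\mathcal{E})$, together with the fact that $\epsilon_k\in\interior(\mathcal{E})$ eventually — and to note that compactness is essential only to extract the convergent further subsequence, while it is outer semicontinuity that pins the limit down to $\bar x$. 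No strict complementarity or nondegeneracy is used anywhere.
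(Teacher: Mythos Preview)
Your proof is correct and follows essentially the same approach as the paper: the paper's proof simply invokes outer semicontinuity (Lemma~\ref{optimal_set_mapping_outer_semicontinuity}), uniform boundedness of $\mathcal{P}^*(\epsilon)\times\mathcal{D}^*(\epsilon)$ near any $\epsilon\in\interior(\mathcal{E})$, and~\cite[Corollary~8.1]{Ho73b}, whereas you have spelled out the subsequence argument that underlies that cited corollary. The ingredients and logic are identical.
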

\begin{proof}
The proof is immediate from Lemma~\ref{optimal_set_mapping_outer_semicontinuity}, local boundedness of $\mathcal{P}^*(\epsilon)$ and $\mathcal{D}^*(\epsilon)$ at $\bar{\epsilon}$, and~\cite[Corollary~8.1]{Ho73b}. \qed
\end{proof}
Even though the primal or dual optimal set mapping is not necessarily inner semicontinuous relative to $\interior(\mathcal{E})$, the set of points at which $\mathcal{P}^*(\epsilon)$ and $\mathcal{D}^*(\epsilon)$ fail to be continuous relative to $\interior(\mathcal{E})$ is proven to be the union of countably many nowhere dense subsets of $\interior(\mathcal{E})$, i.e., it is of \textit{first category} in $\interior(\mathcal{E})$. This is the consequence of Lemma~\ref{optimal_set_mapping_outer_semicontinuity} and~\cite[Theorem~5.55]{Rock09}. Then the following result is in order.

\begin{proposition}[Theorem~1.3 in~\cite{Ox80}]
The set of points at which $\mathcal{P}^*(\epsilon)$ and $\mathcal{D}^*(\epsilon)$ are continuous is dense in $\interior(\mathcal{E})$. 
\end{proposition}

\begin{remark}
From this point on, unless stated otherwise, by the inner/outer semicontinuity of $\mathcal{P}^*(\epsilon)$ and $\mathcal{D}^*(\epsilon)$ at a given $\epsilon \in \interior(\mathcal{E})$ we mean inner/outer semicontinuity at $\epsilon$ relative to $\interior(\mathcal{E})$.
\end{remark}

\vspace{5px}
\noindent
The continuity results are used in Sections~\ref{sec:continuity_nonlinearity} and~\ref{nonlinearity_interval_strict_fails} for the identification of a nonlinearity interval and a transition point.

\section{Sensitivity of the optimal partition}\label{sec:sensitivity_optimal_partition}
In~\cite{MT20}, the notion of a nonlinearity interval and a transition point was formally introduced for the optimal partition of a parametric SDO problem. In this section, we introduce those notions for the optimal partition of $(\mathrm{P}_{\epsilon})-(\mathrm{D}_{\epsilon})$, which is defined on the basis of a different algebraic structure, see Section~\ref{conclusion}. From now on, the optimal partition of $(\mathrm{P}_{\epsilon})$ and $(\mathrm{D}_{\epsilon})$ at a given $\epsilon$ is denoted by $\pi(\epsilon)\!:=\!\big\{\mathcal{B}(\epsilon),\mathcal{N}(\epsilon),\mathcal{R}(\epsilon),\mathcal{T}(\epsilon)\big\}$. 

\subsection{Invariancy sets, nonlinearity intervals, and transition points}\label{sec:nonlinearity_transition}
For parametric LO and LCQO problems, the interval $\interior(\mathcal{E})$ can be entirely partitioned into invariancy sets, on which the optimal partition remains unchanged w.r.t. $\epsilon$~\cite{BJRT96,JRT92}. An invariancy set can be analogously defined for $(\mathrm{P}_{\epsilon})-(\mathrm{D}_{\epsilon})$. This definition is in accordance with~\cite[Section 4]{Yil04} for a linear conic optimization problem. 
\begin{definition}~\label{invariancy_interval}
An \textit{invariancy set} $\mathcal{E}_{\mathrm{inv}}$ is a maximal subset of $\interior(\mathcal{\mathcal{E}})$ such that $\pi(\epsilon')=\pi(\epsilon'')$ for all $\epsilon',\epsilon'' \in \mathcal{E}_{\mathrm{inv}}$, and the extreme rays $\mathbb{R}_+ x^{*i}(\epsilon)$ for $i \in \mathcal{R}(\epsilon) \cup \mathcal{T}_2(\epsilon)$ and $\mathbb{R}_+ s^{*i}(\epsilon)$ for $i \in \mathcal{R}(\epsilon) \cup \mathcal{T}_3(\epsilon)$ are invariant w.r.t. $\epsilon \in \mathcal{E}_{\mathrm{inv}}$, where $\big(x^*(\epsilon);y^*(\epsilon);s^*(\epsilon)\big)$ is any maximally complementary optimal solution at $\epsilon$. If it is not a singleton, then $\mathcal{E}_{\mathrm{inv}}$ is called an \textit{invariancy interval}.
\end{definition}
We remark here that the notion of an invariancy set is well-defined, i.e., it is independent of the choice of a maximally complementary optimal solution.
\begin{proposition}\label{invariancy_of_extreme_ray}
At a given $\epsilon$, let $\big(x^{*}(\epsilon);y^{*}(\epsilon);s^{*}(\epsilon)\big)$ and $\big(\check{x}^{*}(\epsilon);\check{y}^{*}(\epsilon);\check{s}^{*}(\epsilon)\big)$ be two arbitrary maximally complementary optimal solutions. Then it holds that 
\begin{align*}
\mathbb{R}_+ x^{*i}(\epsilon) &= \mathbb{R}_+ \check{x}^{*i}(\epsilon), & \forall i &\in \mathcal{R}(\epsilon) \cup \mathcal{T}_2(\epsilon),\\[-1\jot]
\mathbb{R}_+ s^{*i}(\epsilon) &= \mathbb{R}_+ \check{s}^{*i}(\epsilon), & \forall i &\in \mathcal{R}(\epsilon) \cup \mathcal{T}_3(\epsilon).
\end{align*}
\end{proposition}
\begin{proof}
If $\mathbb{R}_+ x^{*i}(\epsilon) \neq \mathbb{R}_+ \check{x}^{*i}(\epsilon)$ for an $i \in \mathcal{R}(\epsilon) \cup \mathcal{T}_2(\epsilon)$, then by the convexity of the optimal set and the triangle inequality we would have $\gamma x^{*}(\epsilon) + (1-\gamma) \check{x}^{*}(\epsilon) \in \mathcal{P}^*(\epsilon)$ for every $\gamma \in(0,1)$ such that $\gamma x^{*i}(\epsilon) + (1-\gamma) \check{x}^{*i}(\epsilon) \in \interior(\mathbb{L}^{n_i}_+)$. However, the latter would imply that $i \in \mathcal{B}(\epsilon)$, contradicting the assumption. The proof is analogous for an extreme ray $\mathbb{R}_+ s^{*i}(\epsilon)$. \qed
\end{proof}

\vspace{5px}
\noindent
As in parametric LO and SDO problems, an invariancy interval is open, and the primal optimal set is invariant w.r.t. $\epsilon$ on an invariancy interval. The proof is a word by word specialization from the SDO case, and is omitted for the sake of brevity, see e.g.,~\cite[Lemma~3.3 and Remark~3.1]{MT20} for details. The boundary points of an invariancy set can be obtained by solving a pair of auxiliary SOCO problems, see~\cite[Section 4]{Yil04}. Thus, a singleton invariancy set is identified when the boundary points from the auxiliary SOCO problems are identical.

\vspace{5px}
\noindent
It is easy to see that a singleton invariancy set $\{\bar{\epsilon}\}$ exists, i.e., when either the optimal partition $\pi(\epsilon)$, or the extreme rays $\mathbb{R}_+ x^{*i}(\epsilon)$ and $\mathbb{R}_+ s^{*i}(\epsilon)$ for some $i \in \mathcal{R}(\epsilon) \cup \mathcal{T}_2(\epsilon) \cup \mathcal{T}_3(\epsilon)$, or both change in every neighborhood of $\bar{\epsilon}$. However, unlike parametric LO and LCQO problems, infinitely many singleton invariancy sets may exist for $(\mathrm{P}_{\epsilon})-(\mathrm{D}_{\epsilon})$, as demonstrated by the following parametric problem%
\footnote{See~\cite[Example~3.1]{MT20} for an instance of a parametric SDO problem with infinitely many singleton invariancy sets.}:
\begin{equation}\label{ex:introductory_SOCO}
\begin{aligned}
\min \quad  &-\epsilon x^1_2 - (1-\epsilon) x^1_3\\[-1\jot]
\st \quad& x^1_1 = 1,\\[-1\jot]
&x^1_3 - x^2_1 = 0,\\[-1\jot]
&x^1_2-x^2_2 = 1,\\[-1\jot]
&x^1_1 \ge \sqrt{(x^1_2)^2 + (x^1_3)^2},\\[-1\jot]
&x^2_1 \ge |x^2_2|,
\end{aligned}
\end{equation}
where $\mathcal{E}=\mathbb{R}$. One can check that the optimal partition on $\mathbb{R}$ is given by
\begin{align*}
\big\{\mathcal{B}(\epsilon),\mathcal{N}(\epsilon),\mathcal{R}(\epsilon),\mathcal{T}(\epsilon)\big\}=\begin{cases} \big\{\emptyset,\emptyset,\{1,2\},\{\emptyset,\emptyset,\emptyset\}\big\}, \ \ &\epsilon \in (-\infty, 0),\\[-.5\jot]
\big\{\emptyset,\emptyset,\{1\},\{\emptyset,\{2\},\emptyset\}\big\},  &\epsilon = 0,\\[-.5\jot]
\big\{\{2\},\emptyset,\{1\},\{\emptyset,\emptyset,\emptyset\}\big\},  &\epsilon \in (0,1),\\[-.5\jot]
\big\{\emptyset,\emptyset,\{1\},\{\{2\},\emptyset,\emptyset\}\big\},  &\epsilon = 1,\\[-.5\jot]
\big\{\emptyset,\{2\},\{1\},\{\emptyset,\emptyset,\emptyset\}\big\},  &\epsilon \in (1,\infty),
\end{cases}
\end{align*}   
where
\begin{align*}
x^{*1}(\epsilon)=\begin{pmatrix} 1\\ \frac{\epsilon}{\sqrt{(1-\epsilon)^2+\epsilon^2}} \\ \frac{1-\epsilon}{\sqrt{(1-\epsilon)^2 + \epsilon^2}} \end{pmatrix}, \ s^{*1}(\epsilon)=\begin{pmatrix} \sqrt{(1-\epsilon)^2+\epsilon^2}\\ -\epsilon \\ \epsilon-1 \end{pmatrix}, \qquad \forall \epsilon \in (0,1). 
\end{align*}   
Notice that the extreme rays $\mathbb{R}_+ x^{*1}(\epsilon)$ and $\mathbb{R}_+ s^{*1}(\epsilon)$ vary continuously with $\epsilon$ while $\pi(\epsilon)$ is fixed on the interval $(0,1)$, see Figure~\ref{Circle_SOCO_example}. In this case, $(0,1)$ is called a nonlinearity interval, and $\{0,1\}$ denotes the set of transition points.

\begin{figure}
 \begin{minipage}[c]{0.60\textwidth}
\includegraphics[height=1.8in]{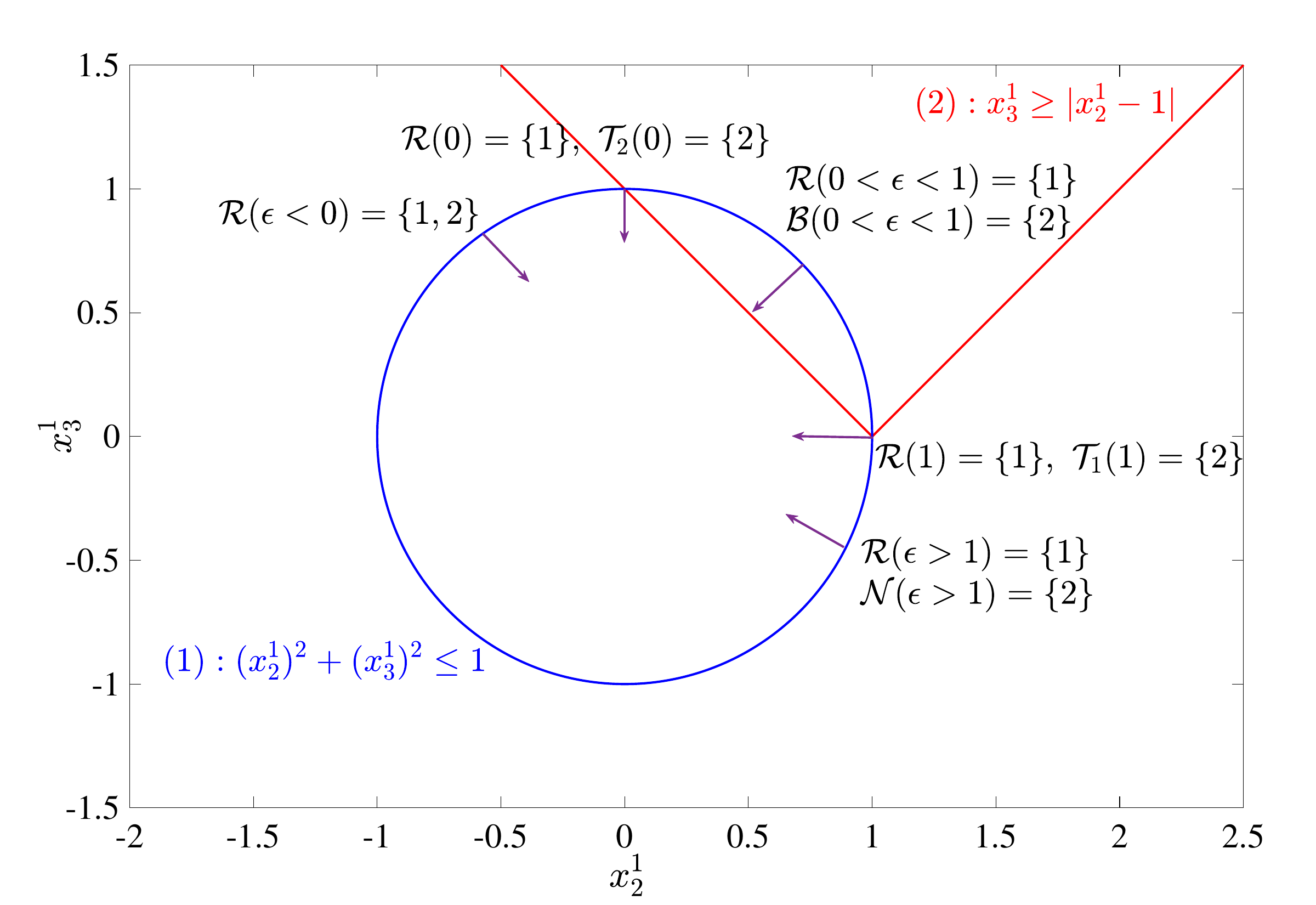}
\end{minipage}\hfill
\begin{minipage}[c]{0.4\textwidth}
\caption{The projection of the feasible region of~\eqref{ex:introductory_SOCO} onto $(x^1_2,x^1_3)$.}
\label{Circle_SOCO_example}
\end{minipage}
\end{figure}

\vspace{5px}
\noindent
Now, we formally define the notions of nonlinearity interval and transition point.
\begin{definition}\label{nonlinearity_SOCO}
A \textit{nonlinearity interval} is a non-singleton open maximal subinterval $\mathcal{E}_{\mathrm{non}}$ of $\interior(\mathcal{E})$ such that $\pi(\epsilon')=\pi(\epsilon'')$ for any two $\epsilon',\epsilon'' \in \mathcal{E}_{\mathrm{non}}$, while $\mathbb{R}_+ x^{*i}(\epsilon)$ for some $i \in \mathcal{R}(\epsilon) \cup \mathcal{T}_2(\epsilon)$ or $\mathbb{R}_+ s^{*i}(\epsilon)$ for some $i \in \mathcal{R}(\epsilon) \cup \mathcal{T}_3(\epsilon)$ varies with $\epsilon$. 
\end{definition} 
Notice that a nonlinearity interval is well-defined by Proposition~\ref{invariancy_of_extreme_ray}. Furthermore, the eigenstructures of $L\big(x^{*i}(\epsilon)\big)$ and $L\big(s^{*i}(\epsilon)\big)$, see~\eqref{rank_structure}, imply that ranks of $L\big(x^{*}(\epsilon)\big)$ and $L\big(s^{*}(\epsilon)\big)$ remain constant on a nonlinearity interval, where $L(x)$ and $L(s)$ are defined by~\eqref{blk_diag_arrow_hat}. Obviously, if $\mathcal{R}(\epsilon) = \mathcal{T}_2(\epsilon) = \mathcal{T}_3(\epsilon)=\emptyset$ on $\interior(\mathcal{E})$, then no nonlinearity interval exists.

\begin{definition}\label{transition_SOCO}
A singleton invariancy set $\{\bar{\epsilon}\} \in \interior(\mathcal{E})$ is called a \textit{transition point} if for every $\varsigma > 0$ there exists an $\epsilon \in (\bar{\epsilon}-\varsigma,\bar{\epsilon}+\varsigma) \subseteq \interior(\mathcal{E})$ such that $\pi(\bar{\epsilon}) \neq \pi(\epsilon)$. 
\end{definition}

\noindent
It can be deducted from Definitions~\ref{nonlinearity_SOCO} and~\ref{transition_SOCO} that a singleton invariancy set either belongs to a nonlinearity interval, or it is a transition point. Further, it immediately follows that the boundary points, in $\interior(\mathcal{E})$, of invariancy or nonlinearity intervals belong to the set of transition points. On the other hand, a \textit{semi-algebraic}~\cite[Page~57]{BPR06} formulation of the optimal set reveals that a transition point must be a boundary point, in $\interior(\mathcal{E})$, of an invariancy or a nonlinearity interval, as stated in Theorem~\ref{transition_point_finiteness}. As a result, one can partition $\interior(\mathcal{E})$ into the finite union of invariancy intervals, nonlinearity intervals, and transition points.

\begin{theorem}\label{transition_point_finiteness}
The set of transition points is finite.
\end{theorem}
\begin{proof}
Given a fixed optimal partition $\pi(\bar{\epsilon})$, the set of all $\epsilon$ with the optimal partition $\pi(\bar{\epsilon})$ can be formulated as 
\begin{align*}
S_{\pi(\bar{\epsilon})}\!:=\!\Big\{\epsilon \mid \exists \ (x;y;s) \in \ri\!\big(\mathcal{P}^*(\epsilon) \times \mathcal{D}^*(\epsilon)\big), \\[-1.5\jot]
(x_1^i)^2-\|x^i_{2:n_i}\|^2 &> 0, & i &\in \mathcal{B}(\bar{\epsilon}),\\[-1.5\jot]
(x_1^i)^2-\|x^i_{2:n_i}\|^2 &= 0, & i &\in  \mathcal{T}_2(\bar{\epsilon}),\\[-1.5\jot]
(s_1^i)^2-\|s^i_{2:n_i}\|^2 &>0, & i &\in \mathcal{N}(\bar{\epsilon}),\\[-1.5\jot]
(s_1^i)^2-\|s^i_{2:n_i}\|^2 &= 0, & i &\in  \mathcal{T}_3(\bar{\epsilon}),\\[-1.5\jot]
x_1^i &> 0, & i &\in \mathcal{R}(\bar{\epsilon}) \cup \mathcal{T}_2(\bar{\epsilon}),\\[-1.5\jot]
x^i &= 0, & i &\in \mathcal{T}_1(\bar{\epsilon}) \cup \mathcal{T}_3(\bar{\epsilon}),\\[-1.5\jot]
s_1^i &> 0, & i &\in \mathcal{R}(\bar{\epsilon}) \cup \mathcal{T}_3(\bar{\epsilon}),\\[-1.5\jot]
s^i &= 0, & i &\in \mathcal{T}_1(\bar{\epsilon}) \cup \mathcal{T}_2(\bar{\epsilon})\Big\}.
\end{align*} 
Observe that $S_{\pi(\bar{\epsilon})}$ is a semi-algebraic subset of $\mathbb{R}$, being the projection of a semi-algebraic set defined by a boolean combination of polynomial equalities and inequalities~\cite[Theorem~2.76]{BPR06}. Note that $S_{\pi(\bar{\epsilon})}$ might be empty or disconnected in $\mathbb{R}$. Since a semi-algebraic set has a finite number of connected components~\cite[Theorem~5.22]{BPR06}, and the boundary points, in $\interior(\mathcal{E})$, of the components are transition points, the set of transition points with a fixed optimal partition $\pi(\bar{\epsilon})$ is finite. Then the result follows by noting that $\pi(.)$ can only take a finite number of possibilities. \qed
\end{proof}

\vspace{5px}
\noindent
Notice that the connected components of $S_{\pi(\bar{\epsilon})}$ in Theorem~\ref{transition_point_finiteness} are either points or intervals~\cite[Corollary~2.79]{BPR06}: an isolated point in $S_{\pi(\bar{\epsilon})}$ is a transition point, whereas the interior of an interval is either an invariancy or a nonlinearity interval. Conversely, an invariancy interval with the optimal partition $\pi(\bar{\epsilon})$ is indeed an open connected component of $S_{\pi(\bar{\epsilon})}$ in $\mathbb{R}$. It is unknown, however, if the component containing a nonlinearity interval $\mathcal{E}_{\mathrm{non}}$ is open in $\mathbb{R}$. See ~\cite[Corollary~3.8]{MT20}, which can be also specialized for $(\mathrm{P}_{\epsilon})-(\mathrm{D}_{\epsilon})$.

\begin{remark}
In case that $\mathcal{R}(\epsilon) \cup \mathcal{T}_2(\epsilon) \cup \mathcal{T}_3(\epsilon) \neq \emptyset$, any two nonlinearity intervals or a nonlinearity interval and a transition point might have the same optimal partition. This is in contrast to parametric LO and LCQO problems where the invariancy sets are associated with distinct optimal partitions~\cite{BJRT96,JRT92}. For instance, consider the optimal partition of the following parametric SOCO problem:
\begin{equation}\label{optim_value_analytic}
\begin{aligned}
\min  \ \ &(1-2\epsilon) x^1_2 - x^1_3\\[-1\jot]
\st \ \  &x^1_1 = 1,\\[-1\jot]
&x^2_1 = 2,\\[-1\jot]
&x^2_2 - x^1_2 = 0,\\[-1\jot]
&x^2_3-2x^1_3 = 0,\\[-1\jot]
&x^1_1 \ge \sqrt{(x^1_2)^2 + (x^1_3)^2},\\[-1\jot]
&x^2_1 \ge \sqrt{(x^2_2)^2 + (x^2_3)^2},
\end{aligned}
\end{equation}
where the optimal set is given by
\begin{align*}
x^*(\epsilon)&=\begin{pmatrix} 1 \\ \frac{2\epsilon-1}{\sqrt{4\epsilon^2-4\epsilon+2}} \\ \frac{1}{\sqrt{4\epsilon^2-4\epsilon+2}} \\ 2 \\ \frac{2\epsilon-1}{\sqrt{4\epsilon^2-4\epsilon+2}} \\ \frac{2}{\sqrt{4\epsilon^2-4\epsilon+2}} \end{pmatrix}, & s^*(\epsilon)&=\begin{pmatrix} \sqrt{4\epsilon^2-4\epsilon+2} \\ 1-2\epsilon \\ -1 \\ 0 \\ 0 \\ 0 \end{pmatrix}, \quad  \epsilon \in (-\infty,\tfrac12) \cup \ (\tfrac12,\infty),\\
x^*(\tfrac12)&=\begin{pmatrix} 1, 0, 1, 2, 0, 2 \end{pmatrix}^T, & s^*(\tfrac12)&=\begin{pmatrix} \gamma, 0, -\gamma, \frac{1-\gamma}{2}, 0, \frac{\gamma-1}{2} \end{pmatrix}^T, \quad  \gamma \in [0,1].
\end{align*}
In this case, the two nonlinearity intervals $(-\infty,\frac12)$ and $(\frac12,\infty)$ with identical optimal partitions are separated by a transition point at $\epsilon = \frac12$, see Figure~\ref{Circle_Ellipse_example}.
\begin{figure}
\center
\includegraphics[height=1.8in]{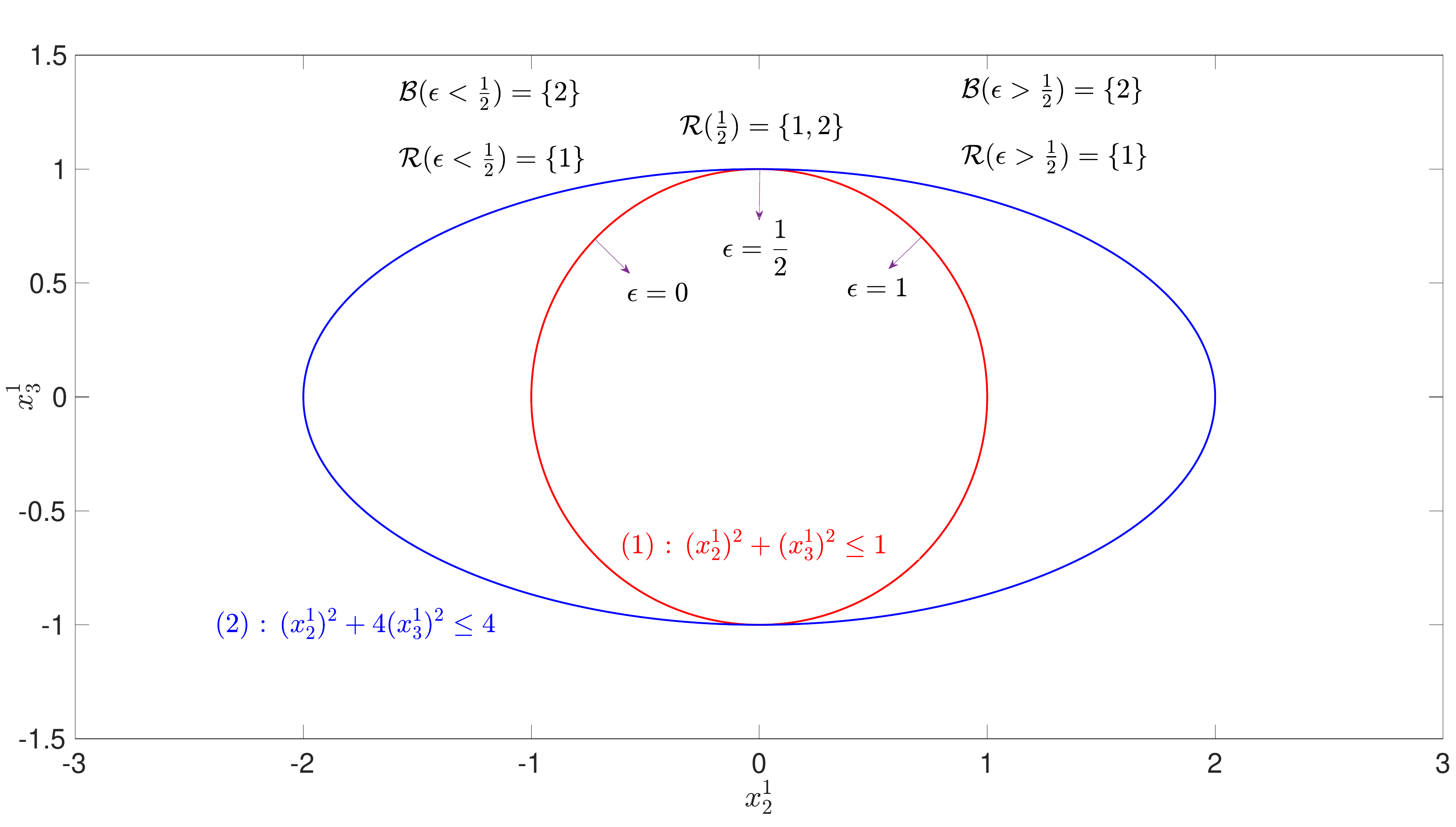}
\caption{The projection of the feasible region of~\eqref{optim_value_analytic} onto $(x^1_2,x^1_3)$.}
\label{Circle_Ellipse_example}
\end{figure}
\end{remark}
\setlength{\abovedisplayskip}{1pt}
\setlength{\belowdisplayskip}{1pt}
\subsection{On the identification of a nonlinearity interval}\label{sec:continuity_nonlinearity}
Recall from Definition~\ref{nonlinearity_SOCO} that both the primal and dual optimal sets vary with $\epsilon$ on a nonlinearity interval. As problem~\eqref{ex:introductory_SOCO} indicates%
\footnote{For this problem, both the primal and dual optimal set mappings are continuous at the transition point $\epsilon = 0$.}, solely continuity of the optimal set mapping at $\bar{\epsilon}$ does not induce the existence of a nonlinearity interval. The following result is important enough to be stated as a lemma.
\begin{lemma}\label{partition_inclusion_lemma}
Let both $\mathcal{P}^*(\epsilon)$ and $\mathcal{D}^*(\epsilon)$ be continuous at $\bar{\epsilon}$. Then, for all $\epsilon$ in a sufficiently small neighborhood of $\bar{\epsilon}$, we have
\begin{align}\label{inclusion_results}
\mathcal{B}(\bar{\epsilon}) \subseteq \mathcal{B}(\epsilon), \quad \mathcal{N}(\bar{\epsilon}) \subseteq \mathcal{N}(\epsilon), \quad \mathcal{R}(\bar{\epsilon}) \subseteq \mathcal{R}(\epsilon),
\end{align}
which also implies $\mathcal{T}(\epsilon) \subseteq \mathcal{T}(\bar{\epsilon})$. Furthermore, we have
\begin{align*}
\mathcal{T}_2(\bar{\epsilon}) \cap \big\{\mathcal{N}(\epsilon) \cup \mathcal{T}_1(\epsilon) \cup \mathcal{T}_3(\epsilon)\big\} &=\emptyset,\\[-1\jot]
 \mathcal{T}_3(\bar{\epsilon}) \cap \big\{\mathcal{B}(\epsilon) \cup \mathcal{T}_1(\epsilon) \cup \mathcal{T}_2(\epsilon)\big\}&=\emptyset.
\end{align*}
\end{lemma}
\begin{proof}
Let $\big(x^*(\bar{\epsilon});y^*(\bar{\epsilon});s^*(\bar{\epsilon})\big)$ be a maximally complementary optimal solution at $\bar{\epsilon}$ and $U \subseteq \mathbb{R}^{m+2\bar{n}}$ be a neighborhood of $\big(x^*(\bar{\epsilon});y^*(\bar{\epsilon});s^*(\bar{\epsilon})\big)$ in the Euclidean topology. Then $U$ can be made so small that for all $(x;y;s) \in U$ we have
\begin{align}\label{neighborhood_continuity_condition}
\begin{cases}
 x_1^i-\|x_{2:n_i}^i\| > 0,  &\forall \  i \in \mathcal{B}(\bar{\epsilon}),\\[-1\jot]
 s_1^i-\|s_{2:n_i}^i\| > 0,  &\forall \  i \in \mathcal{N}(\bar{\epsilon}),\\[-1\jot]
 x_1^i, s_1^i> 0,  &\forall \  i \in \mathcal{R}(\bar{\epsilon}),\\[-1\jot]
 x_1^i> 0,  &\forall \  i \in \mathcal{T}_2(\bar{\epsilon}),\\[-1\jot]
 s_1^i> 0,  &\forall \  i \in \mathcal{T}_3(\bar{\epsilon}).
 \end{cases}
\end{align} 
Now, by inner semicontinuity of $\mathcal{P}^*(\epsilon)$ and $\mathcal{D}^*(\epsilon)$ at $\bar{\epsilon}$~\cite[Theorem~3B.2(b)]{RD14}, there exist a neighborhood $V$ of $\bar{\epsilon}$ and an optimal $\big(x(\epsilon);y(\epsilon);s(\epsilon)\big) \in U$ for every $\epsilon \in V$. By the definition of the optimal partition, all this implies~\eqref{inclusion_results} and $\mathcal{T}(\epsilon) \subseteq \mathcal{T}(\bar{\epsilon})$. Finally, notice from~\eqref{neighborhood_continuity_condition} that $x_1^{i}(\epsilon) > 0$ for every $i \in \mathcal{T}_2(\bar{\epsilon})$ and every $\epsilon \in V$, while $x_1^{i}(\epsilon)$ must be $0$ for any $i \in \mathcal{N}(\epsilon) \cup \mathcal{T}_1(\epsilon) \cup \mathcal{T}_3(\epsilon)$. The same continuity argument suffices for $\mathcal{T}_3(\bar{\epsilon})$, which completes the proof. \qed
\end{proof} 

\vspace{5px}
\noindent
As a result, continuity in the presence of the strict complementarity condition becomes sufficient for the existence of a nonlinearity interval. 
\begin{theorem}\label{nonlinearity_continuity_SOCO}
Let $\{\bar{\epsilon}\}$ be a singleton invariancy set. If $\big(x^*(\bar{\epsilon});y^*(\bar{\epsilon});s^*(\bar{\epsilon})\big)$ is strictly complementary and both $\mathcal{P}^*(\epsilon)$ and $\mathcal{D}^*(\epsilon)$ are continuous at $\bar{\epsilon}$, then $\bar{\epsilon}$ belongs to a nonlinearity interval. 
\end{theorem}
\begin{proof}
Since $\mathcal{T}(\bar{\epsilon}) =\emptyset$, it follows from Lemma~\ref{partition_inclusion_lemma} that $\mathcal{T}(\epsilon) \subseteq \mathcal{T}(\bar{\epsilon}) = \emptyset$, and thus $\mathcal{B}(\bar{\epsilon}) = \mathcal{B}(\epsilon)$, $\mathcal{N}(\bar{\epsilon}) = \mathcal{N}(\epsilon)$, and $\mathcal{R}(\bar{\epsilon}) = \mathcal{R}(\epsilon)$ for all $\epsilon$ in a sufficiently small neighborhood of $\bar{\epsilon}$. Since $\{\bar{\epsilon}\}$ is a singleton invariancy set, it must belong to a nonlinearity interval.  \qed
\end{proof}

\vspace{5px}
\noindent
Theorem~\ref{nonlinearity_continuity_SOCO} does not yield a complete characterization for the existence of a nonlinearity interval, because either strict complementarity or continuity might fail on a nonlinearity interval. All this makes the identification and computation of a nonlinearity interval a nontrivial task. For instance, $\mathcal{P}^*(\epsilon)$ fails to be continuous on a nonlinearity interval of the following parametric SOCO problem:
  
\begin{equation}\label{SOCO_counterexample}
\begin{aligned}
\min \qquad &  (\tfrac12 - \tfrac12\epsilon) x_2^1 + (\epsilon - \tfrac 12) x_1^2 -\tfrac 12 \epsilon x_2^2 + (\epsilon-\tfrac12) x_3^2\\[-1\jot]
\st \qquad   &x^1_1+x_1^2 = 4,\\[-1\jot]
&x_3^1 + x_3^2 = 0,\\[-1\jot]
&x^1_1 \ge \sqrt{(x_2^1)^2 + (x_3^1)^2},\\[-1\jot]
&x_1^2 \ge \sqrt{(x_2^2)^2 + (x_3^2)^2},
\end{aligned}
\end{equation}
\noindent
where $\mathcal{E}=\mathbb{R}$. On the interval $(-\frac12,\frac32)$, a strictly complementary optimal solution is given by
\begin{align*}
x^*(\epsilon)=\begin{pmatrix}4\epsilon^3-6\epsilon^2+\epsilon+\frac52\\ 4\epsilon^2-2\epsilon-2\\-4\epsilon^3+6\epsilon^2+\epsilon-\frac32\\ -4\epsilon^3+6\epsilon^2-\epsilon+\frac32\\ 6\epsilon-4\epsilon^2\\ 4\epsilon^3-6\epsilon^2-\epsilon+\frac32\end{pmatrix}, \quad s^*(\epsilon)=\begin{pmatrix} \frac12 \epsilon^2 - \epsilon + \frac58\\ \frac12 - \frac12 \epsilon\\ \frac12\epsilon^2-\epsilon+\frac38\\ \frac12 \epsilon^2 + \frac18\\ -\frac12 \epsilon \\ \frac12 \epsilon^2-\frac18\end{pmatrix},
\end{align*}
which is the unique optimal solution for every $\epsilon \in (-\frac12,\frac32)\setminus\{\frac12\}$ and has the optimal value $\psi(\epsilon)=-2\epsilon^2+4\epsilon - \frac52$. One can verify that $(-\frac12,\frac32)$ is a nonlinearity interval, while the primal optimal set mapping fails to be continuous at $\epsilon=\frac12$. 

\paragraph{\textbf{A numerical procedure}}
If the Jacobian~\eqref{Jacobian_SOCO} is nonsingular at a singleton invariancy set $\{\bar{\epsilon}\}$, then the existence of a nonlinearity interval around $\bar{\epsilon}$ follows from the implicit function theorem~\cite[Theorem~10.2.1]{D60}. We refer the reader to~\cite[Lemma~3.9]{MT20} and its subsequent discussion for an analogous proof. In general, however, the Jacobian of the optimality conditions might be singular on a given subinterval of $\interior(\mathcal{E})$, see e.g., the interval $[1,\infty)$ in the parametric SOCO problem~\eqref{ex:introductory_SOCO}. Even the continuity condition of Theorem~\ref{nonlinearity_continuity_SOCO} may either fail to exist or may be impossible to check at a given point $\bar{\epsilon}$. On the other hand, since the transition points are isolated in $\interior(\mathcal{E})$, see Theorem~\ref{transition_point_finiteness}, and a coordinate of an optimal solution could be doubly exponentially small~\cite[Example~3.2]{MT19}, it may be impractical to obtain the desired nonlinearity interval by simply solving $(\mathrm{P}_{\epsilon})$ and $(\mathrm{D}_{\epsilon})$ at arbitrary points of $\interior(\mathcal{E})$. See also problem~\eqref{SOCO_R_T3} in Section~\ref{transition_point_experiments}, where a boundary point of a nonlinearity interval is irrational.

\vspace{5px}
\noindent
Under strict complementarity condition, we present a numerical procedure to compute a nonlinearity interval in $\interior(\mathcal{E})$. The procedure starts from a singleton invariancy set%
\footnote{For the ease of exposition, we simply rule out the existence of an invariancy interval here. Otherwise, as indicated in Remark~\ref{SDO_SOCO_nonlinearity_alternative}, this procedure can also be applied to invariancy intervals.} $\{\bar{\epsilon}\}$ and a given strictly complementary optimal solution with the goal to find the nonlinearity interval surrounding $\bar{\epsilon}$. The procedure iteratively generates a sequence of subintervals of $\interior(\mathcal{E})$ by solving a pair of nonlinear auxiliary problems. A method of real algebraic geometry is invoked in order to check the existence of a transition point in the given subinterval.

\vspace{5px}
\noindent
The nonlinear auxiliary problems in the above procedure are defined locally w.r.t. a given strictly complementary optimal solution $\big(x^*(\bar{\epsilon});y^*(\bar{\epsilon});s^*(\bar{\epsilon})\big)$. Let us define 
\begin{align*}
\delta_{\mathcal{B}(\bar{\epsilon})}&\!:=\!\frac{\sqrt{2}}{2}\min_{i \in \mathcal{B}(\bar{\epsilon})}\!\big\{x^{*i}_1(\bar{\epsilon})-\|x^{*i}_{2:n_i}(\bar{\epsilon})\|\big\},\\[-1\jot]
\delta_{\mathcal{N}(\bar{\epsilon})}&\!:=\!\frac{\sqrt{2}}{2}\min_{i \in \mathcal{N}(\bar{\epsilon})}\!\big\{s^{*i}_1(\bar{\epsilon})-\|s^{*i}_{2:n_i}(\bar{\epsilon})\|\big\},\\[-1\jot]
\delta_{\mathcal{R}(\bar{\epsilon})}&\!:=\!\min\!\Big \{\min_{i \in \mathcal{R}(\bar{\epsilon})}\! \big\{x^{*i}_1(\bar{\epsilon})\big\},  \min_{i \in \mathcal{R}(\bar{\epsilon})}\! \big\{s^{*i}_1(\bar{\epsilon})\big\} \Big \},\\[-1\jot]
\delta(\bar{\epsilon})&\!:=\!\min\!\big\{\delta_{\mathcal{B}(\bar{\epsilon})},\delta_{\mathcal{N}(\bar{\epsilon})},\delta_{\mathcal{R}(\bar{\epsilon})}\big\},
\end{align*}
and for a given $\delta \ge 0$ let a semi-algebraic set $\mathcal{S}(\delta,\bar{\epsilon})$ be defined by   
\begin{equation}\label{semi_algebraic_strict}
\begin{aligned}
\mathcal{S}(\delta,\bar{\epsilon})\!:=\!
\big\{\epsilon \mid \exists \ (x;y;s) \ \st \ A x = b, \ 
A^T y + s  = c + \epsilon \bar{c},\ x \circ s &= 0,\\ 
 \|(x-x^*(\bar{\epsilon});y-y^*(\bar{\epsilon});s-s^*(\bar{\epsilon}))\|^2 &\le \delta^2\big\},
\end{aligned}
\end{equation}
which is nonempty, and it has a finite number of connected components, see Theorem~\ref{transition_point_finiteness}. For every $\epsilon \in \mathcal{S}(\delta,\bar{\epsilon})$ there exists $\big(\tilde{x}(\epsilon);\tilde{y}(\epsilon);\tilde{s}(\epsilon)\big)$ such that
\begin{align*}
A^T\big(\tilde{y}(\epsilon)-y^*(\epsilon)\big) + \big(\tilde{s}(\epsilon)-s^*(\epsilon)\big) = (\epsilon-\bar{\epsilon})\bar{c},
\end{align*} 
which in turn implies
\begin{align}\label{bound_on_epsilon}
|\epsilon-\bar{\epsilon}|=\frac{\big\|A^T\big(\tilde{y}(\epsilon)-y^*(\epsilon)\big) + \big(\tilde{s}(\epsilon)-s^*(\epsilon)\big)\big\|}{\|\bar{c}\|} \le \frac{\sqrt{2}\max\{\|A\|,1\}\delta}{\|\bar{c}\|},
\end{align} 
where $\|A\|$ is the spectral norm of $A$. Therefore, $\mathcal{S}(\delta,\bar{\epsilon})$ is compact for every fixed $\delta \ge 0$, being the projection of a compact subset of $\mathbb{R}^{m+2\bar{n}+1}$. Furthermore, for any non-increasing sequence $\delta_k \downarrow 0$, we have a nested sequence of nonempty sets $\mathcal{S}(\delta_k,\bar{\epsilon})$ such that $\mathcal{S}(\delta_k,\bar{\epsilon}) \supseteq \mathcal{S}(\delta_{k+1},\bar{\epsilon})$ for all $k$.
\begin{lemma}\label{distance_cond}
Let $\big(x^*(\bar{\epsilon});y^*(\bar{\epsilon});s^*(\bar{\epsilon})\big)$ be a strictly complementary optimal solution. If $0 \le \delta < \delta(\bar{\epsilon})$, then we have $\pi(\epsilon)=\pi(\bar{\epsilon})$ for every $\epsilon \in \mathcal{S}(\delta,\bar{\epsilon})$.
\end{lemma}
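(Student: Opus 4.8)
The plan is to fix an arbitrary $\epsilon\in\mathcal{S}(\delta,\bar{\epsilon})\cap\interior(\mathcal{E})$, take a triple $(x;y;s)$ certifying $\epsilon\in\mathcal{S}(\delta,\bar{\epsilon})$, and prove in one stroke that $(x;y;s)$ is a primal-dual optimal solution at $\epsilon$ and that it realizes the same optimal partition as $\bar{\epsilon}$. Two standing facts set the stage. First, since $\big(x^*(\bar{\epsilon});y^*(\bar{\epsilon});s^*(\bar{\epsilon})\big)$ is strictly complementary, $\mathcal{T}(\bar{\epsilon})=\emptyset$, so $\{1,\dots,p\}=\mathcal{B}(\bar{\epsilon})\cup\mathcal{N}(\bar{\epsilon})\cup\mathcal{R}(\bar{\epsilon})$ is a disjoint union. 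Second, a strictly complementary solution is maximally complementary (Definition~\ref{strict}), so $x^{*i}_1(\bar{\epsilon})>\|x^{*i}_{2:n_i}(\bar{\epsilon})\|$ for $i\in\mathcal{B}(\bar{\epsilon})$, $s^{*i}_1(\bar{\epsilon})>\|s^{*i}_{2:n_i}(\bar{\epsilon})\|$ for $i\in\mathcal{N}(\bar{\epsilon})$, and $x^{*i}_1(\bar{\epsilon})>0$, $s^{*i}_1(\bar{\epsilon})>0$ for $i\in\mathcal{R}(\bar{\epsilon})$. Everything then follows from the two distance bounds $\|x-x^*(\bar{\epsilon})\|\le\delta$, $\|s-s^*(\bar{\epsilon})\|\le\delta$ together with $\delta<\delta(\bar{\epsilon})$, argued block by block over $i=1,\dots,p$.

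For $i\in\mathcal{B}(\bar{\epsilon})$ I would use the elementary inequality $|a^i_1|+\|a^i_{2:n_i}\|\le\sqrt{2}\,\|a^i\|$ with $a:=x-x^*(\bar{\epsilon})$ — this is precisely the origin of the factor $\tfrac{\sqrt{2}}{2}$ in $\delta_{\mathcal{B}(\bar{\epsilon})}$ and $\delta_{\mathcal{N}(\bar{\epsilon})}$ — to obtain
\[
x^i_1-\|x^i_{2:n_i}\|\ \ge\ \big(x^{*i}_1(\bar{\epsilon})-\|x^{*i}_{2:n_i}(\bar{\epsilon})\|\big)-\sqrt{2}\,\delta\ >\ 0,
\]
so $x^i\in\interior(\mathbb{L}^{n_i}_+)$; then $L(x^i)$ is nonsingular and $L(x^i)s^i=0$ forces $s^i=0$. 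Symmetrically, $\delta<\delta_{\mathcal{N}(\bar{\epsilon})}$ gives $s^i\in\interior(\mathbb{L}^{n_i}_+)$ and $x^i=0$ for $i\in\mathcal{N}(\bar{\epsilon})$. For $i\in\mathcal{R}(\bar{\epsilon})$, $\delta<\delta_{\mathcal{R}(\bar{\epsilon})}$ yields $x^i_1\ge x^{*i}_1(\bar{\epsilon})-\delta>0$ and $s^i_1\ge s^{*i}_1(\bar{\epsilon})-\delta>0$; substituting the lower block of $L(x^i)s^i=0$, namely $s^i_{2:n_i}=-(s^i_1/x^i_1)\,x^i_{2:n_i}$, into the top component $x^i_1 s^i_1+(x^i_{2:n_i})^T s^i_{2:n_i}=0$ and cancelling $s^i_1>0$ gives $(x^i_1)^2=\|x^i_{2:n_i}\|^2$, hence $x^i_1=\|x^i_{2:n_i}\|>0$ and likewise $s^i_1=\|s^i_{2:n_i}\|>0$. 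In every case $x^i,s^i\in\mathbb{L}^{n_i}_+$, so $x,s\in\mathcal{L}^{\bar{n}}_+$ and $(x;y;s)$ solves the optimality system~\eqref{KKT_SOCO}; since $\epsilon\in\interior(\mathcal{E})$ strong duality holds, whence $x\in\mathcal{P}^*(\epsilon)$ and $(y;s)\in\mathcal{D}^*(\epsilon)$.

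To finish, I would read off $\mathcal{B}(\bar{\epsilon})\subseteq\mathcal{B}(\epsilon)$, $\mathcal{N}(\bar{\epsilon})\subseteq\mathcal{N}(\epsilon)$ and $\mathcal{R}(\bar{\epsilon})\subseteq\mathcal{R}(\epsilon)$ directly from the three cases, the specific optimal solution $(x;y;s)$ serving as the witness required by the ``for some'' clauses in the definitions of $\mathcal{B}(\epsilon),\mathcal{N}(\epsilon),\mathcal{R}(\epsilon)$. Because $\mathcal{B}(\bar{\epsilon})\cup\mathcal{N}(\bar{\epsilon})\cup\mathcal{R}(\bar{\epsilon})=\{1,\dots,p\}$ while $\mathcal{B}(\epsilon),\mathcal{N}(\epsilon),\mathcal{R}(\epsilon),\mathcal{T}(\epsilon)$ are pairwise disjoint with union $\{1,\dots,p\}$, these three inclusions are forced to be equalities and $\mathcal{T}(\epsilon)=\emptyset$, i.e. $\pi(\epsilon)=\pi(\bar{\epsilon})$. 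The delicate step that I expect to be the real obstacle is establishing that the witness $(x;y;s)$ actually lies in $\mathcal{L}^{\bar{n}}_+$ — and is therefore optimal — since $\mathcal{S}(\delta,\bar{\epsilon})$ only encodes the affine feasibility equations and $x\circ s=0$ without the conic constraints; the resolution is exactly the block-by-block computation above, in which the very threshold $\delta<\delta(\bar{\epsilon})$ that keeps the partition frozen also drives $x^i$ (resp.\ $s^i$) back into $\mathbb{L}^{n_i}_+$ through the Jordan identity $L(x^i)s^i=0$.
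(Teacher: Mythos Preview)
Your argument is correct and follows essentially the same route as the paper: both proofs take a witness $(x;y;s)$ from $\mathcal{S}(\delta,\bar\epsilon)$ and verify, block by block over $\mathcal{B}(\bar\epsilon),\mathcal{N}(\bar\epsilon),\mathcal{R}(\bar\epsilon)$, that the distance bounds with $\delta<\delta(\bar\epsilon)$ force the same face pattern, using the Jordan identity $x^i\circ s^i=0$ for the $\mathcal{R}(\bar\epsilon)$ case. Your write-up is in fact a bit more explicit than the paper's on two points the paper leaves implicit: first, you spell out that $\mathcal{S}(\delta,\bar\epsilon)$ omits the conic constraints and show how the block computations recover $x,s\in\mathcal{L}^{\bar n}_+$ (in particular deducing $s^i=0$ for $i\in\mathcal{B}(\bar\epsilon)$ from the nonsingularity of $L(x^i)$); second, you close with the clean pigeonhole step turning the three inclusions into equalities, whereas the paper simply asserts that the witness is strictly complementary and hence $\pi(\epsilon)=\pi(\bar\epsilon)$.
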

\begin{proof}
The inequality constraint in~\eqref{semi_algebraic_strict} ensures that the strict complementarity condition holds at every $\epsilon \in \mathcal{S}(\delta,\bar{\epsilon})$. To see this, for the given $\epsilon$, let $\big(\tilde{x}(\epsilon);\tilde{y}(\epsilon);\tilde{s}(\epsilon)\big)$ be a solution which satisfies the constraints in~\eqref{semi_algebraic_strict}. Then we have 
\begin{align*}
\big|\tilde{x}_1^i(\epsilon) - x^{*i}_1(\bar{\epsilon})\big| + \big \|\tilde{x}_{2:n_i}^i(\epsilon) - x^{*i}_{2:n_i}(\bar{\epsilon}) \big \|  \le \sqrt{2} \|\tilde{x}^i(\epsilon) - x^{*i}(\bar{\epsilon})\|,
\end{align*}
which, together with $\|\tilde{x}^i(\epsilon) - x^{*i}(\bar{\epsilon})\| \le \delta$ for every $i \in \mathcal{B}(\bar{\epsilon})$, gives
\begin{align*}
\|\tilde{x}_{2:n_i}^i(\epsilon)\| - \|x^{*i}_{2:n_i}(\bar{\epsilon})\| -\tilde{x}_1^i(\epsilon) + x^{*i}_1(\bar{\epsilon}) \le\sqrt{2}\|\tilde{x}^i(\epsilon) - x^{*i}(\bar{\epsilon})\| \le \sqrt{2}\delta.
\end{align*}
Consequently, 
\begin{align}
0 < x^{*i}_1(\bar{\epsilon})  - \|x^{*i}_{2:n_i}(\bar{\epsilon})\| - \sqrt{2}\delta  \le \tilde{x}_1^i(\epsilon) - \|\tilde{x}_{2:n_i}^i(\epsilon)\|, \quad\quad i \in \mathcal{B}(\bar{\epsilon}), \label{positive_B}
\end{align}
which, by the complementarity condition, yields $\tilde{s}^i(\epsilon)=0$ for every $i \in \mathcal{B}(\bar{\epsilon})$. Analogously, we can show that $\tilde{s}_1^i(\epsilon) - \|\tilde{s}_{2:n_i}^i(\epsilon)\| > 0$ and thus $\tilde{x}^i(\epsilon)=0$ for every $i \in \mathcal{N}(\bar{\epsilon})$. For any given $i \in \mathcal{R}(\bar{\epsilon})$ we can derive 
\begin{align}
|\tilde{x}_1^i(\epsilon) - x^{*i}_1(\bar{\epsilon})| &\le \|\tilde{x}^i(\epsilon) - x^{*i}(\bar{\epsilon})\| \le \delta  &\Longrightarrow \ &\tilde{x}_1^i(\epsilon) \ge x^{*i}_1(\bar{\epsilon}) - \delta > 0, \label{positive_Rx}\\[-1\jot]
|\tilde{s}_1^i(\epsilon) - s^{*i}_1(\bar{\epsilon})| &\le \|\tilde{s}^i(\epsilon) - s^{*i}(\bar{\epsilon})\| \le \delta   &\Longrightarrow \ &\tilde{s}_1^i(\epsilon) \ge s^{*i}_1(\bar{\epsilon}) - \delta > 0. \label{positive_Rs}
\end{align}
Finally, it follows from $\tilde{x}^i(\epsilon) \circ \tilde{s}^i(\epsilon) = 0$ and~\eqref{Jordan_prod} that
\begin{align*}
0=\tilde{x}^i(\epsilon)^T \tilde{s}^i(\epsilon) =\tilde{x}^i_1(\epsilon) \tilde{s}^i_1(\epsilon) + \big(\tilde{x}^i_{2:n_i}(\epsilon)\big)^T \tilde{s}^i_{2:n_i}(\epsilon) &= \tilde{x}^i_1(\epsilon) \tilde{s}^i_1(\epsilon) - \frac{\tilde{s}^i_1(\epsilon)\|\tilde{x}^i_{2:n_i}(\epsilon)\|^2}{\tilde{x}_1^i(\epsilon)} \\
&= \frac{\tilde{s}_1^i(\epsilon)\big((\tilde{x}_1^i(\epsilon))^2 - \|\tilde{x}^i_{2:n_i}(\epsilon)\|^2\big)}{\tilde{x}_1^i(\epsilon)},
\end{align*}
which, by~\eqref{positive_Rx} and~\eqref{positive_Rs}, yields $\tilde{x}_1^i(\epsilon)-\|\tilde{x}_{2:n_i}^i(\epsilon)\| = 0$, and analogously, $\tilde{s}_1^i(\epsilon)-\|\tilde{s}_{2:n_i}^i(\epsilon)\| = 0$ for every $i \in \mathcal{R}(\bar{\epsilon})$. Since $\mathcal{B}(\bar{\epsilon}) \cup \mathcal{N}(\bar{\epsilon}) \cup \mathcal{R}(\bar{\epsilon})=\{1,\ldots,p\}$, it immediately follows from~\eqref{positive_B},~\eqref{positive_Rx}, and~\eqref{positive_Rs} that $\big(\tilde{x}(\epsilon);\tilde{y}(\epsilon);\tilde{s}(\epsilon)\big)$ is an optimal solution of $(\mathrm{P}_{\epsilon})-(\mathrm{D}_{\epsilon})$ such that $\mathcal{B}(\bar{\epsilon}) \subseteq \mathcal{B}(\epsilon)$, $\mathcal{N}(\bar{\epsilon}) \subseteq \mathcal{N}(\epsilon)$, and $\mathcal{R}(\bar{\epsilon}) \subseteq \mathcal{R}(\epsilon)$. Using the latter inclusions and $\mathcal{B}(\bar{\epsilon}) \cup \mathcal{N}(\bar{\epsilon}) \cup \mathcal{R}(\bar{\epsilon})=\{1,\ldots,p\}$ again, we can conclude that $\pi(\epsilon)=\pi(\bar{\epsilon})$ for every $\epsilon \in \mathcal{S}(\delta,\bar{\epsilon})$. This completes the proof. \qed
\end{proof}

\vspace{5px}
\noindent
Given a strictly complementary optimal solution $\big(x^*(\bar{\epsilon});y^*(\bar{\epsilon});s^*(\bar{\epsilon})\big)$, the idea is to explore the semi-algebraic set $S_{\pi(\bar{\epsilon})}$ by solving the following nonlinear auxiliary problems
\begin{equation}\label{auxiliary_nonlinear_lower}
\begin{aligned}
\alpha(\delta)\big(\beta(\delta)\big)\!:=\!\min(\max) \ &\epsilon \\[-1\jot]
\st \quad  &Ax = b,\\[-1\jot]
&A^T y + s  = c + \epsilon \bar{c},\\[-1\jot]
&x \circ s = 0,\\[-1\jot]
&\|(x-x^*(\bar{\epsilon});y-y^*(\bar{\epsilon});s-s^*(\bar{\epsilon}))\|^2 \le \delta^2,
\end{aligned}
\end{equation}
where both $\alpha(\delta)$ and $\beta(\delta)$ are attained by the compactness of $\mathcal{S}(\delta,\bar{\epsilon})$. The following theorem proves the correctness of our procedure. 
\begin{theorem}\label{semi_algebraic_finiteness}
For all sufficiently small $\delta>0$, $\big[\alpha(\delta),\beta(\delta)\big] \cap S_{\pi(\bar{\epsilon})}$ is either a singleton or a subinterval of $S_{\pi(\bar{\epsilon})}$ containing $\bar{\epsilon}$.
\end{theorem}
\begin{proof} 
Recall from the definition of $S_{\pi(\bar{\epsilon})}$ in Theorem~\ref{transition_point_finiteness} and Lemma~\ref{distance_cond} that 
\begin{align*} 
\mathcal{S}(\delta,\bar{\epsilon}) \subseteq S_{\pi(\bar{\epsilon})} \qquad \forall \ 0 \le \delta < \delta(\bar{\epsilon}),
\end{align*}
which, by~\eqref{auxiliary_nonlinear_lower} and the compactness of $\mathcal{S}(\delta,\bar{\epsilon})$, implies that $\alpha(\delta),\beta(\delta) \in S_{\pi(\bar{\epsilon})}$. Furthermore, the inequality~\eqref{bound_on_epsilon} indicates that the length of $\big[\alpha(\delta),\beta(\delta)\big]$ can be controlled by using a suitable choice of $\delta > 0$. Then the result follows from the finiteness of the number of connected components of $S_{\pi(\bar{\epsilon})}$, see Theorem~\ref{transition_point_finiteness}, and the fact that $\bar{\epsilon} \in [\alpha(\delta),\beta(\delta)]$ for every $\delta \ge 0$: 
\begin{enumerate}
\item If $\alpha(\delta')=\beta(\delta')$ for some $\delta' > 0$, then the case is trivial because then $\mathcal{S}(\delta,\bar{\epsilon})$ must be a singleton for all $0 \le \delta \le \delta'$;
\item Otherwise, suppose that for every $\delta_k > 0$ in a decreasing sequence $\delta_k \downarrow 0$, we have $\alpha(\delta_k) \neq \beta(\delta_k)$ and there exists $\epsilon_k \in \big(\alpha(\delta_k),\beta(\delta_k)\big) \setminus S_{\pi(\bar{\epsilon})}$, which also implies that $\epsilon_k \neq \bar{\epsilon}$. Assume w.l.o.g. that $\beta(\delta_k)$ is constant over $k$. Then the number of connected components of $S_{\pi(\bar{\epsilon})}$ must be at least 2, because $\alpha(\delta_k),\beta(\delta_k) \in S_{\pi(\bar{\epsilon})}$ and $\alpha(\delta_k) < \epsilon_k < \beta(\delta_k)$ while $\epsilon_k \not \in S_{\pi(\bar{\epsilon})}$. Furthermore, since $\epsilon_k \neq \bar{\epsilon}$, by the inequality~\eqref{bound_on_epsilon} we can choose a sufficiently large $k' > k$ such that $\alpha(\delta_{k'}) > \epsilon_k$, and thus $\epsilon_k \not \in \big[\alpha(\delta_{k'}),\beta(\delta_{k'})\big]$. Using the assumption once again, there must exist $\epsilon_{k'} \in \big(\alpha(\delta_{k'}),\beta(\delta_{k'})\big) \setminus S_{\pi(\bar{\epsilon})}$, which in turn yields a lower bound 3 for the number of connected components of $S_{\pi(\bar{\epsilon})}$. Since this process can be continued infinitely many times, it leads to an infinite number of connected components for $S_{\pi(\bar{\epsilon})}$, which, by Theorem~\ref{transition_point_finiteness}, is a contradiction. \qed
\end{enumerate}
\end{proof}
\noindent
Theorem~\ref{nonlinearity_continuity_SOCO} guarantees $\alpha(\delta) < \bar{\epsilon} < \beta(\delta)$ by requiring the continuity of the optimal set mapping at $\bar{\epsilon}$. The following result is then immediate.

\begin{corollary}\label{corollary_singleton}
Assume that the strict complementarity condition holds at $\bar{\epsilon} \in \interior(\mathcal{E})$. If either $\alpha(\delta) = \bar{\epsilon}$, or $\beta(\delta)=\bar{\epsilon}$, or both holds for some $\delta > 0$, then the optimal set mapping fails to be continuous at $\bar{\epsilon}$.
\end{corollary}

\vspace{5px}
\noindent
Consequently, by Theorem~\ref{semi_algebraic_finiteness} and Corollary~\ref{corollary_singleton}, a positive sufficiently small $\delta$ allows us to decide whether $\bar{\epsilon}$ belongs to a nonlinearity interval, or it is a singleton invariancy set at which the optimal set mapping fails to be continuous. Note that the later case would not necessarily lead to the existence of a transition point, as demonstrated by problem~\eqref{SOCO_counterexample}.

\paragraph{Outline of the numerical procedure}
Based on auxiliary problems~\eqref{auxiliary_nonlinear_lower} and the above description, Algorithm~\ref{alg:nonlinearity_computation} presents the outline of our numerical procedure for the computation of a nonlinearity interval. Given the singleton invariancy set $\{\bar{\epsilon}\}$ at which the strict complementarity condition holds, Algorithm~\ref{alg:nonlinearity_computation} tracks forwards and backwards by iteratively solving auxiliary problems~\eqref{auxiliary_nonlinear_lower}. The procedure stops only when $\alpha_k$ and $\alpha_{k-1}$ (or $\beta_k$ and $\beta_{k-1}$) from two consecutive steps are sufficiently close. The connectivity of the subintervals generated by Algorithm~\ref{alg:nonlinearity_computation} is checked by using the quantifier elimination algorithm~\cite[Algorithm~14.5]{BPR06}. We employ the quantifier elimination algorithm to compute the \textit{quantifier free formula}~\cite[Theorem~2.77]{BPR06} whose solution set equals $S_{\pi(\bar{\epsilon})}$. We then invoke~\cite[Algorithm~10.17]{BPR06} to describe an ordered list of the real roots of the univariate polynomials in the quantifier free formula. Afterwards, we decide whether $[\alpha(\delta),\alpha_k)$ or $(\beta_k,\beta(\delta)]$ contains any real root from the list. We omit the description here and refer the reader to~\cite[Chapters~10 and~14]{BPR06} for details.

\vspace{5px}
\noindent
Let $\mathcal{E}_{\mathrm{non}}$ be a bounded nonlinearity interval. Then, starting at an arbitrary $\bar{\epsilon} \in \mathcal{E}_{\mathrm{non}}$, the algorithmic map of Algorithm~\ref{alg:nonlinearity_computation} generates a non-increasing sequence of $\alpha_k$ and a non-decreasing sequence of $\beta_k$ which converge to $\hat{\alpha}$ and $\hat{\beta}$, respectively, in the closure of $\mathcal{E}_{\mathrm{non}}$, as $k \to \infty$.

\begin{algorithm}[] 
\caption{Computation of a nonlinearity interval}
\label{alg:nonlinearity_computation}
\begin{algorithmic}
\State \textbf{Input} A singleton invariancy set $\{\bar{\epsilon}\}$
\vspace{5px}
\State Set $\alpha_1=\bar{\epsilon}$, $\alpha_0=-\infty$, $k=1$.
\vspace{5px}
\State Apply~\cite[Algorithm~14.5]{BPR06} to the quantified formula describing $S_{\pi(\bar{\epsilon})}$. Then apply~\cite[Algorithm~10.17]{BPR06} to the quantifier free formula to describe an ordered list of the real roots of the polynomials.
 \vspace{5px}
\While{$\alpha_k \neq \alpha_{k-1}$} \Comment{Move backwards.}
\State Compute $\big(x^*(\alpha_k);y^*(\alpha_k);s^*(\alpha_k)\big)$ and $\delta(\alpha_k)$. 
\State Set $\delta=2\delta(\alpha_k)$.
\Repeat  \Comment{Invoke the connectivity subroutine.}
\State Set $\delta = \delta/2$.
\State Solve the minimization auxiliary problem in~\eqref{auxiliary_nonlinear_lower} to compute $\alpha(\delta)$.
\Until{$\alpha(\delta)=\alpha_k$ or $[\alpha(\delta),\alpha_k)$ does not contain any real root in the list.}
\State Set $k=k+1$, $\alpha_k=\alpha(\delta)$.
\EndWhile\\
\Return $\hat{\alpha}=\alpha_k$ and its associated optimal solution $\big(x^*(\hat{\alpha});y^*(\hat{\alpha});s^*(\hat{\alpha})\big)$.

\vspace{10px}
\State Set $\beta_1=\bar{\epsilon}$, $\beta_0=\infty$, $k=1$. \\
\While{$\beta_k \neq \beta_{k-1}$} \Comment{Move forwards.}
\State Compute $\big(x^*(\beta_k);y^*(\beta_k);s^*(\beta_k)\big)$ and $\delta(\beta_k)$. 
\State Set $\delta=2\delta(\beta_k)$.
\Repeat  \Comment{Invoke the connectivity subroutine.}
\State Set $\delta = \delta/2$.
\State Solve the maximization auxiliary problem in~\eqref{auxiliary_nonlinear_lower} to compute $\beta(\delta)$.
\Until{$\beta_k=\beta(\delta)$ or $(\beta_k,\beta(\delta)]$ does not contain any real root in the list.}
\State Set $k=k+1$, $\beta_k=\beta(\delta)$.
\EndWhile\\
\Return $\hat{\beta}=\beta_k$ and its associated optimal solution $\big(x^*(\hat{\beta});y^*(\hat{\beta});s^*(\hat{\beta})\big)$.

\vspace{5px}
\If{$\hat{\alpha} < \bar{\epsilon} < \hat{\beta}$} \Comment{$\bar{\epsilon}$ belongs to a nonlinearity interval.}
\State $(\hat{\alpha},\hat{\beta})$ is a subinterval of the nonlinearity interval containing $\bar{\epsilon}$. \\
\Else \Comment{$\bar{\epsilon}$ might be a transition point.}
\State $\bar{\epsilon}$ is a singleton invariancy set at which the continuity of the optimal set mapping fails. 
\EndIf
\end{algorithmic}
\end{algorithm}

\begin{remark}\label{SDO_SOCO_nonlinearity_alternative}
Since Lemma~\ref{distance_cond}, Corollary~\ref{corollary_singleton}, and the auxiliary problems~\eqref{auxiliary_nonlinear_lower} are all applicable to an invariancy interval, Algorithm~\ref{alg:nonlinearity_computation} can be also invoked to find the boundary points of an invariancy interval. Furthermore, it is worth noting that Lemma~\ref{distance_cond}, Corollary~\ref{corollary_singleton}, and the auxiliary problems~\eqref{auxiliary_nonlinear_lower} can be all generalized to a parametric SDO problem. Consequently, Algorithm~\ref{alg:nonlinearity_computation} can be extended to directly compute a nonlinearity interval of a SDO reformulation of $(\mathrm{P}_{\epsilon})-(\mathrm{D}_{\epsilon})$, see Section~\ref{conclusion}. We should note, however, that due to the auxiliary problems and the complexity of the quantifier elimination algorithm, see~\cite[Algorithm~14.5]{BPR06}, the extended Algorithm~\ref{alg:nonlinearity_computation} will have a higher worst-case complexity.
\end{remark}

\subsection{On the identification of a transition point}\label{nonlinearity_interval_strict_fails}
Algorithm~\ref{alg:nonlinearity_computation} relies on the existence of a strictly complementary optimal solution at a given initial point $\bar{\epsilon}$. However, a transition point might coexist with the failure of the strict complementarity condition, see e.g., problem~\eqref{ex:introductory_SOCO}. More specifically, by Theorem~\ref{nonlinearity_continuity_SOCO}, either the strict complementarity or the continuity of the optimal set mapping fails at a transition point. Therefore, Algorithm~\ref{alg:nonlinearity_computation} may be inapplicable to a transition point. To resolve this issue, we present an alternative approach to check the existence of a transition point under both the primal and dual nondegeneracy conditions. We evaluate the higher-order derivatives of the Lagrange multipliers associated with a nonlinear optimization reformulation of $(\mathrm{D}_{\epsilon})$. Obviously, we assume the failure of the strict complementarity condition, since otherwise we would have a nonlinearity interval by the uniqueness of the optimal solution, Lemma~\ref{continuity_sufficient_condition}, and Theorem~\ref{nonlinearity_continuity_SOCO}. 

\vspace{5px}
\noindent
From this point on, we fix $\bar{\epsilon}$ and assume that both the primal and dual nondegeneracy conditions hold at $\bar{\epsilon}$, i.e., there exists a unique optimal solution $\big(x^*(\bar{\epsilon});y^*(\bar{\epsilon});s^*(\bar{\epsilon})\big)$ which is both primal and dual nondegenerate. Further, we define
\begin{align*}
\big\{\bar{\mathcal{B}},\bar{\mathcal{N}},\bar{\mathcal{R}},\{\bar{\mathcal{T}_1},\bar{\mathcal{T}_2},\bar{\mathcal{T}_3}\}\big\}\!:=\pi(\bar{\epsilon}).
\end{align*}

\paragraph{\textbf{Nonlinear reformulation}}
As shown in~\cite{MT19a}, the unique optimal solution of $(\mathrm{D}_{\bar{\epsilon}})$ can be obtained from a globally optimal solution of $(\mathrm{DN}_{\epsilon})$ at $\bar{\epsilon}$:
\begin{align*}
(\mathrm{DN}_{\epsilon}) \quad \min  \qquad -b^Tw\\[-1\jot]
\st \qquad \  A_i^T w &= c^i + \epsilon \bar{c}^i, & i &\in \bar{\mathcal{B}} \cup \bar{\mathcal{T}_1} \cup \bar{\mathcal{T}_2},\\[-1\jot]
A_i^Tw + z^i &= c^i + \epsilon \bar{c}^i, & i &\in \bar{\mathcal{R}} \cup \bar{\mathcal{N}} \cup \bar{\mathcal{T}_3},\\[-1\jot]
(z^i)^T R^i z^i &= 0, & i &\in \bar{\mathcal{R}} \cup \bar{\mathcal{T}_3},\\[-1\jot]
z &\in \bar{\mathcal{W}},
\end{align*}
where $R^i$ is defined in~\eqref{R_definition}, $w \in \mathbb{R}^m$, $z^i\!:=\!(z_1^i; z_{2:n_i}^i) \in \mathbb{R}^{n_i}$ for $i \in \bar{\mathcal{R}} \cup \bar{\mathcal{N}} \cup \bar{\mathcal{T}_3}$, and $\bar{\mathcal{W}}$ is a nonempty open convex cone defined as
\begin{align*}
\mathcal{\bar{W}}\!:=\!\big\{ z \mid z_1^i > 0, \ i \in \bar{\mathcal{R}} \cup \bar{\mathcal{T}_3}, \ \ z^i \in \interior(\mathbb{L}^{n_i}_+), \  i \in \bar{\mathcal{N}}\big \}.
\end{align*}
\noindent
Notice that $(\mathrm{DN}_{\bar{\epsilon}})$ has a unique globally optimal solution because $(\mathrm{D}_{\bar{\epsilon}})$ has a unique optimal solution. Let us define
\begin{align*}
\bar{\mathcal{I}}\!:=\{1,\ldots,p\}.
\end{align*}

\noindent
The Lagrange multipliers associated with the constraints in $(\mathrm{DN}_{\epsilon})$ are denoted by $u^i$ for $i \in \bar{\mathcal{I}}$ and $v \in \mathbb{R}^{|\bar{\mathcal{R}}| + |\bar{\mathcal{T}_3}|}$, respectively. Further, the concatenation of the column vectors $z^i$ for $i \in \bar{\mathcal{R}} \cup \bar{\mathcal{N}} \cup \bar{\mathcal{T}_3}$ and the concatenation of the column vectors $u^i$ for $i \in \bar{\mathcal{I}}$ are denoted by $z$ and $u$, respectively. The first-order optimality conditions for $(\mathrm{DN}_{\epsilon})$ are given by
\begin{equation}\label{optimal_condition_nlo_soco}
\begin{aligned}
- (A^i)_{\bar{\mathcal{I}} } u &= b,\\[-1\jot]
-u^i-2v_i R^i z^i&=0, & i &\in \bar{\mathcal{R}},\\[-1\jot]
-u^i &= 0, & i &\in \bar{\mathcal{N}},\\[-1\jot]
-u^i-2v_i R^i z^i&=0, & i &\in \bar{\mathcal{T}_3},\\[-1\jot]
(A^i)^T w &= c^i + \epsilon \bar{c}^i, & i &\in \bar{\mathcal{B}} \cup \bar{\mathcal{T}_1} \cup \bar{\mathcal{T}_2},\\[-1\jot]
(A^i)^T w + z^i &= c^i + \epsilon \bar{c}^i, & i &\in \bar{\mathcal{R}} \cup \bar{\mathcal{N}} \cup \bar{\mathcal{T}_3},\\[-1\jot]
(z^i)^T R^i z^i &= 0, & i &\in \bar{\mathcal{R}} \cup \bar{\mathcal{T}_3},\\[-1\jot]
z &\in \bar{\mathcal{W}}.
\end{aligned}
\end{equation}
For the unique globally optimal solution $\big(w^*(\bar{\epsilon});z^*(\bar{\epsilon})\big)$ with $z^*(\bar{\epsilon}) \in \bar{\mathcal{W}}$, there exist unique~\cite[Lemma~3.2]{MT19a} Lagrange multipliers $u^*(\bar{\epsilon})$ and $v^*(\bar{\epsilon})$, such that $\big(w^*(\bar{\epsilon});z^*(\bar{\epsilon});u^*(\bar{\epsilon});v^*(\bar{\epsilon})\big)$ satisfies the first-order optimality conditions~\eqref{optimal_condition_nlo_soco}.

\vspace{5px}
\noindent
The first-order optimality conditions~\eqref{optimal_condition_nlo_soco} can be represented by $G\big((w;z;u;v),\epsilon\big)=0, z \in \bar{\mathcal{W}}$, where the mapping $G:\mathbb{R}^{\bar{n}_c} \times \mathbb{R} \to \mathbb{R}^{\bar{n}_c}$ is defined as
\begin{align*}
G\big((w;z;u;v),\epsilon\big)\!:=\!
\begingroup 
\setlength\arraycolsep{.2pt}
\begin{pmatrix}
-(A^i)_{\bar{\mathcal{I}}} u - b & \\
 -u^i-2v_i R^i z^i & i \in \bar{\mathcal{R}}\\
-u^i & i \in \bar{\mathcal{N}}\\
 -u^i-2v_i R^i z^i & i \in \bar{\mathcal{T}_3}\\
(A^i)^T w-c^i -\epsilon \bar{c}^i &\qquad i \in \bar{\mathcal{B}} \cup \bar{\mathcal{T}_1} \cup \bar{\mathcal{T}_2}\\
(A^i)^T w + z^i-c^i-\epsilon \bar{c}^i &\qquad  i \in \bar{\mathcal{R}} \cup \bar{\mathcal{N}} \cup \bar{\mathcal{T}_3}\\
(z^i)^T R^i z^i & i \in \bar{\mathcal{R}} \cup \bar{\mathcal{T}_3}
\end{pmatrix}\endgroup,
\end{align*}
and
\begin{align*}
\bar{n}_c\!:=\!\bar{n} + \sum_{i \in \bar{\mathcal{R}} \cup \bar{\mathcal{N}} \cup \bar{\mathcal{T}_3}} n_i + |\bar{\mathcal{R}}| + |\bar{\mathcal{T}_3}|+ m.
\end{align*}
The following lemma is in order.
\begin{lemma}[Lemmas~3.2,~3.3, and~3.5 in~\cite{MT19a}]\label{Jacobian_nonsingularity}
The Jacobian $\nabla G$ is nonsingular at $\big(\big(w^*(\bar{\epsilon});z^*(\bar{\epsilon});u^*(\bar{\epsilon});v^*(\bar{\epsilon})\big),\bar{\epsilon}\big)$ if both the primal and dual nondegeneracy conditions hold at $\bar{\epsilon}$. 
\end{lemma}

\paragraph{\textbf{Stability of primal-dual nondegeneracy}}
Under both the primal and dual nondegeneracy conditions at $\bar{\epsilon}$, the nonsingularity of the Jacobian and the uniqueness of the optimal solution $\big(x^*(\epsilon);y^*(\epsilon);s^*(\epsilon)\big)$ is not only guaranteed at $\bar{\epsilon}$ but also on a neighborhood of $\bar{\epsilon}$.

\begin{lemma}\label{stability_nondegeneracy}
Both the primal and dual nondegeneracy conditions hold at $\bar{\epsilon}$, if and only if, they hold on a sufficiently small neighborhood of $\bar{\epsilon}$.
\end{lemma}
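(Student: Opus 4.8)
My plan is to prove the nontrivial ("only if") implication; the converse is immediate, since any property holding on an open neighborhood of $\bar{\epsilon}$ holds at $\bar{\epsilon}$ in particular. So assume both nondegeneracy conditions hold at $\bar{\epsilon}$. I would first extract the data at $\bar{\epsilon}$: by the fact recalled in Section~\ref{nondegeneracy} (see~\cite{AG03}), the presence of a primal (resp.\ dual) nondegenerate optimal solution forces $\mathcal{D}^*(\bar{\epsilon})$ (resp.\ $\mathcal{P}^*(\bar{\epsilon})$) to be single-valued, so there is a unique primal--dual optimal pair $\big(x^*(\bar{\epsilon});y^*(\bar{\epsilon});s^*(\bar{\epsilon})\big)$, which is trivially maximally complementary, both primal and dual nondegenerate, with optimal partition $\bar{\pi}$. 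I would then set up the reformulations $(\mathrm{PN}_{\epsilon})$, $(\mathrm{DN}_{\epsilon})$ attached to $\bar{\pi}$: at $\bar{\epsilon}$ their unique globally optimal solutions recover the base pair, $(\mathrm{DN}_{\bar{\epsilon}})$ admits unique multipliers $\big(u^*(\bar{\epsilon});v^*(\bar{\epsilon})\big)$ solving~\eqref{optimal_condition_nlo_soco}, and by Theorem~\ref{Jacobian_nonsingularity} the Jacobian $\nabla G$ is nonsingular at $\big(\big(w^*(\bar{\epsilon});z^*(\bar{\epsilon});u^*(\bar{\epsilon});v^*(\bar{\epsilon})\big),\bar{\epsilon}\big)$; symmetrically, the first-order system of $(\mathrm{PN}_{\epsilon})$ has nonsingular Jacobian at the corresponding point (cf.~\cite{MT19a}).

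Next I would run a continuation argument. Since $G$ is polynomial and $\bar{\mathcal{W}}$ is open, the implicit function theorem~\cite[Theorem~10.2.1]{D60} yields an open neighborhood $U\subseteq\interior(\mathcal{E})$ of $\bar{\epsilon}$ and a $C^{\infty}$ curve $\epsilon\mapsto\big(w^*(\epsilon);z^*(\epsilon);u^*(\epsilon);v^*(\epsilon)\big)$ with $z^*(\epsilon)\in\bar{\mathcal{W}}$ and $G(\cdot,\epsilon)=0$ on $U$ (shrinking $U$, $\nabla G$ stays nonsingular there); doing the same for $(\mathrm{PN}_{\epsilon})$ gives a $C^{\infty}$ curve $\epsilon\mapsto\nu^*(\epsilon)\in\bar{\mathcal{V}}$. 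I would reconstruct $x^*(\epsilon)$ from $\nu^*(\epsilon)$ (taking $x^{*i}(\epsilon)=0$ for $i\in\bar{\mathcal{N}}\cup\bar{\mathcal{T}}_1\cup\bar{\mathcal{T}}_3$) and $s^*(\epsilon)$ from $z^*(\epsilon)$ (taking $s^{*i}(\epsilon)=0$ for $i\in\bar{\mathcal{B}}\cup\bar{\mathcal{T}}_1\cup\bar{\mathcal{T}}_2$). Then the constraints of $(\mathrm{PN}_{\epsilon})$ and $(\mathrm{DN}_{\epsilon})$, together with openness of $\bar{\mathcal{V}},\bar{\mathcal{W}}$, give primal and dual feasibility with exactly the interior/relative--boundary/zero pattern of $\bar{\pi}$, while~\eqref{optimal_condition_nlo_soco} gives $x^{*i}(\epsilon)\circ s^{*i}(\epsilon)=0$ for all $i$ --- for $i\in\bar{\mathcal{R}}$ using that $v^*_i(\epsilon)>0$ on $U$ (inherited by continuity from $v^*_i(\bar{\epsilon})>0$), so that $x^{*i}(\epsilon)$ is a positive multiple of $R^i s^{*i}(\epsilon)$ and hence orthogonal to $s^{*i}(\epsilon)$ because $(s^{*i}(\epsilon))^{T}R^{i}s^{*i}(\epsilon)=0$. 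By Assumption~\ref{IPC} and~\cite[Theorem~5.81]{BS00}, strong duality then makes $\big(x^*(\epsilon);y^*(\epsilon);s^*(\epsilon)\big)$ optimal for $(\mathrm{P}_{\epsilon})$ and $(\mathrm{D}_{\epsilon})$ throughout $U$.

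Finally I would propagate nondegeneracy and close the loop. By the Remark following~\eqref{dual_nondegenerate}, $x^*(\epsilon)$ is primal nondegenerate exactly when the matrix in~\eqref{primal_nondegenerate}, built from $\bar{\mathcal{B}},\bar{\mathcal{R}},\bar{\mathcal{T}}_2$ and from a basis of $\mathrm{range}\big(L(x^{*i}(\epsilon))\big)$ (the positive-eigenvalue eigenspace of $L\big(x^{*i}(\epsilon)\big)$), has full row rank; this matrix is continuous in $\epsilon$ near $\bar{\epsilon}$, since $x^*(\epsilon)$ is $C^{\infty}$ and for $i\in\bar{\mathcal{R}}\cup\bar{\mathcal{T}}_2$ the zero eigenvalue of $L\big(x^{*i}(\epsilon)\big)$ stays simple and isolated (as $x^{*i}(\bar{\epsilon})$ lies on the relative boundary away from the origin), so $\mathrm{range}\big(L(x^{*i}(\epsilon))\big)$ has constant dimension $n_i-1$ and varies continuously. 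Full row rank at $\bar{\epsilon}$ then persists on a neighborhood, and the analogous argument for~\eqref{dual_nondegenerate} makes $\big(y^*(\epsilon);s^*(\epsilon)\big)$ dual nondegenerate near $\bar{\epsilon}$. Reapplying the fact recalled in Section~\ref{nondegeneracy}, now at $\epsilon$, $\mathcal{P}^*(\epsilon)$ and $\mathcal{D}^*(\epsilon)$ become single-valued near $\bar{\epsilon}$, so $x^*(\epsilon)$ and $\big(y^*(\epsilon);s^*(\epsilon)\big)$ are their unique --- hence maximally complementary --- elements and are nondegenerate; thus both nondegeneracy conditions hold on an open neighborhood of $\bar{\epsilon}$ (and $\pi(\epsilon)=\bar{\pi}$ there, as a byproduct). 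The hardest step is the claim in the continuation argument that the implicitly-defined curve still furnishes a \emph{genuine} primal--dual optimal pair of $(\mathrm{P}_{\epsilon}),(\mathrm{D}_{\epsilon})$ --- concretely the complementarity $x^{*i}(\epsilon)\circ s^{*i}(\epsilon)=0$ on the $\bar{\mathcal{R}}$-blocks, which forces one to track the coupled multiplier relations of~\eqref{optimal_condition_nlo_soco} and the signs of the $v^*_i(\epsilon)$ along the $C^{\infty}$ continuation (the $\bar{\mathcal{T}}_2$- and $\bar{\mathcal{T}}_3$-blocks being handled by reconstructing $x$ from $(\mathrm{PN}_{\epsilon})$ and $s$ from $(\mathrm{DN}_{\epsilon})$ so that cone membership there is automatic); once optimality of the pair is secured, the persistence of the rank conditions and the reappeal to single-valuedness are comparatively routine.
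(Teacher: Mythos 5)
There is a genuine gap, and it sits exactly where you flagged the "hardest step": the claim that the implicitly-defined continuation of the KKT system of $(\mathrm{PN}_{\epsilon})$/$(\mathrm{DN}_{\epsilon})$ furnishes a genuine optimal primal--dual pair of $(\mathrm{P}_{\epsilon}),(\mathrm{D}_{\epsilon})$ with pattern $\bar{\pi}$ on a whole neighborhood of $\bar{\epsilon}$. The curve obtained from Theorem~\ref{Jacobian_nonsingularity} and the implicit function theorem is only a stationary-point curve of the nonconvex reformulations pinned to the partition $\bar{\pi}$; nothing in $G=0$ or in the open sets $\bar{\mathcal{V}},\bar{\mathcal{W}}$ keeps the reconstructed primal candidate inside the cone on the $\bar{\mathcal{T}}$-blocks. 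Concretely, if you take $x^{*i}(\epsilon)=-u^{i}(\epsilon)$ from the $(\mathrm{DN}_{\epsilon})$ multipliers, then for $i\in\bar{\mathcal{T}}_3$ the stationarity relation in~\eqref{optimal_condition_nlo_soco} gives $x^{*i}(\epsilon)=2v_i(\epsilon)R^i z^i(\epsilon)$ with $v_i(\bar{\epsilon})=0$, and $v_i(\epsilon)$ may become negative, pushing $x^{*i}(\epsilon)$ out of $\mathbb{L}^{n_i}_+$ (similarly, cone membership of $-u^{i}(\epsilon)$ for $i\in\bar{\mathcal{T}}_1\cup\bar{\mathcal{T}}_2$ is unconstrained); if instead you take the primal from the $(\mathrm{PN}_{\epsilon})$ continuation, cone membership is fine but the complementarity between the PN-primal and the DN-dual on the $\bar{\mathcal{R}}$-blocks is no longer a consequence of~\eqref{optimal_condition_nlo_soco}, which couples $u,v$ to $z$ and says nothing about $\nu$. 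Either way optimality at nearby $\epsilon$ is not secured. In fact your advertised byproduct, $\pi(\epsilon)=\bar{\pi}$ near $\bar{\epsilon}$, is false in general and is contradicted by the paper's own example~\eqref{SOCO_R_T3}: there $\epsilon=0$ satisfies both nondegeneracy conditions, yet it is a transition point (precisely because $\big(v(\epsilon)\big)'|_{\epsilon=0}=-\tfrac12\neq 0$), so any argument that forces the partition to be locally constant under the hypotheses of this lemma cannot be right, and it would also render Theorem~\ref{transition_point_identification} vacuous.

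The statement itself does not require the partition to be stable, only the nondegeneracy conditions, and this is how the paper proceeds: uniqueness of the optimal pair gives continuity of the single-valued mappings $\mathcal{P}^*(\epsilon),\mathcal{D}^*(\epsilon)$ at $\bar{\epsilon}$ (Lemma~\ref{continuity_sufficient_condition}); one then takes the \emph{actual} optimal solutions at nearby $\epsilon$, keeps track of how the blocks in $\bar{\mathcal{T}}_1,\bar{\mathcal{T}}_2,\bar{\mathcal{T}}_3$ may redistribute among the classes at $\epsilon$ (the index sets $\mathcal{I},\mathcal{I}',\dots$ in the paper's proof), and propagates the rank conditions~\eqref{primal_nondegenerate}--\eqref{dual_nondegenerate} by continuity together with perturbation theory of invariant subspaces, noting that the nondegeneracy matrix at $\epsilon$ gains columns (primal case) or loses columns (dual case) relative to the perturbed matrix built on $\bar{\pi}$, so full row/column rank persists. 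Your final paragraph (continuity of the eigenspace for the simple zero eigenvalue and persistence of full rank) is in the right spirit, but it is applied to a curve that need not consist of optimal solutions and it fixes the index sets to $\bar{\mathcal{B}},\bar{\mathcal{R}},\bar{\mathcal{T}}_2$, whereas nondegeneracy at $\epsilon$ must be verified with respect to the partition at $\epsilon$, which can differ from $\bar{\pi}$.
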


\begin{proof}
Assume that both the primal and the dual nondegeneracy conditions hold at $\bar{\epsilon}$. Then there exists a unique optimal solution $\big(x^*(\bar{\epsilon});y^*(\bar{\epsilon});s^*(\bar{\epsilon})\big)$ and thus, by Lemma~\ref{continuity_sufficient_condition}, both $\mathcal{P}^*(\epsilon)$ and $\mathcal{D}^*(\epsilon)$ are continuous at $\bar{\epsilon}$. We show that for all $\epsilon$ sufficiently close to $\bar{\epsilon}$, there exists a primal-dual nondegenerate optimal solution.

\vspace{5px}
\noindent
Unlike $\bar{\mathcal{B}}$, $\bar{\mathcal{N}}$, and $\bar{\mathcal{R}}$, the partitions $\bar{\mathcal{T}_1}$, $\bar{\mathcal{T}_2}$, and $\bar{\mathcal{T}_3}$ are not necessarily ``stable'' in a sense of Lemma~\ref{partition_inclusion_lemma}, e.g., it may hold that $\bar{\mathcal{T}_1} \not \subseteq \mathcal{T}_1(\epsilon)$ for any $\epsilon$ sufficiently close to $\bar{\epsilon}$. Given a fixed $\epsilon$, we define the following ``unstable'' subsets of $\bar{\mathcal{T}_1}$, $\bar{\mathcal{T}_2}$, and $\bar{\mathcal{T}_3}$: 
\begin{equation}\label{unstable_partitions}
\begin{aligned}
\bar{\mathcal{T}_1}: \begin{cases}
\bar{\mathcal{T}_1}\mathcal{B}(\epsilon)\!&:=\bar{\mathcal{T}_1} \cap \mathcal{B}(\epsilon),\\[-1\jot]
\bar{\mathcal{T}_1}\mathcal{N}(\epsilon)\!&:=\bar{\mathcal{T}_1} \cap \mathcal{N}(\epsilon),\\[-1\jot]
\bar{\mathcal{T}_1}\mathcal{R}(\epsilon)\!&:=\bar{\mathcal{T}_1} \cap \mathcal{R}(\epsilon),\\[-1\jot]
\bar{\mathcal{T}_1}\mathcal{T}_2(\epsilon)\!&:=\bar{\mathcal{T}_1} \cap \mathcal{T}_2(\epsilon),\\[-1\jot]
\bar{\mathcal{T}_1}\mathcal{T}_3(\epsilon)\!&:=\bar{\mathcal{T}_1} \cap \mathcal{T}_3(\epsilon),\\[-1\jot]
\end{cases}
\end{aligned}
 \qquad
 \begin{aligned}
&\bar{\mathcal{T}_2}: \begin{cases}
\bar{\mathcal{T}_2}\mathcal{B}(\epsilon)\!&:=\bar{\mathcal{T}_2}  \cap \mathcal{B}(\epsilon),\\[-1\jot]
\bar{\mathcal{T}_2}\mathcal{R}(\epsilon)\!&:=\bar{\mathcal{T}_2} \cap \mathcal{R}(\epsilon),\\[-1\jot]
\end{cases} \\
&\bar{\mathcal{T}_3}: \begin{cases}
\bar{\mathcal{T}_3}\mathcal{N}(\epsilon)\!&:=\bar{\mathcal{T}_3} \cap \mathcal{N}(\epsilon),\\[-1\jot]
\bar{\mathcal{T}_3}\mathcal{R}(\epsilon)\!&:=\bar{\mathcal{T}_3} \cap \mathcal{R}(\epsilon).
\end{cases}
\end{aligned}
\end{equation}
For instance, $\bar{\mathcal{T}_1}\mathcal{B}(\epsilon)$ denotes the indices of second-order cones belonging to $\bar{\mathcal{T}_1}$ whose partition changes to $\mathcal{B}(\epsilon)$ after perturbation $\bar{\epsilon}$ to $\epsilon$. When $\epsilon$ is sufficiently close to $\bar{\epsilon}$, it follows from Lemma~\ref{partition_inclusion_lemma} and~\eqref{unstable_partitions} that the optimal partition at $\epsilon$ can be formed as
\begin{equation}\label{new_partition}
\begin{aligned}
\mathcal{B}(\epsilon)&= \bar{\mathcal{B}} \cup \bar{\mathcal{T}_1}\mathcal{B}(\epsilon) \cup \bar{\mathcal{T}_2}\mathcal{B}(\epsilon),\\[-1\jot]
\mathcal{N}(\epsilon)&= \bar{\mathcal{N}} \cup \bar{\mathcal{T}_1}\mathcal{N}(\epsilon) \cup \bar{\mathcal{T}_3}\mathcal{N}(\epsilon),\\[-1\jot]
\mathcal{R}(\epsilon)&= \bar{\mathcal{R}} \cup \bar{\mathcal{T}_1}\mathcal{R}(\epsilon) \cup \bar{\mathcal{T}_2}\mathcal{R}(\epsilon) \cup \bar{\mathcal{T}_3}\mathcal{R}(\epsilon),\\[-1\jot]
\mathcal{T}_1(\epsilon)&= \bar{\mathcal{T}_1} \setminus \big\{\bar{\mathcal{T}_1}\mathcal{B}(\epsilon) \cup \bar{\mathcal{T}_1}\mathcal{N}(\epsilon) \cup \bar{\mathcal{T}_1}\mathcal{R}(\epsilon) \cup \bar{\mathcal{T}_1}\mathcal{T}_2(\epsilon) \cup \bar{\mathcal{T}_1}\mathcal{T}_3(\epsilon)   \big\},\\[-1\jot]
\mathcal{T}_2(\epsilon)&= \bar{\mathcal{T}_2} \cup \bar{\mathcal{T}_1}\mathcal{T}_2(\epsilon) \setminus \big\{\bar{\mathcal{T}_2}\mathcal{B}(\epsilon) \cup \bar{\mathcal{T}_2}\mathcal{R}(\epsilon)\big\},\\[-1\jot]
\mathcal{T}_3(\epsilon)&= \bar{\mathcal{T}_3} \cup \bar{\mathcal{T}_1}\mathcal{T}_3(\epsilon) \setminus \big\{\bar{\mathcal{T}_3}\mathcal{N}(\epsilon) \cup \bar{\mathcal{T}_3}\mathcal{R}(\epsilon)\big\}.
\end{aligned}
\end{equation}
Let $\hat{\epsilon}$ be so close to $\bar{\epsilon}$ that~\eqref{new_partition} holds, and an arbitrary maximally complementary optimal solution $\big(x^*(\hat{\epsilon});y^*(\hat{\epsilon});s^*(\hat{\epsilon})\big)$ is sufficiently close%
\footnote{Since $\mathcal{P}^*(\epsilon) \times \mathcal{D}^*(\epsilon)$ is single-valued and locally bounded at $\bar{\epsilon}$, see~\eqref{optimal_set_locally_bounded}, for every ball $\mathbb{B}_r$ of radius $r$ centered at $\big(x^*(\bar{\epsilon});y^*(\bar{\epsilon});s^*(\bar{\epsilon})\big)$ there exists a neighborhood $V$ of $\bar{\epsilon}$ such that $\mathcal{P}^*(\epsilon) \times \mathcal{D}^*(\epsilon) \subseteq \mathbb{B}_r$ for every $\epsilon \in V$, see~\cite[Proposition~5.12(a)]{Rock09}. Therefore, it is always possible to find such an $\hat{\epsilon}$ with both desired properties.} to the unique optimal solution $\big(x^*(\bar{\epsilon});y^*(\bar{\epsilon});s^*(\bar{\epsilon})\big)$. In what follows, we prove that $\big(x^*(\hat{\epsilon});y^*(\hat{\epsilon});s^*(\hat{\epsilon})\big)$ must be primal-dual nondegenerate.

\paragraph{Primal nondegeneracy}
Recall from the primal nondegeneracy condition, see~\eqref{primal_nondegenerate}, at $\bar{\epsilon}$ that 
\begin{align*}
\begin{pmatrix} \big(A^i\bar{P}^{*i}(\bar{\epsilon})\big)_{\bar{\mathcal{R}} \cup \bar{\mathcal{T}_2}}, \ (A^i)_{\bar{\mathcal{B}}} \end{pmatrix}
\end{align*}
has full row rank. By the continuity of the optimal set mapping at $\bar{\epsilon}$ and the perturbation theory of invariant subspaces~\cite[Theorem~4.11]{St73}%
\footnote{The perturbation theory of invariant subspaces states that the eigenspace associated with the cluster of positive eigenvalues of $L\big(x^{*i}(\bar{\epsilon})\big)$ stays near that of $L\big(x^{*i}(\hat{\epsilon})\big)$.}, we can assume w.l.o.g. that $\bar{P}^{*i}(\hat{\epsilon})$ stays near $\bar{P}^{*i}(\bar{\epsilon})$ for $i \in \bar{\mathcal{R}} \cup \bar{\mathcal{T}_2}$ such that the matrix  
\begin{align}\label{primal_nondegeneracy_perturbed}
\begin{pmatrix} \big(A^i\bar{P}^{*i}(\hat{\epsilon})\big)_{\bar{\mathcal{R}} \cup \bar{\mathcal{T}_2}},  \ (A^i)_{\bar{\mathcal{B}}} \end{pmatrix}
\end{align}
has full row rank, where the columns of $\bar{P}^{*i}(\hat{\epsilon}) \in \mathbb{R}^{n_i \times n_i - 1}$ are normalized eigenvectors corresponding to the positive eigenvalues of $L\big((x^{*i}(\hat{\epsilon})\big)$. Now, by checking the primal nondegeneracy condition for $x^*(\hat{\epsilon})$, we can conclude from~\eqref{new_partition} that 
\begin{align*}
\begin{pmatrix} \big(A^i\bar{P}^{*i}(\hat{\epsilon})\big)_{\mathcal{R}(\hat{\epsilon}) \cup \mathcal{T}_2(\hat{\epsilon})}, \ (A^i)_{\mathcal{B}(\hat{\epsilon})} \end{pmatrix}
\end{align*}
must have full row rank. Otherwise, there would exist a nonzero $\xi$ of appropriate size such that
\begin{align*}
\begin{cases} \big(A^i\bar{P}^{*i}(\hat{\epsilon})\big)^T \xi = 0, \ \ &i \in \mathcal{R}(\hat{\epsilon}) \cup \mathcal{T}_2(\hat{\epsilon}),\\ (A^i)^T \xi = 0, \ \ &i \in \mathcal{B}(\hat{\epsilon}),  \end{cases}
\end{align*}
which, by~\eqref{new_partition}, would imply that
\begin{align*}
\begin{cases} \big(A^i\bar{P}^{*i}(\hat{\epsilon})\big)^T \xi = 0, \ \ &i \in \mathcal{\bar{R}} \cup \mathcal{\bar{T}}_2 \setminus \bar{\mathcal{T}_2}\mathcal{B}(\hat{\epsilon}),\\ (A^i)^T \xi = 0, \ \ &i \in \mathcal{\bar{B}} \cup \bar{\mathcal{T}_2}\mathcal{B}(\hat{\epsilon})  \end{cases}
\end{align*}
has a nonzero solution. However, this contradicts the linear independence of the rows in~\eqref{primal_nondegeneracy_perturbed}. Therefore, $x^*(\hat{\epsilon})$ must be primal nondegenerate.

\paragraph{Dual nondegeneracy}
The proof for the dual nondegeneracy condition is analogous. The dual nondegeneracy condition at $\bar{\epsilon}$ holds if
\begin{align*}
\begin{pmatrix} \big(A^i R^i s^{*i}(\bar{\epsilon})\big)_{\bar{\mathcal{R}} \cup \bar{\mathcal{T}_3}}, \ (A^i)_{\bar{\mathcal{B}} \cup \bar{\mathcal{T}_1} \cup \bar{\mathcal{T}_2}} \end{pmatrix} 
\end{align*}
has linearly independent columns. By the continuity of the dual optimal set mapping, we can assume w.l.o.g. that $\hat{\epsilon}$ is sufficiently close to $\bar{\epsilon}$ such that $s^{*i}_1(\hat{\epsilon}) > 0$ for all $i \in \bar{\mathcal{T}_3}$, $s^{*i}(\hat{\epsilon})$ stays close to $s^{*i}(\bar{\epsilon})$, and the matrix
\begin{align}
\begin{pmatrix} \big(A^i R^i s^{*i}(\hat{\epsilon})\big)_{\bar{\mathcal{R}} \cup \bar{\mathcal{T}_3}}, \ (A^i)_{\bar{\mathcal{B}} \cup \bar{\mathcal{T}_1} \cup \bar{\mathcal{T}_2}}  \end{pmatrix} \label{dual_nondegeneracy_perturbed}
\end{align}
remains full column rank. Then the matrix
\begin{align*}
\begin{pmatrix} \big(A^iR^i s^{*i}(\hat{\epsilon})\big)_{\mathcal{R}(\hat{\epsilon}) \cup \mathcal{T}_3(\hat{\epsilon})}, \ (A^i)_{\mathcal{B}(\hat{\epsilon})  \cup \mathcal{T}_1(\hat{\epsilon})  \cup \mathcal{T}_2(\hat{\epsilon})}  \end{pmatrix} 
\end{align*}
must have linearly independent columns by~\eqref{new_partition}. Otherwise, there would exist nonzero $(\xi;\kappa)$ such that
\begin{align*}
\big(A^iR^i s^{*i}(\hat{\epsilon})\big)_{\mathcal{R}(\hat{\epsilon}) \cup \mathcal{T}_3(\hat{\epsilon})} \xi + \big(A^i\big)_{\mathcal{B}(\hat{\epsilon})  \cup \mathcal{T}_1(\hat{\epsilon})  \cup \mathcal{T}_2(\hat{\epsilon})} \kappa  = 0, 
\end{align*}
which, by~\eqref{new_partition}, would contradict the linear independence of the columns in~\eqref{dual_nondegeneracy_perturbed}. This completes the proof of dual nondegeneracy of $\big(y^*(\hat{\epsilon});s^*(\hat{\epsilon}) \big)$. \qed
\end{proof}
\noindent
As a result of Lemmas~\ref{continuity_sufficient_condition} and~\ref{stability_nondegeneracy}, there exists $\iota > 0 $ such that both the primal and dual optimal set mappings are continuous on $(\bar{\epsilon}-\iota, \bar{\epsilon}+\iota)$, and for the unique optimal solution $\big(x^*(\epsilon);y^*(\epsilon);s^*(\epsilon)\big)$ it holds that $s^{*i}_1(\epsilon) >0$ for every $i \in \bar{\mathcal{R}} \cup \bar{\mathcal{T}_3}$. We define a continuous mapping $\vartheta(\epsilon):(\bar{\epsilon}-\iota, \bar{\epsilon}+\iota) \to \mathbb{R}^{\bar{n}_c}$ by
\begin{align*}
\vartheta(\epsilon)\!:=\!\big(\hat{w}(\epsilon);\hat{z}(\epsilon);\hat{u}(\epsilon);\hat{v}(\epsilon)\big),
\end{align*}
where
\begin{align*}
\hat{w}(\epsilon)&\!:=\!y^*(\epsilon),\\[-1\jot]
\hat{z}^{i}(\epsilon)&\!:=\!s^{*i}(\epsilon), & i &\in \bar{\mathcal{R}} \cup \bar{\mathcal{N}} \cup \bar{\mathcal{T}}_3,\\[-1\jot]
\hat{u}^{i}(\epsilon)&\!:=\!-x^{*i}(\epsilon), & i &\in \bar{\mathcal{I}},\\[-1\jot]
\hat{v}_i(\epsilon)&\!:= \!\frac 12\frac{x^{*i}_1(\epsilon)}{s^{*i}_1(\epsilon)},  & i &\in \bar{\mathcal{R}} \cup \bar{\mathcal{T}_3}.
\end{align*}
We also note from~\eqref{optimal_condition_nlo_soco} that $\vartheta(\bar{\epsilon})$ yields the unique globally optimal solution of $(\mathrm{DN}_{\bar{\epsilon}})$ along with its unique Lagrange multipliers~\cite[Section~3]{MT19a}, i.e., we have
\begin{align*}
\vartheta(\bar{\epsilon})=\big(w^*(\bar{\epsilon});z^*(\bar{\epsilon});u^*(\bar{\epsilon});v^*(\bar{\epsilon})\big).
\end{align*}
Furthermore, if $\pi(\epsilon)$ is constant on a neighborhood of $\bar{\epsilon}$, then we can prove that $\vartheta(\epsilon)$ is a unique real analytic mapping%
\footnote{On an open set $U \subseteq \mathbb{R}$, a mapping $f(x)$ is real analytic if for any given $x_0 \in U$ 
\begin{align*}
f(x)=\sum_{k=0}^{\infty} \frac{\big(f(x_0)\big)^{(k)}}{k!} (x-x_0)^k
\end{align*}
for all $x$ in a neighborhood of $x_0$, where $(.)^{(k)}$ denotes the $k^{\mathrm{th}}$-order derivative. See~\cite[Definition~1.1.5]{KP02} for further properties of an analytic mapping.} such that $G(\vartheta(\epsilon),\epsilon)=0$.
\begin{lemma}\label{analytic_mapping}
Suppose that both the primal and dual nondegeneracy conditions hold at $\bar{\epsilon}$, and the optimal partition is constant on a neighborhood of $\bar{\epsilon}$. Then there exists $0 < \varsigma \le \iota$ such that $\vartheta(\epsilon)$ is a unique real analytic mapping on $(\bar{\epsilon}-\varsigma, \bar{\epsilon}+\varsigma)$ with $G(\vartheta(\epsilon),\epsilon)=0$ for every $\epsilon \in (\bar{\epsilon}-\varsigma, \bar{\epsilon}+\varsigma)$.
\end{lemma}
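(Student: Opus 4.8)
The plan is to apply the analytic implicit function theorem to the equation $G(\vartheta,\epsilon)=0$ at the base point $\big(\vartheta(\bar{\epsilon}),\bar{\epsilon}\big)=\big((w^*(\bar{\epsilon});z^*(\bar{\epsilon});u^*(\bar{\epsilon});v^*(\bar{\epsilon})),\bar{\epsilon}\big)$. First I would observe that the map $G$ is a polynomial in its arguments (the only nonlinearities are the quadratic terms $(z^i)^T R^i z^i$ and the bilinear terms $v_i R^i z^i$), hence it is real analytic jointly in $(w;z;u;v)$ and $\epsilon$ on $\mathbb{R}^{\bar{n}_c}\times\mathbb{R}$, and in particular on the open set $\bar{\mathcal{W}}\times\mathbb{R}$ where the constraint $z\in\bar{\mathcal{W}}$ is automatically satisfied in a neighborhood of $z^*(\bar{\epsilon})$ since $\bar{\mathcal{W}}$ is open and $z^*(\bar{\epsilon})\in\bar{\mathcal{W}}$. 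By Theorem~\ref{Jacobian_nonsingularity}, the Jacobian $\nabla G$ with respect to the variables $(w;z;u;v)$ is nonsingular at $\big(\vartheta(\bar{\epsilon}),\bar{\epsilon}\big)$ because both the primal and dual nondegeneracy conditions hold at $\bar{\epsilon}$. The analytic implicit function theorem (e.g.,~\cite[Chapter~IX]{D60}) then yields $0<\varsigma_0\le\iota$ and a unique real analytic mapping $\tilde{\vartheta}(\epsilon)$ on $(\bar{\epsilon}-\varsigma_0,\bar{\epsilon}+\varsigma_0)$ with $\tilde{\vartheta}(\bar{\epsilon})=\vartheta(\bar{\epsilon})$, $\tilde{\vartheta}(\epsilon)\in\bar{\mathcal{W}}$-component valued appropriately, and $G\big(\tilde{\vartheta}(\epsilon),\epsilon\big)=0$ for all $\epsilon$ in that interval; moreover $\tilde{\vartheta}$ is the only continuous solution branch through $\big(\vartheta(\bar{\epsilon}),\bar{\epsilon}\big)$.

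The remaining work is to identify this analytic branch $\tilde{\vartheta}$ with the mapping $\vartheta$ defined in the excerpt from the unique optimal solutions. Here I would use the hypothesis that the optimal partition is constant on a neighborhood of $\bar{\epsilon}$, together with Lemma~\ref{stability_nondegeneracy}: on $(\bar{\epsilon}-\iota,\bar{\epsilon}+\iota)$ the primal and dual nondegeneracy conditions persist, so by Lemma~\ref{continuity_sufficient_condition} the optimal solution $\big(x^*(\epsilon);y^*(\epsilon);s^*(\epsilon)\big)$ is unique and $\mathcal{P}^*(\epsilon),\mathcal{D}^*(\epsilon)$ are continuous there; hence $\vartheta(\epsilon)$ as defined is a well-defined continuous mapping. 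Because the partition is constant, for every $\epsilon$ in the neighborhood we have $s^{*i}_1(\epsilon)>0$ for $i\in\bar{\mathcal{R}}\cup\bar{\mathcal{T}}_3$, so the formula $\hat{v}_i(\epsilon)=\tfrac12 x^{*i}_1(\epsilon)/s^{*i}_1(\epsilon)$ makes sense and is continuous; and one checks directly from the first-order optimality conditions~\eqref{optimal_condition_nlo_soco} — exactly as in~\cite[Section~3]{MT19a} — that $G\big(\vartheta(\epsilon),\epsilon\big)=0$ and $\hat{z}(\epsilon)\in\bar{\mathcal{W}}$ for all such $\epsilon$. Thus $\vartheta$ is a continuous solution of $G(\cdot,\epsilon)=0$ passing through $\big(\vartheta(\bar{\epsilon}),\bar{\epsilon}\big)$, and by the local uniqueness in the implicit function theorem, $\vartheta$ and $\tilde{\vartheta}$ coincide on a possibly smaller interval $(\bar{\epsilon}-\varsigma,\bar{\epsilon}+\varsigma)$ with $0<\varsigma\le\varsigma_0\le\iota$. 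Consequently $\vartheta$ is real analytic there, and it is the unique such mapping, which is the claim.

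The main obstacle I anticipate is the identification step rather than the invocation of the implicit function theorem: one must verify carefully that the mapping $\vartheta(\epsilon)$ built from optimal solutions actually satisfies $G(\vartheta(\epsilon),\epsilon)=0$ on a full neighborhood — i.e., that the Lagrange multiplier reconstruction $\hat{u}^i=-x^{*i}$, $\hat{v}_i=\tfrac12 x^{*i}_1/s^{*i}_1$ remains valid as $\epsilon$ varies — and this is precisely where constancy of the optimal partition is essential (it guarantees the index sets $\bar{\mathcal{B}},\bar{\mathcal{R}},\bar{\mathcal{N}},\bar{\mathcal{T}}_1,\bar{\mathcal{T}}_2,\bar{\mathcal{T}}_3$ used to define $(\mathrm{DN}_\epsilon)$ are the correct ones for the problem at $\epsilon$, and that the denominators $s^{*i}_1(\epsilon)$ stay bounded away from $0$). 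Once that is in place, local uniqueness of the analytic branch forces the coincidence, and analyticity transfers to $\vartheta$ for free. A minor point to handle is the open constraint $z\in\bar{\mathcal{W}}$: since $\bar{\mathcal{W}}$ is open and $\hat{z}(\bar{\epsilon})=z^*(\bar{\epsilon})\in\bar{\mathcal{W}}$, continuity of $\vartheta$ lets us shrink $\varsigma$ so that $\hat{z}(\epsilon)\in\bar{\mathcal{W}}$ throughout, so this constraint imposes no additional restriction on the analytic solution.
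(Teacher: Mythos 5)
Your proposal is correct and follows essentially the same route as the paper's proof: invoke Theorem~\ref{Jacobian_nonsingularity} and the analytic implicit function theorem to obtain a unique analytic branch through $\big(\vartheta(\bar{\epsilon}),\bar{\epsilon}\big)$, use the constancy of the optimal partition (via~\cite[Section~3]{MT19a}) to verify that the continuous mapping $\vartheta(\epsilon)$ satisfies $G(\vartheta(\epsilon),\epsilon)=0$ near $\bar{\epsilon}$, and then identify $\vartheta$ with the analytic branch by continuity and local uniqueness. Your additional remarks on the openness of $\bar{\mathcal{W}}$ and the positivity of the denominators $s^{*i}_1(\epsilon)$ only make explicit details the paper leaves implicit.
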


\begin{proof}
Recall from the discussion after Lemma~\ref{stability_nondegeneracy} that both $\mathcal{P}^*(\epsilon)$ and $\mathcal{D}^*(\epsilon)$ are single-valued and continuous on $(\bar{\epsilon}-\iota, \bar{\epsilon}+\iota)$. Since $\nabla G\big(\vartheta(\bar{\epsilon}),\bar{\epsilon}\big)$ is nonsingular, see Lemma~\ref{Jacobian_nonsingularity}, the analyticity of $\vartheta(\epsilon)$ follows from the analytic implicit function theorem~\cite[Theorem~10.2.4]{D60} (see also~\cite[Theorem~2.3.5]{KP02}) and the invariancy of the optimal partition. More specifically, there exists $\varrho > 0$ and a unique real analytic mapping $\chi(\epsilon)=\big(w(\epsilon);z(\epsilon);u(\epsilon);v(\epsilon)\big)$ on $(\bar{\epsilon} - \varrho ,\bar{\epsilon} + \varrho)$ such that $G(\chi(\epsilon),\epsilon)=0$ for all $\epsilon \in (\bar{\epsilon} - \varrho ,\bar{\epsilon} + \varrho)$ and $\chi(\bar{\epsilon})=\big(w^*(\bar{\epsilon});z^*(\bar{\epsilon});u^*(\bar{\epsilon});v^*(\bar{\epsilon})\big)$. On the other hand, the invariancy of the optimal partition implies $G(\vartheta(\epsilon),\epsilon)=0$ on a small neighborhood of $\bar{\epsilon}$ such that~\cite[Section~3]{MT19a}. As a consequence, by the continuity and the uniqueness of $\chi(\epsilon)$, there exists $0 < \varsigma \le \min\{\varrho,\iota\}$ such that the analytic mapping $\chi(\epsilon)$ and the continuous mapping $\vartheta(\epsilon)$ coincide on $(\bar{\epsilon}-\varsigma,\bar{\epsilon}+\varsigma)$. \qed
\end{proof}
Consequently, the derivatives of $\chi(\epsilon)$ are analytic and well-defined on $(\bar{\epsilon}-\varrho, \bar{\epsilon}+\varrho)$, where $\varrho$ is defined in Lemma~\ref{analytic_mapping}. Furthermore, when $\pi(\epsilon)$ is constant on $(\bar{\epsilon}-\varsigma, \bar{\epsilon}+\varsigma)$, $\vartheta(\epsilon)$ yields a real analytic mapping for the unique globally optimal solution of $(\mathrm{DN}_{\epsilon})$ and its Lagrange multipliers on $(\bar{\epsilon}-\varsigma, \bar{\epsilon}+\varsigma)$.

\paragraph{\textbf{Computation of the higher-order derivatives}}
By Lemma~\ref{partition_inclusion_lemma}, the continuity of $\mathcal{P}^*(\epsilon)$ and $\mathcal{D}^*(\epsilon)$ at $\bar{\epsilon}$ yields the existence of a neighborhood around $\bar{\epsilon}$ on which 
\begin{align}\label{partition_inclusion}
\bar{\mathcal{B}} \subseteq \mathcal{B}(\epsilon),\quad
\bar{\mathcal{N}} \subseteq \mathcal{N}(\epsilon),\quad
\bar{\mathcal{R}} \subseteq \mathcal{R}(\epsilon)
\end{align}
for every $\epsilon$ in the neighborhood. Hence, in order to identify a transition point, we only need to know how the index sets $\mathcal{T}_1(\epsilon)$, $\mathcal{T}_2(\epsilon)$, and $\mathcal{T}_3(\epsilon)$ change near $\bar{\epsilon}$. This can be done by evaluating the higher-order derivatives of the Lagrange multipliers given by $\chi(\epsilon)$ at $\bar{\epsilon}$, as stated in Theorem~\ref{transition_point_identification}.
\begin{theorem}\label{transition_point_identification}
Suppose that $\{\bar{\epsilon}\}$ is a singleton invariancy set, and both the primal and dual nondegeneracy conditions hold at $\bar{\epsilon}$. Then $\bar{\epsilon}$ belongs to a nonlinearity interval, if and only if
\begin{equation}\label{derivatives_condition}
\begin{aligned}
\big(u_j^{i}(\epsilon)\big)^{(k)}|_{\epsilon = \bar{\epsilon}}  &= 0,  & \forall &i \in \bar{\mathcal{T}_1},\forall j=1,\ldots,n_i,\forall k,\\[-1\jot]
\big((u_1^{i}(\epsilon))^2-\|u^{i}_{2:n_i}(\epsilon)\|^2\big)^{(k)}|_{\epsilon = \bar{\epsilon}}  &= 0, & \forall &i \in \bar{\mathcal{T}_2},\forall k,\\[-1\jot]
\big(v_i(\epsilon)\big)^{(k)}|_{\epsilon = \bar{\epsilon}}  &= 0,  & \forall &i \in \bar{\mathcal{T}_3},\forall k,
\end{aligned}
\end{equation}
where $\big(u(\epsilon);v(\epsilon)\big)$ is given by the analytic mapping $\chi(\epsilon)$, and $(.)^{(k)}$ denotes the $k^{\mathrm{th}}$-order derivative w.r.t. $\epsilon$.
\end{theorem}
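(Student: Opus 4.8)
The plan is to prove the equivalence by reducing it to the statement that, under the running hypotheses (both nondegeneracy conditions at $\bar{\epsilon}$ and strict complementarity failing at $\bar{\epsilon}$), condition~\eqref{derivatives_condition} holds if and only if the optimal partition $\pi(\epsilon)$ is constant on a neighborhood of $\bar{\epsilon}$, from which the conclusion follows by Definitions~\ref{nonlinearity_SOCO} and~\ref{transition_SOCO}. Two ingredients are used throughout. First, by Theorem~\ref{Jacobian_nonsingularity} the Jacobian $\nabla G$ is nonsingular at $\big(\vartheta(\bar{\epsilon}),\bar{\epsilon}\big)$, so the analytic implicit function theorem~\cite[Theorem~10.2.4]{D60} — exactly as in the proof of Lemma~\ref{analytic_mapping}, but \emph{without} assuming the optimal partition is locally constant — supplies $\varrho > 0$ and a unique real analytic map $\chi(\epsilon)=\big(w(\epsilon);z(\epsilon);u(\epsilon);v(\epsilon)\big)$ on $(\bar{\epsilon}-\varrho,\bar{\epsilon}+\varrho)$ with $G(\chi(\epsilon),\epsilon)=0$ and $\chi(\bar{\epsilon})=\vartheta(\bar{\epsilon})$. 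Second, a real analytic scalar function all of whose derivatives vanish at an interior point of its domain is constant on the containing interval.

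\emph{Necessity.} If $\bar{\epsilon}$ lies in a nonlinearity interval $\mathcal{E}_{\mathrm{non}}$, then $\pi(\epsilon)=\bar{\pi}$ on the open set $\mathcal{E}_{\mathrm{non}}$, so Lemma~\ref{analytic_mapping} gives $\chi(\epsilon)=\vartheta(\epsilon)$ near $\bar{\epsilon}$; in particular $u^i(\epsilon)=-x^{*i}(\epsilon)$ for $i\in\bar{\mathcal{I}}$ and $v_i(\epsilon)=\tfrac12\,x^{*i}_1(\epsilon)/s^{*i}_1(\epsilon)$ for $i\in\bar{\mathcal{R}}\cup\bar{\mathcal{T}}_3$, taken along the unique optimal solution $\big(x^*(\epsilon);y^*(\epsilon);s^*(\epsilon)\big)$. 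Since $\pi\equiv\bar{\pi}$, for $i\in\bar{\mathcal{T}}_1$ we get $x^{*i}(\epsilon)=0$, hence $u^i(\epsilon)=0$ identically; for $i\in\bar{\mathcal{T}}_2$ we get $x^{*i}(\epsilon)$ on the boundary of $\mathbb{L}^{n_i}_+$, hence $(u^i_1(\epsilon))^2-\|u^i_{2:n_i}(\epsilon)\|^2=0$ identically; for $i\in\bar{\mathcal{T}}_3$ we get $x^{*i}(\epsilon)=0$ with $s^{*i}_1(\epsilon)>0$, hence $v_i(\epsilon)=0$ identically. Differentiating these identities at $\bar{\epsilon}$ yields~\eqref{derivatives_condition}.

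\emph{Sufficiency (the crux).} Assume~\eqref{derivatives_condition}. Applying the analyticity fact to $\chi$ yields on $(\bar{\epsilon}-\varrho,\bar{\epsilon}+\varrho)$ the \emph{exact} identities $u^i(\epsilon)\equiv u^i(\bar{\epsilon})=0$ for $i\in\bar{\mathcal{T}}_1$, $(u^i_1(\epsilon))^2-\|u^i_{2:n_i}(\epsilon)\|^2\equiv 0$ for $i\in\bar{\mathcal{T}}_2$, and $v_i(\epsilon)\equiv v_i(\bar{\epsilon})=0$ for $i\in\bar{\mathcal{T}}_3$, the constant values being read off from $\chi(\bar{\epsilon})=\vartheta(\bar{\epsilon})$ together with $x^{*i}(\bar{\epsilon})=0$ for $i\in\bar{\mathcal{T}}_1\cup\bar{\mathcal{T}}_3$. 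I would then assemble a candidate primal-dual triple directly from $\chi$: $y(\epsilon):=w(\epsilon)$, $s^i(\epsilon):=z^i(\epsilon)$ for $i\in\bar{\mathcal{R}}\cup\bar{\mathcal{N}}\cup\bar{\mathcal{T}}_3$ and $s^i(\epsilon):=0$ for $i\in\bar{\mathcal{B}}\cup\bar{\mathcal{T}}_1\cup\bar{\mathcal{T}}_2$, and $x^i(\epsilon):=-u^i(\epsilon)$ for all $i$. Using $G(\chi(\epsilon),\epsilon)=0$, the openness of $\bar{\mathcal{W}}$ (forcing $z(\epsilon)\in\bar{\mathcal{W}}$, hence $z^i(\epsilon)\in\boundary(\mathbb{L}^{n_i}_+)\setminus\{0\}$ for $i\in\bar{\mathcal{R}}\cup\bar{\mathcal{T}}_3$ and $z^i(\epsilon)\in\interior(\mathbb{L}^{n_i}_+)$ for $i\in\bar{\mathcal{N}}$ near $\bar{\epsilon}$), continuity from $\bar{\epsilon}$ (forcing $x^i(\epsilon)\in\interior(\mathbb{L}^{n_i}_+)$ for $i\in\bar{\mathcal{B}}$, $v_i(\epsilon)>0$ for $i\in\bar{\mathcal{R}}$, and $x^i_1(\epsilon)>0$ for $i\in\bar{\mathcal{T}}_2$), and the three identities above, a cone-by-cone check shows $x(\epsilon)\in\mathcal{L}^{\bar{n}}_+$, $s(\epsilon)\in\mathcal{L}^{\bar{n}}_+$, $Ax(\epsilon)=b$, $A^Ty(\epsilon)+s(\epsilon)=c+\epsilon\bar{c}$, and $x(\epsilon)\circ s(\epsilon)=0$; the only nontrivial complementarity block is $i\in\bar{\mathcal{R}}$, where $x^i=2v_iR^iz^i$, $s^i=z^i$, and $x^i\circ s^i=0$ reduces to $(z^i)^TR^iz^i=0$. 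Since $\bar{\epsilon}\in\interior(\mathcal{E})$, Assumption~\ref{IPC} gives strong duality near $\bar{\epsilon}$, so feasibility plus complementarity make $\big(x(\epsilon);y(\epsilon);s(\epsilon)\big)$ optimal for $(\mathrm{P}_{\epsilon})$ and $(\mathrm{D}_{\epsilon})$; by Lemma~\ref{stability_nondegeneracy} the optimal solution is unique near $\bar{\epsilon}$, so this triple \emph{is} it, and reading off its conic membership gives $\mathcal{B}(\epsilon)=\bar{\mathcal{B}}$, $\mathcal{N}(\epsilon)=\bar{\mathcal{N}}$, $\mathcal{R}(\epsilon)=\bar{\mathcal{R}}$ and $\mathcal{T}_j(\epsilon)=\bar{\mathcal{T}}_j$, i.e.\ $\pi(\epsilon)=\bar{\pi}$ near $\bar{\epsilon}$. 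As strict complementarity fails at $\bar{\epsilon}$ (so $\bar{\mathcal{T}}\neq\emptyset$), $\bar{\epsilon}$ is then not a transition point, hence it belongs to a nonlinearity interval.

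The step I expect to be the genuine obstacle is this reconstruction: one must argue that the analytic solution $\chi$ of the \emph{reduced} system $G(\cdot,\epsilon)=0$ — a priori only a stationary point of the nonconvex reformulation $(\mathrm{DN}_{\epsilon})$ built on the frozen partition $\bar{\pi}$, whose global optimality is asserted only at $\bar{\epsilon}$ — actually reassembles into a genuinely feasible and complementary pair for the original conic problems, with every cone sitting in exactly the face prescribed by $\bar{\pi}$. It is precisely here that the vanishing of \emph{all} derivatives (not merely low-order ones) is indispensable: it upgrades the $k$-th order tangency encoded by~\eqref{derivatives_condition} on the $\bar{\mathcal{T}}_1,\bar{\mathcal{T}}_2,\bar{\mathcal{T}}_3$ blocks to the exact identities $u^i\equiv 0$, $(u^i_1)^2-\|u^i_{2:n_i}\|^2\equiv 0$, $v_i\equiv 0$ that the cone-membership argument consumes. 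A related delicate point, handled as above, is to avoid invoking global optimality of $(\mathrm{DN}_{\epsilon})$ for $\epsilon\neq\bar{\epsilon}$ altogether and instead certify optimality inside $(\mathrm{P}_{\epsilon}),(\mathrm{D}_{\epsilon})$ via strong duality, importing uniqueness from Lemma~\ref{stability_nondegeneracy}.
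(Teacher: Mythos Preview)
Your proof is correct and follows essentially the same approach as the paper: both directions rest on the analytic implicit function theorem via Theorem~\ref{Jacobian_nonsingularity}, with necessity coming from Lemma~\ref{analytic_mapping} and sufficiency from upgrading the derivative conditions~\eqref{derivatives_condition} to exact identities by analyticity and then reconstructing an optimal solution carrying the partition $\bar{\pi}$. Your sufficiency argument is in fact more self-contained than the paper's --- you explicitly assemble the primal-dual triple from $\chi$ and verify the KKT conditions cone-by-cone, whereas the paper combines the inclusions~\eqref{partition_inclusion} with a footnote reference to~\cite[Section~3]{MT19a} for that reconstruction step.
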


\begin{proof}
 \begin{enumerate}
\item[$\Rightarrow$] Recall from Lemma~\ref{analytic_mapping} that for the analytic mapping $\chi(\epsilon)$ we have
\begin{equation}\label{u_init}
\begin{aligned}
u^i(\bar{\epsilon}) &= 0,  & \forall &i \in \bar{\mathcal{T}_1},\\[-1\jot]
(u_1^i(\bar{\epsilon}))^2-\|u^i_{2:n_i}(\bar{\epsilon})\|^2  &= 0,  & \forall &i \in \bar{\mathcal{T}_2},\\[-1\jot]
v_i(\bar{\epsilon})  &= 0,  & \forall &i \in \bar{\mathcal{T}_3}.
\end{aligned}
\end{equation}
Assume that $\pi(\epsilon) = \pi(\bar{\epsilon})$ on $(\bar{\epsilon}-\varsigma,\bar{\epsilon}+\varsigma)$, i.e., $\bar{\epsilon}$ is not a transition point, where $\varsigma$ is defined in Lemma~\ref{analytic_mapping}. Then, for every $\epsilon \in (\bar{\epsilon}-\varsigma,\bar{\epsilon}+\varsigma)$ there exists a unique optimal solution $\big(x^*(\epsilon);y^*(\epsilon);s^*(\epsilon)\big)$ such that  
\begin{equation}\label{T_constancy}
\begin{aligned}
x^{*i}(\epsilon)&=0, & \forall i &\in \bar{\mathcal{T}_1},\\[-1\jot]
x^{*i}_1(\epsilon)-\|x^{*i}_{2:n_i}(\epsilon)\|&=0, & \forall i &\in \bar{\mathcal{T}_2},\\[-1\jot]
x^{*i}(\epsilon)&=0, & \forall i &\in \bar{\mathcal{T}_3}.
\end{aligned}
\end{equation}
\noindent
In the sequel, from the equality of $\chi(\epsilon)$ and $\vartheta(\epsilon)$ on $(\bar{\epsilon}-\varsigma,\bar{\epsilon}+\varsigma)$ and~\eqref{T_constancy} we obtain
\begin{equation}\label{u_constancy}
\begin{aligned}
u^i(\epsilon) &= 0,  & \forall &i \in \bar{\mathcal{T}_1},\\[-1\jot]
(u_1^i(\epsilon))^2-\|u^i_{2:n_i}(\epsilon)\|^2  &= 0, & \forall &i \in \bar{\mathcal{T}_2},\\[-1\jot]
v_i(\epsilon)  &= 0, &  \forall &i \in \bar{\mathcal{T}_3}
\end{aligned}
\end{equation}
for every $\epsilon \in (\bar{\epsilon}-\varsigma,\bar{\epsilon}+\varsigma)$, which confirm~\eqref{derivatives_condition}.
\item[$\Leftarrow$] Let all the higher-order derivatives in~\eqref{derivatives_condition} be equal to zero. Then the analyticity of $\chi(\epsilon)$ on $(\bar{\epsilon}-\varrho,\bar{\epsilon}+\varrho)$, where $\varrho$ is defined in Lemma~\ref{analytic_mapping}, and~\eqref{u_init} imply~\eqref{u_constancy} for every $\epsilon \in (\bar{\epsilon}-\varrho,\bar{\epsilon}+\varrho)$, see~\cite[Corollary~1.2.5]{KP02}. Therefore, if $\epsilon \in (\bar{\epsilon}-\varrho,\bar{\epsilon}+\varrho)$ is sufficiently close to $\bar{\epsilon}$, then by~\eqref{optimal_condition_nlo_soco},~\eqref{u_constancy}, and the continuity of $\chi(\epsilon)$ at $\bar{\epsilon}$ there exists%
\footnote{In fact, using~\eqref{optimal_condition_nlo_soco},~\eqref{partition_inclusion}, and~\eqref{u_constancy} we can generate an optimal solution $\big(x^*(\epsilon);y^*(\epsilon);s^*(\epsilon)\big)$ for $(\mathrm{P}_{\epsilon})$ and $(\mathrm{D}_{\epsilon})$, see~\cite[Section~3]{MT19a}, which then proves to be unique for every $\epsilon$ sufficiently close to $\bar{\epsilon}$.} a unique optimal solution $\big(x^*(\epsilon);y^*(\epsilon);s^*(\epsilon)\big)$ such that~\eqref{partition_inclusion} and~\eqref{T_constancy} hold, and thus $\pi(\epsilon) = \pi(\bar{\epsilon})$.  \qed
\end{enumerate}
\end{proof}
Under the primal and dual nondegeneracy conditions, Theorem~\ref{transition_point_identification} provides a complete characterization, in terms of higher-order derivatives, for the identification of a transition point. The higher-order derivatives of $\chi(\epsilon)$, as defined in the proof of Lemma~\ref{analytic_mapping}, can be computed by
\begin{equation}\label{higher_order_derivatives}
\begin{aligned}
\big(\chi(\epsilon)\big)'|_{\epsilon = \bar{\epsilon}}&\!:=\!\nabla G^{-1}\big(\chi(\bar{\epsilon}),\bar{\epsilon}\big)
 \begin{pmatrix} 0 \\ \bar{c}^i & \ \ i &\in \bar{\mathcal{B}} \cup \bar{\mathcal{T}_1} \cup \bar{\mathcal{T}_2}\\ \bar{c}^i & \ \ i &\in \bar{\mathcal{R}} \cup \bar{\mathcal{N}} \cup \bar{\mathcal{T}_3}\\ 0\end{pmatrix}, \\[-1\jot] 
\big(\chi(\epsilon)\big)^{(k)}|_{\epsilon = \bar{\epsilon}}&\!:=\!\nabla G^{-1}\big(\chi(\bar{\epsilon}),\bar{\epsilon}\big) \eta_k(\bar{\epsilon}), \qquad k> 1,
\end{aligned}
\end{equation}
where
\begin{align*}
\eta_k(\bar{\epsilon})\!:=\!\begingroup 
\setlength\arraycolsep{1pt}
\begin{pmatrix} 0\\ 2\sum_{j=1}^{k-1} \displaystyle{k \choose j} \big(v_i(\epsilon)\big)^{(j)}|_{\epsilon = \bar{\epsilon}} R^i \big(z^i(\epsilon)\big)^{(k-j)}|_{\epsilon=\bar{\epsilon}} & i \in \bar{\mathcal{R}} \\ 0 \\ 2\sum_{j=1}^{k-1} \displaystyle{k \choose j}\big(v_i(\epsilon)\big)^{(j)}|_{\epsilon=\bar{\epsilon}} R^i \big(z^i(\epsilon)\big)^{(k-j)}|_{\epsilon = \bar{\epsilon}} & i \in \bar{\mathcal{T}_3} \\ 0 \\ -\sum_{j=1}^{k-1}\displaystyle{k \choose j}\Big(\big(z^i(\epsilon)\big)^{(j)}|_{\epsilon=\bar{\epsilon}}\Big)^TR^i \big(z^i(\epsilon)\big)^{(k-j)}|_{\epsilon=\bar{\epsilon}} & \ \ \ \ \ \ i \in \bar{\mathcal{R}} \cup \bar{\mathcal{T}_3}\end{pmatrix}. \endgroup
\end{align*}

\begin{remark}\label{SDO_SOCO_transition_alternative}
Alternatively, the numerical procedure proposed in~\cite{HMTT19} for parametric SDO can be specialized for $(\mathrm{P}_{\epsilon})-(\mathrm{D}_{\epsilon})$, or it can be directly applied to a SDO reformulation of $(\mathrm{P}_{\epsilon})-(\mathrm{D}_{\epsilon})$, see Section~\ref{conclusion}. Under a nonsingularity condition, which at least requires strict complementarity and nondegeneracy conditions simultaneously, the procedure in~\cite{HMTT19} sequentially solves an ordinary differential equation, arising from the optimality conditions, along a mesh pattern. By contrast, Algorithm~\ref{alg:nonlinearity_computation} and the procedure from Theorem~\ref{transition_point_identification} are more practical in a sense that they rely on weaker assumptions, i.e., strict complementarity or nondegeneracy conditions only, and no need to choose any mesh size.
\end{remark}
\section{Numerical results}\label{numerical_results}
In this section, we numerically evaluate the convergence of the boundaries generated by Algorithm~\ref{alg:nonlinearity_computation}, and the magnitude of the derivatives introduced in Section~\ref{nonlinearity_interval_strict_fails}. For the simplicity of computation, we invoke Algorithm~\ref{alg:nonlinearity_computation} without the connectivity subroutine. We will show that on the given parametric SOCO problems, a subinterval of a nonlinearity interval is properly generated without a need for the quantifier elimination algorithm.

\vspace{5px}
\noindent
We call the SQP algorithm included in the ``fmincon" solver of MATLAB to solve the auxiliary problems in~\eqref{auxiliary_nonlinear_lower}, and we employ the CVX optimization package~\cite{gb08,CVX} to solve the SOCO problems $(\mathrm{P}_{\epsilon})$ and $(\mathrm{D}_{\epsilon})$. The outer loops of Algorithm~\ref{alg:nonlinearity_computation} continue as long as $|\alpha_k - \alpha_{k-1}| > 10^{-7}$ and $|\beta_k - \beta_{k-1}| > 10^{-7}$ hold. Furthermore, in order to accurately compute the higher-order derivatives of the Lagrange multipliers at $\bar{\epsilon}$, we first round the near zero solutions obtained from CVX according to the optimal partition at $\bar{\epsilon}$. We then take a Newton step to solve $G\big((w;z;u;v),\bar{\epsilon}\big)=0$ and thus correct the resulting errors, see~\cite[Section~3.1]{MT19a}. All the codes are run in MATLAB~9.7 environment on a MacBook Pro with Intel Core i5 CPU $@$ 2.3 GHz and 8GB of RAM.
\setlength{\abovedisplayskip}{0pt}
\setlength{\belowdisplayskip}{0pt}
\subsection{Computation of a nonlinearity interval}
We apply Algorithm~\ref{alg:nonlinearity_computation} to the parametric SOCO problems~\eqref{ex:introductory_SOCO} and~\eqref{optim_value_analytic} for the computation of a nonlinearity interval. Additionally, we consider solving the following parametric SOCO problem for which the dual nondegeneracy condition fails on nonlinearity intervals:
\begin{equation}\label{ex:dual_non_fails_SOCO}
\begin{aligned}
\min \ \ -\epsilon x^1_2 - (1-\epsilon) x^1_3\\[-1\jot]
\st \qquad\qquad\qquad x^1_1 &= 1,\\[-1\jot]
x^2_1 + x_1^3 &= 2,\\[-1\jot]
x^1_2 - x^3_2 &= 0,\\[-1\jot]
x^1_3-x^3_3 &= -1,\\[-1\jot]
x_1^1 &\ge \sqrt{(x^1_2)^2 + (x^1_3)^2},\\[-1\jot]
x^2_1 &\ge 0,\\[-1\jot]
x_1^3 &\ge \sqrt{(x^3_2)^2 + (x^3_3)^2},
\end{aligned}
\end{equation}
for which $(-\infty, 0)$ and $(0,\infty)$ are the nonlinearity intervals and $\epsilon = 0$ is a transition point. 

\vspace{5px}
\noindent
For the parametric SOCO problem~\eqref{ex:introductory_SOCO}, a one-time application of the auxiliary problems~\eqref{auxiliary_nonlinear_lower} at $\epsilon = \frac12$ yields $[0.4066,0.5934]$ as a subinterval of the nonlinearity interval containing $\epsilon = \frac12$. By invoking Algorithm~\ref{alg:nonlinearity_computation}, we get the boundary points of the nonlinearity interval, up to our desired precision, in 39 iterations. The numerical results are demonstrated in Tables~\ref{non_interval3_lower} and~\ref{non_interval3_upper}, where "Optim." and "Viol." denote the optimality and feasibility of solutions of~\eqref{auxiliary_nonlinear_lower}, and $\sigma_{\min}(.)$ is the minimum singular value. One can observe from the entries of Tables~\ref{non_interval3_lower} and~\ref{non_interval3_upper} that $\alpha_k$ and $\beta_k$ always remain within $(0,1)$, even without the connectivity subroutine, and converge to 0 and 1 at almost linear rate. Notice that the continuity of $\mathcal{P}^*(\epsilon)$ and $\mathcal{D}^*(\epsilon)$ at $\epsilon = 0$ and $\epsilon = 1$ leads to accurate approximations of the transition points.
\begingroup
\setlength{\tabcolsep}{5pt} 
\renewcommand{\arraystretch}{.85}
\begin{table}[]
\centering
\small
\caption{The convergence of $\alpha_k$ for the parametric SOCO problem~\eqref{ex:introductory_SOCO}.}
\label{non_interval3_lower}
\begin{tabular}{ccccccc}
\hline
$k$ & $\alpha_k$      & Optim.    & Viol.     & $\delta(\alpha_k)$    & $\sigma_{\min}(\nabla F)$      & $|\alpha_k-\hat{\alpha}|$          \\
\hline
0    & 0.5      &          &          & 2.93E-01 & 1.69E-01 &          \\
1    & 0.406585	&1.65E-08  &2.78E-17 &2.76E-01 &1.59E-01	&4.07E-01 \\ 	
2    & 0.315311	&4.44E-16 &1.11E-16 &2.31E-01 &1.35E-01 &3.15E-01 \\
3    & 0.234601	&2.33E-08 &7.65E-15 &1.76E-01 &1.05E-01 &2.35E-01 \\
4    & 0.169844	&4.16E-16 &1.11E-16 &1.27E-01 &7.72E-02 &1.70E-01 \\
5    & 0.121280	&1.11E-16	&1.11E-16	&9.00E-02	&5.53E-02	&1.21E-01 \\
\hline
     &          &          &          &          &          &          \\
 \hline
35   &0.000004	        &4.31E-12	        &4.31E-12	        &2.88E-06	       &1.82E-06	        &4.06E-06\\
36   & 0.000003	&4.90E-12	&4.90E-12	&1.83E-06	&1.16E-06	&2.58E-06 \\
37   & 0.000002	&3.67E-12	&3.67E-12	&1.06E-06	&6.70E-07	&1.50E-06 \\
38   & 0.000001	&1.77E-12	&1.77E-12	&5.64E-07	&3.57E-07	&8.07E-07 \\
39   & 0.000000	&2.50E-11	&2.60E-12	&7.51E-08	&5.06E-08	&1.06E-07 \\
\hline
\end{tabular}
\end{table}
\endgroup

\begingroup
\setlength{\tabcolsep}{5pt} 
\renewcommand{\arraystretch}{.85}
\begin{table}[]
\centering
\small
\caption{The convergence of $\beta_k$ for the parametric SOCO problem~\eqref{ex:introductory_SOCO}.}
\label{non_interval3_upper}
\begin{tabular}{ccccccc}
\hline
$k$ & $\beta_k$      & Optim.    & Viol.     & $\delta(\beta_k)$    & $\sigma_{\min}(\nabla F)$      & $|\beta_k-\hat{\beta}|$         \\
\hline
0    & 0.5      &          &          & 2.93E-01 & 1.69E-01 &          \\
1    & 0.593415	&1.65E-08	&2.22E-16	&2.76E-01	&1.59E-01	&4.07E-01 \\
2    & 0.684689	&2.22E-16	&1.39E-17	&2.31E-01	&1.34E-01	&3.15E-01 \\
3    & 0.765399	&4.43E-15	&5.68E-15	&1.76E-01	&1.04E-01	&2.35E-01 \\
4    & 0.830156	&2.22E-16	&2.78E-17	&1.27E-01	&7.67E-02	&1.70E-01 \\
5    & 0.878720	&2.29E-16	&4.88E-17	&9.00E-02	&5.51E-02	&1.21E-01 \\
\hline
     &          &          &          &          &          &          \\
 \hline
35  &0.999996	        &3.35E-13	        &3.35E-13	        &3.01E-06 	&1.91E-06	&4.26E-06\\
36   & 0.999997	&2.02E-12	&2.02E-12	&2.04E-06	&1.29E-06	&2.87E-06 \\
37   & 0.999998	&2.42E-12	&2.42E-12	&1.29E-06	&8.19E-07	&1.83E-06 \\
38   & 0.999999	&4.39E-12	&4.39E-12	&5.68E-07	&3.59E-07	&8.01E-07 \\
39   & 1.000000	&2.78E-12	&2.78E-12	&6.46E-08	&4.09E-08	&7.79E-08 \\
\hline
\end{tabular}
\end{table}
\endgroup

\vspace{5px}
\noindent
We can observe from Tables~\ref{non_interval_upper} and~\ref{non_interval6_lower} that the bounds given by Algorithm~\ref{alg:nonlinearity_computation} always stay within the corresponding nonlinearity interval, without a need for the connectivity subroutine. However, the convergence of $\beta_k$ and $\alpha_k$ to their limit points becomes slow resulting in a large number of iterations. For instance, the sequence of $\beta_k$ in Table~\ref{non_interval_upper} progresses rapidly on the nonlinearity interval $(-\infty,\frac12)$, where both the strict complementarity and nondegeneracy conditions hold. However, the convergence becomes slower than linear as $\beta_k$ approaches $\frac12$. The slow convergence in the vicinity of $\frac12$ can be partly explained by the discontinuity of the dual optimal set mapping at $\epsilon = \frac12$, at which the primal nondegeneracy condition fails. More precisely, both $\mathcal{P}^*(\epsilon)$ and $\mathcal{D}^*(\epsilon)$ are single-valued and thus continuous on the intervals $(0,\frac12)$ and $(\frac12,1)$, while for any sequence $\epsilon_k \to \frac12$ we have $\liminf_{k \to \infty} \mathcal{D}^*(\epsilon_k) \cap \ri\big(\mathcal{D}^*(\frac12)\big) = \emptyset$. Analogously, the sequence of $\alpha_k$ in Table~\ref{non_interval6_lower} converges slowly to 0, at which the primal nondegeneracy condition fails and thus the dual optimal set mapping fails to be continuous. In this case, compared to Table~\ref{non_interval_upper}, the convergence is slower, which is partially due to the failure of the dual nondegeneracy condition on $(0,\infty)$.

\begingroup
\setlength{\tabcolsep}{5pt} 
\renewcommand{\arraystretch}{.85}
\begin{table}[]
\centering
\small
\caption{The convergence of $\beta_k$ for the parametric SOCO problem~\eqref{optim_value_analytic}.}
\label{non_interval_upper}
\begin{tabular}{lllllll}
\hline
$k$ & $\beta_k$      & Optim.    & Viol.     & $\delta(\beta_k)$    & $\sigma_{\min}(\nabla F)$      &   $|\beta_k-\hat{\beta}|$       \\
\hline
0    & 0.25     &          &          & 1.10E-01 & 5.49E-02 &          \\
1    & 0.281514	&3.75E-16	&1.11E-16	&8.78E-02	&4.38E-02	&2.18E-01 \\
2    & 0.306559	&3.89E-16	&2.22E-16	&7.08E-02	&3.54E-02	&1.93E-01 \\
3    & 0.326779	&1.14E-13	&1.14E-13	&5.80E-02	&2.91E-02	&1.73E-01 \\
4    & 0.343358	&4.44E-16	&3.47E-18	&4.82E-02	&2.42E-02	&1.57E-01 \\
5    & 0.357148	&4.44E-16	&6.73E-25	&4.06E-02	&2.04E-02	&1.43E-01 \\
\hline
     &          &          &          &          &          &          \\
 \hline
196  & 0.492014	&4.40E-13	&4.40E-13	&1.35E-04	&6.76E-05	&7.99E-03 \\
197  & 0.492053	&4.86E-13	&4.86E-13	&1.34E-04	&6.70E-05	&7.95E-03 \\
198  & 0.492092	&5.36E-13	&5.36E-13	&1.33E-04	&6.63E-05	&7.91E-03 \\
199  & 0.492130	&5.91E-13	&5.91E-13	&1.31E-04	&6.57E-05	&7.87E-03 \\
200  & 0.492168	&6.49E-13	&6.49E-13	&1.30E-04	&6.50E-05	&7.83E-03 \\
\hline
\end{tabular}
\end{table}
\endgroup

\begingroup
\setlength{\tabcolsep}{5pt} 
\renewcommand{\arraystretch}{.85}
\begin{table}[]
\centering
\small
\caption{The convergence of $\alpha_k$ for the parametric SOCO problem~\eqref{ex:dual_non_fails_SOCO}.}
\label{non_interval6_lower}
\begin{tabular}{lllllll}
\hline
$k$ & $\alpha_k$      & Optim.    & Viol.     & $\delta(\alpha_k)$    & $\sigma_{\min}(\nabla F)$      & $|\alpha_k-\hat{\alpha}|$          \\
\hline
0    & 0.5      &          &          & 5.20E-02 & 5.12E-15 &          \\
1    & 0.483539	&4.58E-09	&1.11E-16	&4.58E-02	&3.27E-15	&4.84E-01 \\
2    & 0.469025	&4.47E-09	&1.32E-17	&4.51E-02	&4.65E-15	&4.69E-01 \\
3    & 0.454698	&4.48E-09	&1.19E-17	&4.18E-02	&4.99E-15	&4.55E-01 \\
4    & 0.441388	&4.41E-09	&5.87E-17	&3.87E-02	&5.99E-15	&4.41E-01 \\
5    & 0.429023	&4.37E-09	&5.55E-17	&3.44E-02	&7.40E-15	&4.29E-01 \\
\hline
     &          &          &          &          &          &          \\
\hline
196  & 0.090842	&2.94E-04	&4.26E-17	&8.21E-04	&2.33E-13	&9.08E-02 \\
197  &0.090520	&2.75E-04	&1.11E-16	&8.05E-04	&1.22E-12	&9.05E-02 \\
198  & 0.090203	&9.07E-06	&1.11E-16	&7.95E-04	&1.54E-12	&9.02E-02 \\
199  & 0.089891	&7.99E-06	&1.11E-16	&8.04E-04	&2.51E-13	&8.99E-02 \\
200  & 0.089574	&3.94E-01	&2.93E-17	&7.97E-04	&2.16E-13	&8.96E-02\\
\hline
\end{tabular}
\end{table}
\endgroup

\subsection{Identification of a transition point}\label{transition_point_experiments}
In order to illustrate the identification of a transition point, we apply Theorem~\ref{transition_point_identification} to the following parametric SOCO problem:
\begin{equation}\label{SOCO_R_T3}
\begin{aligned}
\min \quad  &x^1_1 + (1-\epsilon) x^1_2\\[-1\jot]
\st \quad& x^1_2 + x_2^2 = 0,\\[-1\jot]
&x^1_3+x^2_1 = -1,\\[-1\jot]
&x^1_1 \ge \sqrt{(x^1_2)^2 + (x^1_3)^2},\\[-1\jot]
&x^2_1 \ge |x^2_2|,
\end{aligned}
\end{equation}
which has a singleton invariancy set at $\epsilon = 0$. The optimal partition at $\epsilon = 0$ is given by $\pi(0)=\big\{\emptyset,\emptyset,\{1\},\{\emptyset,\emptyset,\{2\}\}\big\}$, which indeed implies the failure of the strict complementarity condition at $\epsilon = 0$. However, one can observe that both the primal and dual nondegeneracy conditions hold at $\epsilon = 0$, and thus Theorem~\ref{transition_point_identification} is applicable. By computing the first-order derivative $\big(v(\epsilon)\big)'|_{\epsilon = 0}=-\frac12$ using the formulas in~\eqref{higher_order_derivatives}, we can conclude from Theorem~\ref{transition_point_identification} that $\epsilon = 0$ is a transition point. This transition point is adjacent to a nonlinearity interval $(1-\sqrt{2},0)$ with unique
\begin{align*}
x^{*1}(\epsilon)=\begin{pmatrix} \frac{1}{\sqrt{2-(\epsilon-1)^2}}\\ \frac12 - \frac{1-\epsilon}{2\sqrt{2-(\epsilon-1)^2}}\\ - \frac12 - \frac{1-\epsilon}{2\sqrt{2-(\epsilon-1)^2}} \end{pmatrix}, \qquad s^{*1}(\epsilon)=\begin{pmatrix} 1 \\ \frac{1-\epsilon}{2}-\frac{\sqrt{2-(\epsilon-1)^2}}{2}\\ \frac{1-\epsilon}{2}+\frac{\sqrt{2-(\epsilon-1)^2}}{2} \end{pmatrix}.
\end{align*}
\noindent
Algorithm~\ref{alg:nonlinearity_computation} computes a lower bound $-0.4135$ for this nonlinearity interval, without jumping over the irrational point $1-\sqrt{2}$%
\footnote{Interestingly, the duality gap is zero at the boundary point $\epsilon = 1-\sqrt{2}$, with the finite optimal value $1/\sqrt{2}$. However, the optimal value of~\eqref{SOCO_R_T3} is not attained, and its dual fails to have a strictly feasible solution.}.

\vspace{5px}
\noindent
If we change the objective function of~\eqref{SOCO_R_T3} to $x^1_1 + (1-\epsilon) x^1_2+\epsilon x^1_3$, then we obtain an invariancy interval $(-\infty, 1)$ around $\epsilon = 0$, where
\begin{align*}
\pi(\epsilon)=\big\{\emptyset,\emptyset,\{1\},\{\emptyset,\emptyset,\{2\}\}\big\}, \qquad \forall \epsilon \in (-\infty, 1),
\end{align*}
and both the primal and dual nondegeneracy conditions hold. The higher-order derivatives of $v(\epsilon)$ at $\epsilon = 0$ are given in Table~\ref{tab:higher_order_derivatives}, which stay reliably close to 0 up to the $10^{\mathrm{th}}$-order derivative. The entries of Table~\ref{tab:higher_order_derivatives} also demonstrate the computation error when solving~\eqref{higher_order_derivatives}, which iteratively propagates as the order of derivative increases. 
\begin{table}[h!]
\centering
\small
\caption{The higher-order derivatives of $v(\epsilon)$ at $\epsilon = 0$.}
\label{tab:higher_order_derivatives}
\resizebox{\columnwidth}{!}{\begin{tabular}{ccccccccccc}
\hline
 $k$ & 1      & 2    & 3     & 4    & 5      & 6   & 7 & 8 & 9 & 10   \\
\hline
$\big(v(\epsilon)\big)^{(k)}|_{\epsilon = 0}$   & -5.6E-17   & -1.1E-16   & -3.3E-16   & -1.3E-15 & -6.7E-15 & -4.0E-14  & -2.8E-13 & -2.2E-12 & -2.0E-11 & -2.0E-10  \\
\hline
\end{tabular}}
\end{table}

\setlength{\abovedisplayskip}{0pt}
\setlength{\belowdisplayskip}{0pt}
\section{Concluding remarks and future research}\label{conclusion}
We generalized the optimal partition approach from a parametric LO to a parametric SOCO problem, where the objective function is perturbed along a fixed direction. We provided sufficient conditions for the existence of a nonlinearity interval and proved that the set of transition points is finite. Under strict complementarity condition, we presented a numerical procedure for the computation of a nonlinearity interval. Furthermore, when the primal and dual nondegeneracy conditions hold, we showed how to identify a transition point from the higher-order derivatives of the Lagrange multipliers associated with $(\mathrm{DN}_{\epsilon})$. The numerical experiments demonstrated that Algorithm~\ref{alg:nonlinearity_computation} and Theorem~\ref{transition_point_identification} can be reliably used to check the existence of a nonlinearity interval and a transition point, respectively.

\paragraph{On the connection to parametric SDO}
It is well-known that optimal values of $(\mathrm{P}_{\epsilon})$ and $(\mathrm{D}_{\epsilon})$ can be obtained by solving a SDO problem~\cite{AG03}. The connection can be established, e.g., by noting the equivalence 
\begin{align*}
s^i \in \mathbb{L}^{n_i}_+ \quad \Longleftrightarrow \quad L(s^i) \in \mathbb{S}^{n_i}_+, 
\end{align*}
which embeds $(\mathrm{P}_{\epsilon})-(\mathrm{D}_{\epsilon})$ into a pair of primal-dual SDO problems, as follows:
\begin{align*}
&(\mathrm{P}'_{\epsilon}) \quad \inf_{X\succeq 0} \Big\{\langle L(c + \epsilon \bar{c}), X\rangle \mid  \langle  \mathrm{diag}\big(L(a^1_j),\ldots, L(a^p_j)\big),X \rangle = b_j, \ j=1,\ldots,m\Big\},\\[-1\jot]
&(\mathrm{D}'_{\epsilon}) \quad  \sup_{\substack{y \in \mathbb{R}^m\\ S\succeq 0}} \bigg\{b^T y \mid  \sum_{j=1}^m \mathrm{diag}\big(L(a^1_j),\ldots,L(a^p_j)\big) y_j+S=L(c + \epsilon \bar{c})\bigg\},
\end{align*}
where $a^i_j$ denotes the $j^{\mathrm{th}}$ row of matrix $A^i$ for $i=1,\ldots,p$, $X,S \in \mathbb{S}^{\bar{n}}$, $\langle X,S \rangle:=\trace(XS)$ is defined as the trace of $XS$, and $\succeq$ means positive semidefinite. By~\cite[Lemma~1 and Theorem~1]{SZ05}, both $(\mathrm{P}'_{\epsilon})$ and $(\mathrm{D}'_{\epsilon})$ satisfy the interior point condition at every $\epsilon \in \interior(\mathcal{E})$, and thus they both have nonempty optimal sets on $\interior(\mathcal{E})$. Let $\mathcal{P'}^*(\epsilon)$ and $\mathcal{D'}^*(\epsilon)$ denote the primal and dual optimal sets of $(\mathrm{P}'_{\epsilon})$ and $(\mathrm{D}'_{\epsilon})$, respectively. Since the dual formulation $(\mathrm{D}'_{\epsilon})$ preserves a block diagonal arrow-head structure for $S$, we have $\big(y(\epsilon);s(\epsilon)\big) \in \mathcal{D}^*(\epsilon)$ if and only if $\big(y(\epsilon),L\big(s(\epsilon)\big)\big) \in \mathcal{D'}^{*}(\epsilon)$. Nevertheless, an optimal solution of $(\mathrm{P}'_{\epsilon})$ could be still a fully dense matrix.

\vspace{5px}
\noindent
Unlike the optimal partition of $(\mathrm{P}_{\epsilon})-(\mathrm{D}_{\epsilon})$ at a fixed $\epsilon$, which is specified by the finite collection $\big\{\mathcal{B}(\epsilon),\mathcal{N}(\epsilon),\mathcal{R}(\epsilon),\mathcal{T}(\epsilon)\big\}$, the optimal partition of $(\mathrm{P}'_{\epsilon})$ and $(\mathrm{D}'_{\epsilon})$, see e.g.,~\cite{MT19}, is specified by a 3-tuple $\big\{\mathcal{B'}(\epsilon),\mathcal{N'}(\epsilon),\mathcal{T'}(\epsilon)\big\}$ of mutually orthogonal subspaces of $\mathbb{R}^{\bar{n}}$. Let $\big(X^*(\epsilon),y^*(\epsilon),S^*(\epsilon)\big)$ be a maximally complementary optimal solution%
\footnote{As defined in~\cite[Definition~1.1]{MT20}, $\big(X^*(\epsilon),y^*(\epsilon),S^*(\epsilon)\big)$ is a maximally complementary optimal solution of $(\mathrm{P}'_{\epsilon})$ and $(\mathrm{D}'_{\epsilon})$ if $\rank\!\big(X^*(\epsilon)\big) + \rank\!\big(S^*(\epsilon)\big)$ is maximal on $\mathcal{P'}^*(\epsilon) \times \mathcal{D'}^*(\epsilon)$.} of $(\mathrm{P}'_{\epsilon})$ and $(\mathrm{D}'_{\epsilon})$. Then 
\begin{align*}
\mathcal{B'}(\epsilon)\!:=\col\!\big(X^*(\epsilon)\big), \quad \mathcal{N'}(\epsilon)\!:=\col\!\big(S^*(\epsilon)\big), \quad \mathcal{T'}(\epsilon)\!:=\big(\mathcal{B'}(\epsilon) +\mathcal{N'}(\epsilon)\big)^{\perp},
\end{align*}
where $\perp$ denotes the orthogonal complement of a subspace, and $\col(.)$ represents the column space of a matrix. Since $S^*(\epsilon)$ is always block diagonal and corresponds to an optimal solution $\big(y^*(\epsilon);s^*(\epsilon)\big) \in \mathcal{D}^*(\epsilon)$, one can identify $\mathcal{N}(\epsilon)$ from $S^*(\epsilon)$ by collecting the indices of positive definite blocks. Nevertheless, $X^*(\epsilon)$ might be a fully dense matrix and its eigenstructure may provide no information about $\mathcal{B}(\epsilon)$, $\mathcal{R}(\epsilon)$, and $\mathcal{T}(\epsilon)$.

\paragraph{On the parametric analysis of $(\mathrm{P}'_{\epsilon})$ and $(\mathrm{D}'_{\epsilon})$}
In view of Remarks~\ref{SDO_SOCO_nonlinearity_alternative} and~\ref{SDO_SOCO_transition_alternative}, the parametric analysis of $(\mathrm{P}'_{\epsilon})$ and $(\mathrm{D}'_{\epsilon})$ may be postulated as an alternative to our procedure. However, due to the following reasons, this speculation is not necessarily true:
\begin{itemize}
\item This alternative parametric analysis is at the expense of increasing computational complexity. More specifically, the worst-case iteration complexity of IPMs for an approximate solution of $(\mathrm{P}_{\epsilon})-(\mathrm{D}_{\epsilon})$ is polynomial in terms of $p$, whereas this bound is dependent on $\bar{n} \gg p$, when an IPM is applied to $(\mathrm{P}'_{\epsilon})$ and $(\mathrm{D}'_{\epsilon})$, see also~\cite{AG03,NN94}.

\item Besides computational considerations, the exact identification of the optimal partition is still an issue for $(\mathrm{P}'_{\epsilon})-(\mathrm{D}'_{\epsilon})$. More precisely, computing the boundary points of an invariancy interval for $(\mathrm{P}'_{\epsilon})-(\mathrm{D}'_{\epsilon})$, see~\cite[Lemma~3.4]{MT20}, or a hypothetical extension of our numerical procedure in Section~\ref{nonlinearity_interval_strict_fails} requires the knowledge of the exact optimal partition of $(\mathrm{P}'_{\epsilon})-(\mathrm{D}'_{\epsilon})$. However, only an approximation of the optimal partition is available for $(\mathrm{P}'_{\epsilon})-(\mathrm{D}'_{\epsilon})$, see~\cite{MT19}, while the exact optimal partition of $(\mathrm{P}_{\epsilon})-(\mathrm{D}_{\epsilon})$ can be identified from a trajectory of interior solutions generated by a primal-dual IPM~\cite{TW14}.
\item Even with the exact optimal partition of $(\mathrm{P}'_{\epsilon})-(\mathrm{D}'_{\epsilon})$, the optimal partition of $(\mathrm{P}_{\epsilon})-(\mathrm{D}_{\epsilon})$ cannot be immediately recovered as indicated above.
\end{itemize}

\paragraph{Extension}
While a one-dimensional perturbation setting is of practical interest, our notions of invariancy and nonlinearity intervals can be modified for SOCO problems with other kind of perturbations, e.g., simultaneous perturbation of the objective and right hand side vectors or higher-dimensional perturbations, see e.g.,~\cite{GGT08}. Under these conditions, we need a nontrivial extension of Algorithm~\ref{alg:nonlinearity_computation} for the computation of a nonlinearity region.

\paragraph{Future research}
Algorithm~\ref{alg:nonlinearity_computation} generates sequences of $\alpha_k$ and $\beta_k$, each may converge to a transition point. It is worth investigating the properties of this algorithmic map and the rate at which $\alpha_k$ or $\beta_k$ converges to a transition point. It is also insightful to seek an extension of Algorithm~\ref{alg:nonlinearity_computation} for $(\mathrm{P}_{\epsilon})-(\mathrm{D}_{\epsilon})$, when the strict complementarity condition fails.

\begin{acknowledgements}
We would like to express our gratitude to the anonymous referees for their insightful comments and suggestions. We are indebted to Professor Saugata Basu for bringing the semi-algebraic properties of transition points to our attention. 
\end{acknowledgements}


\bibliographystyle{spmpsci}
\bibliography{mybibfile}

\begin{thebibliography}{10}
\providecommand{\url}[1]{{#1}}
\providecommand{\urlprefix}{URL }
\expandafter\ifx\csname urlstyle\endcsname\relax
  \providecommand{\doi}[1]{DOI~\discretionary{}{}{}#1}\else
  \providecommand{\doi}{DOI~\discretionary{}{}{}\begingroup
  \urlstyle{rm}\Url}\fi

\bibitem{AM92}
Adler, I., Monteiro, R.D.C.: A geometric view of parametric linear programming.
\newblock Algorithmica \textbf{8}(1), 161--176 (1992)

\bibitem{AG03}
Alizadeh, F., Goldfarb, D.: Second-order cone programming.
\newblock Mathematical Programming \textbf{95}(1), 3--51 (2003)

\bibitem{ART03}
Andersen, E., Roos, C., Terlaky, T.: On implementing a primal-dual
  interior-point method for conic quadratic optimization.
\newblock Mathematical Programming \textbf{95}(2), 249--277 (2003)

\bibitem{BPR06}
Basu, S., Pollack, R., Roy, M.F.: Algorithms in Real Algebraic Geometry.
\newblock Springer, New York, NY, USA (2006)

\bibitem{B97}
Berge, C.: Topological Spaces: Including a Treatment of Multi-valued Functions,
  Vector Spaces, and Convexity.
\newblock Dover Publications, Mineola, New York, NY, USA (1997)

\bibitem{BJRT96}
Berkelaar, A.B., Jansen, B., Roos, C., Terlaky, T.: Sensitivity analysis in
  (degenerate) quadratic programming.
\newblock Tech. Rep. 96-26, Delft University of Technology, Netherlands (1996)

\bibitem{B09}
Bertsekas, D.: Convex Optimization Theory.
\newblock Athena Scientific, Nashua, NH, USA (2009)

\bibitem{BR2005}
Bonnans, J.F., Ram{\'\i}rez~C., H.: Perturbation analysis of second-order cone
  programming problems.
\newblock Mathematical Programming \textbf{104}(2), 205--227 (2005)

\bibitem{BS00}
Bonnans, J.F., Shapiro, A.: Perturbation Analysis of Optimization Problems.
\newblock Springer, New York, NY, USA (2000)

\bibitem{CSW13}
Cheung, Y.L., Schurr, S., Wolkowicz, H.: Preprocessing and regularization for
  degenerate semidefinite programs.
\newblock In: D.H. Bailey, H.H. Bauschke, P.~Borwein, F.~Garvan, M.~Th{\'e}ra,
  J.D. Vanderwerff, H.~Wolkowicz (eds.) Computational and Analytical
  Mathematics, pp. 251--303. Springer, New York, NY, USA (2013)

\bibitem{D60}
Dieudonn\'e, J.: Foundations of Modern Analysis.
\newblock Academic Press, Inc., New York, NY, USA (1960)

\bibitem{Fi76}
Fiacco, A.V.: Sensitivity analysis for nonlinear programming using penalty
  methods.
\newblock Mathematical Programming \textbf{10}(1), 287--311 (1976)

\bibitem{Fi83}
Fiacco, A.V.: Introduction to Sensitivity and Stability Analysis in Nonlinear
  Programming.
\newblock Academic Press, Inc., New York, NY, USA (1983)

\bibitem{FM90}
Fiacco, A.V., McCormick, G.P.: Nonlinear Programming: Sequential Unconstrained
  Minimization Techniques.
\newblock Society for Industrial and Applied Mathematics, Philadelphia, PA, USA
  (1990)

\bibitem{GGT08}
Ghaffari-Hadigheh, A., Ghaffari-Hadigheh, H., Terlaky, T.: Bi-parametric
  optimal partition invariancy sensitivity analysis in linear optimization.
\newblock Central European Journal of Operations Research \textbf{16}(2),
  215--238 (2008)

\bibitem{GS99}
Goldfarb, D., Scheinberg, K.: On parametric semidefinite programming.
\newblock Applied Numerical Mathematics \textbf{29}(3), 361 -- 377 (1999)

\bibitem{GT56}
Goldman, A.J., Tucker, A.W.: Theory of linear programming.
\newblock In: H.W. Kuhn, A.W. Tucker (eds.) Linear Equalities and Related
  Systems, pp. 53--97. Princeton University Press, Princeton, NJ, USA (1956)

\bibitem{gb08}
Grant, M., Boyd, S.: Graph implementations for nonsmooth convex programs.
\newblock In: V.~Blondel, S.~Boyd, H.~Kimura (eds.) Recent Advances in Learning
  and Control, Lecture Notes in Control and Information Sciences, pp. 95--110.
  Springer-Verlag Limited (2008)

\bibitem{CVX}
Grant, M., Boyd, S.: {CVX}: {MATLAB} software for disciplined convex
  programming, version 2.1.
\newblock \url{http://cvxr.com/cvx} (2014)

\bibitem{G94}
Greenberg, H.J.: The use of the optimal partition in a linear programming
  solution for postoptimal analysis.
\newblock Operations Research Letters \textbf{15}(4), 179 -- 185 (1994)

\bibitem{HMS20}
Hang, N.T.V., Mordukhovich, B.S., Sarabi, M.E.: Second-order variational
  analysis in second-order cone programming.
\newblock Mathematical Programming \textbf{180}(1), 75--116 (2020)

\bibitem{HMTT19}
Hauenstein, J.D., Mohammad-Nezhad, A., Tang, T., Terlaky, T.: On computing the
  nonlinearity interval in parametric semidefinite optimization (2019).
\newblock ArXiv:1908.10499 \url{https://arxiv.org/abs/1908.10499}

\bibitem{Ho73b}
Hogan, W.W.: Point-to-set maps in mathematical programming.
\newblock SIAM Review \textbf{15}(3), 591--603 (1973)

\bibitem{JRT92}
Jansen, B., Roos, C., Terlaky, T.: An interior point approach to postoptimal
  and parametric analysis in linear programming.
\newblock Tech. Rep. 92-21, Delft University of Technology, Netherlands (1992)

\bibitem{KP02}
Krantz, S.G., Parks, H.R.: A Primer of Real Analytic Functions.
\newblock Springer, New York, NY, USA (2002)

\bibitem{MT19a}
Mohammad-Nezhad, A., Terlaky, T.: Quadratic convergence to the optimal solution
  of second-order conic optimization without strict complementarity.
\newblock Optimization Methods and Software \textbf{34}(5), 960--990 (2019)

\bibitem{MT19}
Mohammad-Nezhad, A., Terlaky, T.: On the identification of the optimal
  partition for semidefinite optimization.
\newblock INFOR: Information Systems and Operational Research \textbf{58}(2),
  225--263 (2020)

\bibitem{MT20}
Mohammad-Nezhad, A., Terlaky, T.: Parametric analysis of semidefinite
  optimization.
\newblock Optimization \textbf{69}(1), 187--216 (2020)

\bibitem{MOS14}
Mordukhovich, B.S., Outrata, J.I.V., Sarabi, M.E.: Full stability of locally
  optimal solutions in second-order cone programs.
\newblock SIAM Journal on Optimization \textbf{24}(4), 1581--1613 (2014)

\bibitem{NN94}
Nesterov, Y., Nemirovskii, A.: Interior-Point Polynomial Algorithms in Convex
  Programming.
\newblock Society for Industrial and Applied Mathematics, Philadelphia, PA, USA
  (1994)

\bibitem{Ox80}
Oxtoby, J.C.: Measure and Category: A Survey of the Analogies between
  Topological and Measure Spaces.
\newblock Springer, New York, NY, USA (1980)

\bibitem{R82}
Robinson, S.M.: Generalized equations and their solutions, part {II}:
  Applications to nonlinear programming.
\newblock In: M.~Guignard (ed.) Optimality and Stability in Mathematical
  Programming, vol.~19, pp. 200--221. Springer, Berlin, Heidelberg (1982)

\bibitem{RD14}
Rockafellar, R., Dontchev, A.: Implicit Functions and Solution Mappings: A View
  from Variational Analysis.
\newblock Springer, New York, NY, USA (2014)

\bibitem{Rock09}
Rockafellar, R., Wets, R.J.B.: Variational Analysis, vol. 317.
\newblock Springer, New York, NY, USA (2009)

\bibitem{RTV05}
Roos, C., Terlaky, T., Vial, J.P.: Interior Point Methods for Linear
  Optimization.
\newblock Springer, New York, NY, USA (2005)

\bibitem{SW20}
Sekiguchi, Y., Waki, H.: Perturbation analysis of singular semidefinite
  programs and its applications to control problems.
\newblock Journal of Optimization Theory and Applications \textbf{188}(1),
  52--72 (2021)

\bibitem{S97}
Shapiro, A.: First and second order analysis of nonlinear semidefinite
  programs.
\newblock Mathematical Programming \textbf{77}(1), 301--320 (1997)

\bibitem{SZ05}
Sim, C.K., Zhao, G.: A note on treating a second order cone program as a
  special case of a semidefinite program.
\newblock Mathematical Programming \textbf{102}(3), 609--613 (2005)

\bibitem{St73}
Stewart, G.W.: Error and perturbation bounds for subspaces associated with
  certain eigenvalue problems.
\newblock SIAM Review \textbf{15}(4), 727--764 (1973)

\bibitem{TW14}
Terlaky, T., Wang, Z.: On the identification of the optimal partition of second
  order cone optimization problems.
\newblock SIAM Journal on Optimization \textbf{24}(1), 385--414 (2014)

\bibitem{T01}
Todd, M.J.: Semidefinite optimization.
\newblock Acta Numerica \textbf{10}, 515--560 (2001)

\bibitem{Yil04}
Yildirim, E.: Unifying optimal partition approach to sensitivity analysis in
  conic optimization.
\newblock Journal of Optimization Theory and Applications \textbf{122}(2),
  405--423 (2004)

\bibitem{ZGB82}
Zlobec, S., Gardner, R., Ben-Israel, A.: Regions of stability for arbitrarily
  perturbed convex programs.
\newblock In: A.V. Fiacco (ed.) Mathematical Programming with Data
  Perturbations {I}, pp. 69--89. M. Dekker, New York, NY, USA (1982)

\end{thebibliography}

\end{document}